\newtheorem{theo}{Theorem}[section]
\newtheorem{theorem}[theo]{Theorem}
\newtheorem{lemma}[theo]{Lemma}
\newtheorem{definition}[theo]{Definition}
\theoremstyle{empty}
\journal{Theoretical Computer Science}
\begin{document}

\begin{frontmatter}

\title{Spectra, hitting times, and resistance distances of $q$-subdivision graphs}

\author[label01,label02]{Yibo Zeng}
\author[label01,label03]{Zhongzhi Zhang}
\ead{zhangzz@fudan.edu.cn}
\address[label01]{Shanghai Key Laboratory of Intelligent Information
Processing, Fudan University, Shanghai 200433, China}
\address[label02]{School of Mathematical Sciences, Fudan
University, Shanghai 200433, China}
\address[label03]{School of Computer Science, Fudan
University, Shanghai 200433, China}

%%%%%%%%%%%%%%%%%%%%%%%%%%%%%%%%%%%%%%%%%%%%
%Subdivision, triangulation, Kronecker product, corona product and many other
\begin{abstract}
Graph operations or products play an important role in complex networks.  In this paper, we study the properties of $q$-subdivision graphs, which have been applied to model complex networks. For a simple connected graph $G$, its $q$-subdivision graph $S_q(G)$ is obtained from $G$ through replacing every edge $uv$ in $G$ by $q$ disjoint paths of length 2, with each path having $u$ and $v$ as its ends. We derive explicit formulas for many quantities of $S_q(G)$ in terms of those corresponding to $G$, including the eigenvalues and eigenvectors of normalized adjacency matrix, two-node hitting time, Kemeny constant,  two-node resistance distance, Kirchhoff index, additive degree-Kirchhoff index, and multiplicative degree-Kirchhoff index. We also study the properties of the iterated $q$-subdivision graphs, based on which we obtain the closed-form expressions for a family of hierarchical lattices, which has been used to describe scale-free fractal networks.
\end{abstract}

\begin{keyword}
Normalized Laplacian spectrum  \sep Subdivision graph \sep Random walk\sep Hitting time \sep Kirchhoff index \sep Effective resistance
\end{keyword}

\end{frontmatter}

%%%%%%%%%%%%%%%%%%%%%%%%%%%%%%%%%%%%%%%%%%%%
%%%%%%%%%%%%%%%%%%%%%%%%%%%%%%%%%%%%%%%%%%%%
%%%%%%%%%%%%%%%%%%%%%%%%%%%%%%%%%%%%%%%%%%%%
%%%%%%%%%%%%%%%%%%%%%%%%%%%%%%%%%%%%%%%%%%%%

\section{Introduction}

As powerful tools of network science, graph operations and products  have been widely used to construct complex networks with the remarkable scale-free~\cite{BaAl99}, small-world~\cite{WaSt98}, and fractal~\cite{SoHaMa05} characteristics observed in realistic networks~\cite{Ne03}. A clear advantage for generating complex networks by graph operations and products lies in the allowance of rigorous analysis for  structural and dynamical properties of the resulting networks. In addition, various real massive networks comprise of smaller pieces, such as communities~\cite{GiNe02}, motifs~\cite{MiShItKaKhAl02}, and cliques~\cite{Ts15}. Graph operations and products represent a natural way to create a huge graph out of small ones. Due to the great relevance, diverse graph operations and products have been introduced or developed  for practical purposes, e.g. designing models for complex networks. Frequently used graph operations and products include edge iteration~\cite{DoGoMe02,ZhCo11}, planar triangulation~\cite{AnHeAnDa05,DoMa05,JiLiZh17}, Kronecker product~\cite{We62,LeFa07,LeChKlFaGh10}, hierarchical product~\cite{BaCoDaFi09,BaDaFiMi09,BaCoDaFi16}, and corona product~\cite{LvYiZh15,ShAdMi17,QiLiZh18}.

Among various graph operations and products, subdivision is one of the most popular ones. For a simple graph $G$, its subdivision graph is the graph obtained from $G$ by inserting a new node into every edge of $G$. The properties of subdivision graphs have been extensively studied~\cite{Wo05, HuYaQi15,CaMiMo16}. Moreover, many extended subdivision graphs were proposed, such as $q$-full subdivision graph~\cite{We01,FiHa12} and $q$-subdivision graphs~\cite{Ya88}. The $q$-full subdivision graph of $G$ is obtained from $G$ by replacing each of its edges with pairwise internally disjoint paths of length $q+1$, while the $q$-subdivision graph $S_q(G)$ of $G$ is obtained from $G$ by replacing each edge $uv$ with $q$ disjoint paths of length $2$: $ux_1v$, $ux_2v$, $\ldots$, $ux_qv$. The $q$-subdivision operation was iteratively applied to the particular graph consisting of an edge,  generating the hierarchical lattices---a model of complex networks with the striking scale-free fractal topologies~\cite{ZhZhZo07}, which has received much recent attention~\cite{ZhYaGa11,ZhShHuCh12,LiZh17}. However, in contrast to the traditional subdivision, the properties of $S_q(G)$ for a general graph  $G$ are still not well understood, despite the wide ranges of applications for this graph operation.

In this paper, we present an extensive study of the properties for $q$-subdivision graph $S_q(G)$ of a simple connected graph $G$. We provide explicit formulas for eigenvalues and eigenvectors of normalized adjacency matrix for $S_q(G)$ in terms of those associated with $G$, based on which we determine two-node hitting time and the Kemeny constant for random walks on $S_q(G)$ in terms of those corresponding to $G$. Also, we derive the expressions of two-node resistance distance, Kirchhoff index, additive degree-Kirchhoff index for $S_q(G)$, and multiplicative degree-Kirchhoff index, in terms of the quantities for $G$. Finally, we obtain closed-form solutions to related quantities for iterated $q$-subdivisions of a graph $G$, and apply those obtained results to  the scale-free fractal hierarchical lattices,  leading to  explicit expressions for some quantities.

\section{Preliminaries}

In this section, we introduce some basic concepts for a graph, random walks and electrical networks.

\subsection{Graph and Matrix Notation}

%To start with, for the completeness of our work, we may first shed light on several basic concepts.

Let $G(V,E)$ be a simple connected graph with $n$ nodes and $m$ edges.  The $n$ nodes constitute node set $V(G)=\{1,2,\ldots,n\}$, and $m$ edges form edge set $E(G)=\{e_1,e_2,\ldots, e_m\}$.

Let $A$ denote the adjacency matrix of $G$, the entry $A(i,j)$ of which is 1 (or 0) if nodes $i$ and $j$ are (not) directly connected in $G$. Let $\Gamma (i)$ denote the set of neighbors of  node $i$  in graph $G$. Then the degree of node $i$ is  $d_i=\sum_{j  \in \Gamma (i)}A(i,j)$, which constitutes the $i$th entry of the diagonal degree matrix $D$ of $G$. The incidence matrix of  $G$ is an $n\times m$ matrix $B$, where $B(i,j)=1$ (or 0) if $i$ is (not) incident with $e_j$. %Then the degree of node $i$ is given by $d_i={\sum{^{n}_{j=1}}}A(i,j)$,

\begin{lemma}\label{b1a}\emph{\cite{CvDoSa80}}
Let $G$ be a simple connected graph with $n$ nodes. Then the rank of its incidence matrix $B$ is
            ${\rm rank} (B)=n-1$ if $G$ is bipartite, and ${\rm rank} (B)=n$  otherwise.
\end{lemma}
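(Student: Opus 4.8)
The plan is to determine $\mathrm{rank}(B)$ by computing the dimension of its left null space, via the identity $\mathrm{rank}(B) = n - \dim N$, where $N = \{x \in \mathbb{R}^n : x^{\top} B = 0\}$. The key observation is that, since $B$ is the (unsigned) incidence matrix, each column corresponding to an edge $e = uv$ has exactly two nonzero entries, both equal to $1$, in rows $u$ and $v$. Hence the condition $x^{\top} B = 0$ is equivalent to the system of edge equations $x_u + x_v = 0$ for every edge $uv \in E(G)$; equivalently, $x_v = -x_u$ whenever $u$ and $v$ are adjacent.

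First I would exploit connectivity to show that any $x \in N$ is determined by a single scalar. Fixing a spanning tree $T$ of $G$ with a root $r$, the equations along the tree edges force $x_v = (-1)^{\mathrm{dist}_T(r,v)} x_r$ for every node $v$, so the whole vector is pinned down by the value $x_r$ alone. Thus $\dim N \le 1$ in all cases, and the only question is whether $x_r$ may be chosen freely.

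Next I would analyze consistency with the non-tree edges. For a non-tree edge $uv$, the constraint $x_u + x_v = 0$, combined with the tree-propagated values, becomes a parity condition on the unique fundamental cycle that $uv$ closes in $T$: the constraint is automatically satisfied when that cycle has even length, and collapses to $x_r = 0$ (hence $x = 0$) when it has odd length. I would then invoke the standard characterization that $G$ is bipartite if and only if it contains no odd cycle. Assembling the two cases: if $G$ is bipartite, every fundamental cycle is even, all non-tree constraints are vacuous, and $x_r$ is free, so $N$ is spanned by the vector equal to $+1$ on one part and $-1$ on the other, giving $\dim N = 1$ and $\mathrm{rank}(B) = n-1$; if $G$ is not bipartite, some odd cycle forces $x_r = 0$, so $N = \{0\}$ and $\mathrm{rank}(B) = n$.

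The main obstacle is the consistency argument of the third step, namely verifying that the sign assignment propagated along $T$ is compatible with \emph{all} remaining edges precisely when no odd cycle is present. This is the combinatorial heart of the bipartiteness dichotomy and must be stated carefully so that every non-tree edge is handled simultaneously; an appealing alternative that sidesteps the explicit cycle bookkeeping is to pass to the signless Laplacian $Q = B B^{\top} = D + A$, note that $\mathrm{rank}(B) = \mathrm{rank}(Q)$, and use the known fact that, for a connected graph, $0$ is an eigenvalue of $Q$ (necessarily simple) exactly when $G$ is bipartite.
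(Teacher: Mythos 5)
The paper never proves this lemma at all --- it is imported verbatim from the cited reference \cite{CvDoSa80} --- so there is no in-paper argument to compare against; your job was to supply the classical proof, and what you give is essentially the standard one and is correct. Identifying the left kernel $N=\{x\in\mathbb{R}^n : x^\top B=0\}$ with sign assignments satisfying $x_u+x_v=0$ on every edge (this uses that $B$ is the \emph{unsigned} incidence matrix, which is indeed the paper's convention), pinning $x$ down to the single scalar $x_r$ along a spanning tree, and then splitting on bipartiteness yields $\dim N=1$ in the bipartite case (spanned by the $\pm 1$ vector on the two parts) and $\dim N=0$ otherwise, whence $\mathrm{rank}(B)=n-\dim N$ gives exactly $n-1$ and $n$ respectively.

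One step deserves tightening, and you partly flagged it yourself: in the non-bipartite case you say ``some odd cycle forces $x_r=0$,'' but your constraints are indexed by non-tree edges, i.e., by \emph{fundamental} cycles, and an odd cycle of $G$ need not be fundamental. Two clean fixes: (a) argue by contraposition --- if every fundamental cycle were even, the tree-propagated signs $x_v=(-1)^{\mathrm{dist}_T(r,v)}$ would satisfy $x_u=-x_v$ on all edges, so the sign pattern would be a proper $2$-coloring and $G$ would be bipartite; or (b) drop the tree for this direction and propagate $x_v=-x_u$ around an arbitrary odd closed walk, obtaining $x_w=-x_w$, hence $x_w=0$ at some node, after which connectivity forces $x\equiv 0$. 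Either patch is two lines. Your alternative route via the signless Laplacian $Q=BB^\top=D+A$ with $\mathrm{rank}(B)=\mathrm{rank}(Q)$ is valid over $\mathbb{R}$, but be aware that the fact ``for connected $G$, $0$ is an eigenvalue of $Q$ iff $G$ is bipartite'' is this very lemma in eigenvalue clothing; invoking it as a black box makes the argument nearly circular, and the kernel computation you already carried out is precisely its proof ($x^\top Qx=\sum_{uv\in E}(x_u+x_v)^2$).
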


\subsection{Random Walks on Graphs}

For a graph $G$, we can define a discrete-time unbiased random walk taking place on it. For any time step, the walker jumps from its current location, node $i$,  to another node $j$ with probability $A(i,j)/d_i$. Such a random walk on $G$ is in fact a Markov chain characterized by the transition probability matrix $T=D^{-1}A$, with the entry  $T(i,j)$ equal to $A(i,j)/d_i$. For a random walk on graph $G$, the stationary distribution is an $n$-dimension vector $\pi=(\pi_1, \pi_2, \ldots, \pi_n)$ satisfying $\pi  T=\pi$ and $\sum_{i=1}^n \pi_i=1 $. It is easy to verify that $\pi=(d_1/2m, d_2/2m, \ldots, d_n/2m)$  for unbiased random walks on $G$.
	
The transition  probability matrix $T$ of graph $G$ is not symmetric. However, $T$ is similar to the normalized adjacency matrix $P$ of $G$, which is defined by
\begin{equation*}
P=D^{-\frac{1}{2}} A D^{-\frac{1}{2}}=D ^{\frac{1}{2}}T D^{-\frac{1}{2}}\,.
\end{equation*}
Obviously, $P$ is symmetric, with the $(i,j)$th entry being $P(i,j)=\frac{A(i,j)}{\sqrt{d_id_j}}$.

\begin{lemma}\label{b1}\emph{\cite{Ch97}}
Let $G$ be a simple connected graph with $n$ nodes, and let
$1=\lambda_1>\lambda_2\geq\ldots\geq\lambda_n\geq-1$ be  the eigenvalues of its normalized adjacency matrix $P$. Then
$\lambda_n=-1$ if and only if $G$ is bipartite.
\end{lemma}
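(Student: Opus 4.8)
The plan is to analyze the smallest eigenvalue $\lambda_n$ through the quadratic form of the matrix $I+P$, whose smallest eigenvalue is $1+\lambda_n$. Since $1+\lambda_n\geq 0$ is equivalent to $\lambda_n\geq -1$ and $1+\lambda_n=0$ is equivalent to $\lambda_n=-1$, it suffices to show that $I+P$ is positive semidefinite and that it is singular precisely when $G$ is bipartite. First I would introduce the change of variables $g=D^{-1/2}x$, i.e.\ $x=D^{1/2}g$, and rewrite the form: using $P(i,j)=A(i,j)/\sqrt{d_id_j}$, one finds
\begin{equation*}
x^T(I+P)x=\sum_{i}d_i\,g_i^2+2\sum_{(u,v)\in E}g_u g_v=\sum_{(u,v)\in E}(g_u+g_v)^2.
\end{equation*}
The middle step uses $\sum_i d_i g_i^2=\sum_{(u,v)\in E}(g_u^2+g_v^2)$, which holds because each edge contributes its two endpoints' squared values to the degree sum.

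This identity immediately shows $x^T(I+P)x\geq 0$ for all $x$, so $I+P$ is positive semidefinite and hence every eigenvalue of $P$ satisfies $\lambda_i\geq -1$. Moreover $\lambda_n=-1$ holds if and only if there is a nonzero vector $x$ (equivalently a nonzero $g$, since $D^{1/2}$ is invertible) with $\sum_{(u,v)\in E}(g_u+g_v)^2=0$, that is, with $g_u=-g_v$ for every edge $(u,v)$.

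For the forward direction, suppose such a nonzero $g$ exists. The relation $g_u=-g_v$ on every edge forces $|g_u|=|g_v|$ across each edge, so by connectedness of $G$ the quantity $|g_u|$ equals a common positive constant $c$ for all nodes. Partitioning $V=V_1\cup V_2$ according to whether $g_u=c$ or $g_u=-c$, the condition $g_u=-g_v$ guarantees that every edge runs between $V_1$ and $V_2$; hence $G$ is bipartite. Conversely, if $G$ is bipartite with parts $V_1,V_2$, setting $g_u=1$ on $V_1$ and $g_u=-1$ on $V_2$ makes $g_u+g_v=0$ on every edge, so the form vanishes on the nonzero vector $x=D^{1/2}g$; thus $-1$ is an eigenvalue of $P$ and $\lambda_n=-1$.

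The routine bookkeeping is the edge-sum identity for the quadratic form; the step that actually carries the content is the forward direction, where connectedness must be invoked to upgrade the local sign flips $g_u=-g_v$ into a single global two-coloring of the vertex set.
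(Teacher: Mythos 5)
Your proof is correct. Note that the paper itself supplies no proof of this lemma---it is quoted directly from the cited reference \cite{Ch97}---and your argument, rewriting $x^\top(I+P)x=\sum_{uv\in E}(g_u+g_v)^2$ with $g=D^{-1/2}x$, concluding positive semidefiniteness, and using connectedness to upgrade the edgewise sign alternation $g_u=-g_v$ into a global two-coloring, is essentially the standard quadratic-form proof of the corresponding normalized-Laplacian statement in that reference, so there is nothing to fix (the only step left tacit, that a PSD matrix with a vanishing quadratic form on a nonzero vector annihilates that vector, is routine).
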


Let $v_1, v_2,\ldots, v_n$ be the normalized mutually orthonormal eigenvectors corresponding to  the $n$ eigenvalues $\lambda_1,\lambda_2,\ldots,\lambda_n$, where $v_i=(v_{i1},v_{i2},\ldots,v_{in})^\top$. Then,
\begin{equation}\label{eig=1}
  v_1=\Big(\sqrt{d_1/2m},\sqrt{d_2/2m},...,\sqrt{d_n/2m}\Big)^\top
\end{equation}
and
\begin{equation}
\sum_{k=1}^n v_{ik}v_{jk}=\sum_{k=1}^n v_{ki}v_{kj}=\left\{
                                                      \begin{array}{ll}
                                                        1, & \hbox{if $i=j$;} \\
                                                        0, & \hbox{otherwise.}
                                                      \end{array}
                                                    \right.
\end{equation}
As for a bipartite graph $G$, whose node set $V(G)$ can be divided into two disjoint subsets $V_1$ and $V_2$, i.e., $V(G)=V_1\cup V_2$, we have
\begin{equation}\label{eig=-1}
  v_{ni}=\sqrt{d_i/2m},\, i\in V_1;~~v_{nj}=-\sqrt{d_j/2m}, \,  j\in V_2.
\end{equation}

A fundamental quantity related to random walks is the hitting time. The hitting time $T_{ij}$ from one node $i$ to another node $j$ is the expected time taken by a walker to first reach node $j$ starting from node $i$, which is relevant in various scenarios~\cite{Re01}. Many interesting quantities of graph $G$ can be defined or derived from hitting times.  For example, for a  graph $G$, its Kemeny's constant $K(G)$  is defined as  the expected number of steps required for a walker starting from node $i$ to a destination node, which is chosen randomly according to a stationary distribution of  random walks on $G$~\cite{Hu14}.  The Kemeny's constant $K(G)$  is independent of the selection of starting node $i$~\cite{LeLo02}.

The hitting time  $T_{ij}$ for random walks on graph $G$ is encoded in the eigenvalues and eigenvectors of its normalized adjacency matrix $P$.
\begin{theorem}\label{Theorem1}\emph{\cite{Lo93}}
For random walks on a simple connected graph $G$, the  hitting time $T_{ij}$ from one node $i$ to another node $j$ is
\begin{equation*}
  T_{ij}=2m\sum_{k=2}^n \frac{1}{1-\lambda_k}
  \left(\frac{v_{kj}^2}{d_j}-\frac{v_{ki}v_{kj}}{\sqrt{d_i d_j}}\right).  %, \forall i,j \in V(G)
\end{equation*}
In particular, when  $G$ is a bipartite graph with $V(G)=V_1 \cup V_2$, then
\begin{equation*}
T_{ij}= 2m \sum_{k=2}^{n-1} \frac{1}{1-\lambda_k}
     \left(\frac{v_{kj}^2}{d_j}-\frac{v_{ki}v_{kj}}{\sqrt{d_id_j}}\right),
\end{equation*}
if $i$ and $j$ are both in $V_1$ or $V_2$;
\begin{equation*}
T_{ij}= 2m \sum_{k=2}^{n-1} \frac{1}{1-\lambda_k}
      \left(\frac{v_{kj}^2}{d_j}-\frac{v_{ki}v_{kj}}{\sqrt{d_id_j}}\right)+1,
\end{equation*}
otherwise.
\end{theorem}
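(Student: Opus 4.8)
The plan is to obtain $T_{ij}$ from the fundamental matrix of the walk and to evaluate it through the spectrum of $P$. For an irreducible chain with stationary row vector $\pi$, the matrix $I-T+\mathbf 1\pi$ (with $\mathbf 1\pi$ the rank-one matrix all of whose rows equal $\pi$) is invertible, and its inverse $Z$ satisfies the classical identity $T_{ij}=(Z_{jj}-Z_{ij})/\pi_j$ for $i\neq j$. This holds for every irreducible chain, periodic or not, so I would take it as the starting point and reduce the theorem to computing the entries of $Z$.

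To diagonalize $Z$ I would pass to the symmetric picture via $T=D^{-1/2}PD^{1/2}$. Conjugating, $\mathbf 1\pi$ becomes the projector $v_1v_1^\top$ (since $v_1$ of \eqref{eig=1} is the Perron eigenvector of $P$), so $\hat Z:=(I-P+v_1v_1^\top)^{-1}=v_1v_1^\top+\sum_{k=2}^n\frac{1}{1-\lambda_k}\,v_kv_k^\top$, using $P=\sum_k\lambda_kv_kv_k^\top$ and $\lambda_1=1$. Then $Z_{ij}=\sqrt{d_j/d_i}\,\hat Z_{ij}$ and $\pi_j=d_j/2m$, whence
\begin{equation*}
Z_{jj}-Z_{ij}=\Bigl(v_{1j}^2-\sqrt{d_j/d_i}\,v_{1i}v_{1j}\Bigr)+\sum_{k=2}^n\frac{1}{1-\lambda_k}\Bigl(v_{kj}^2-\sqrt{d_j/d_i}\,v_{ki}v_{kj}\Bigr).
\end{equation*}
By \eqref{eig=1} the $k=1$ bracket equals $\tfrac{d_j}{2m}-\sqrt{d_j/d_i}\,\tfrac{\sqrt{d_id_j}}{2m}=0$, so multiplying by $1/\pi_j=2m/d_j$ gives $T_{ij}=2m\sum_{k=2}^n\frac{1}{1-\lambda_k}\bigl(\frac{v_{kj}^2}{d_j}-\frac{v_{ki}v_{kj}}{\sqrt{d_id_j}}\bigr)$, the first formula.

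For the bipartite case the one eigenvalue to watch is $\lambda_n=-1$ (Lemma~\ref{b1}); crucially, because $Z$ is a genuine inverse, this mode enters with the perfectly finite weight $\frac{1}{1-(-1)}=\frac12$ rather than a divergent geometric sum. I would then evaluate its bracket $v_{nj}^2-\sqrt{d_j/d_i}\,v_{ni}v_{nj}$ from \eqref{eig=-1}. If $i,j$ lie in the same part the signs of $v_{ni},v_{nj}$ match and the bracket vanishes exactly as the $k=1$ term did, so the mode drops out and the sum runs only to $n-1$. If $i,j$ lie in different parts the signs are opposite, the bracket equals $d_j/m$, and its contribution is $\frac{2m}{d_j}\cdot\frac12\cdot\frac{d_j}{m}=1$, giving the advertised extra $+1$.

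The main obstacle is the periodic (bipartite) case: one must be sure the fundamental-matrix identity $T_{ij}=(Z_{jj}-Z_{ij})/\pi_j$ survives when $T$ is periodic and $T^t$ fails to converge, and then read off the correct sign of $v_{ni}$ relative to $v_{nj}$ from the bipartition. Everything else is the routine cancellation of the Perron mode and bookkeeping of degrees. As a consistency check I would note that the same $+1$ emerges from the first-passage generating function $F_{ij}(s)=(G_{ij}(s)-\delta_{ij})/G_{jj}(s)$, $G_{ij}(s)=\sqrt{d_j/d_i}\sum_k v_{ki}v_{kj}/(1-\lambda_k s)$, differentiated as $s\to1^-$, where the weight $\tfrac12$ on the $\lambda_n=-1$ mode appears as the Abel limit of $\sum_t(-1)^t$.
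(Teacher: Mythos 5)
Your proposal is correct, but note that the paper itself contains no proof of this statement: it is quoted verbatim from Lov\'asz's survey \cite{Lo93}, so there is no internal argument to compare against. Your fundamental-matrix route is a sound and standard way to supply the missing proof: the identity $T_{ij}=(Z_{jj}-Z_{ij})/\pi_j$ with $Z=(I-T+\mathbf{1}\pi)^{-1}$ does hold for every irreducible finite chain, periodic or not, and the one step you flag as an obstacle closes in two lines --- since $T$ commutes with $\mathbf{1}\pi$ one gets $TZ=ZT=Z-I+\mathbf{1}\pi$, and substituting the candidate $h_{ij}=(Z_{jj}-Z_{ij})/\pi_j$ into the first-step equations $h_{ij}=1+\sum_k T(i,k)h_{kj}$ (for $i\neq j$, with $h_{jj}=0$) verifies them; uniqueness of the solution for an irreducible chain then gives the identity without any appeal to convergence of $T^t$. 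The rest of your computation checks out: the conjugation $I-T+\mathbf{1}\pi=D^{-1/2}(I-P+v_1v_1^\top)D^{1/2}$ is exact because $D^{1/2}\mathbf{1}\pi D^{-1/2}$ has entries $\sqrt{d_id_j}/2m=v_{1i}v_{1j}$; the Perron bracket cancels by Eq.~\eqref{eig=1}; invertibility is automatic even in the bipartite case since the $\lambda_n=-1$ mode carries the finite weight $\tfrac{1}{2}$, as you observe; and your evaluation of the $k=n$ bracket via Eq.~\eqref{eig=-1} (zero within a part, $d_j/m$ across parts, hence the extra $+1$) is right, with the implicit fact that $\lambda_k\neq-1$ for $k\leq n-1$ guaranteed because $-1$ is a simple eigenvalue of a connected bipartite graph. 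In short, you have given a complete and correct proof of a result the paper only cites.
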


In contrast, the Kemeny's constant of $G$ is only dependent on the eigenvalues of $P$.
\begin{lemma}\emph{\cite{butler2016algebraic}}\label{lemmaKem}
Let $G$ be a simple connected graph with $n$ nodes. Then
\begin{equation*}
K(G)=\sum_{j=1}^{n} \pi_j T_{ij}= \sum_{i=2}^n\frac{1}{1-\lambda_i},
\end{equation*}
where $1=\lambda_1>\lambda_2\geq\ldots\geq\lambda_n\geq-1$ are eigenvalues of matrix $P$.
\end{lemma}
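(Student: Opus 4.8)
The plan is to substitute the explicit hitting-time formula of Theorem~\ref{Theorem1} into the defining sum $K(G)=\sum_{j=1}^{n}\pi_j T_{ij}$ and then collapse the resulting double sum using the orthonormality of the eigenvectors. Since $\pi_j=d_j/(2m)$, the prefactor $2m$ in $T_{ij}$ cancels the $1/(2m)$ in $\pi_j$, so I would first obtain
\[
K(G)=\sum_{j=1}^{n}d_j\sum_{k=2}^{n}\frac{1}{1-\lambda_k}\left(\frac{v_{kj}^2}{d_j}-\frac{v_{ki}v_{kj}}{\sqrt{d_id_j}}\right).
\]

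Next I would interchange the two summations, move the eigenvalue sum outside, and distribute $d_j$ across the bracket. The first piece becomes $d_j\cdot v_{kj}^2/d_j=v_{kj}^2$, whose sum over $j$ equals $1$ by the normalization of the eigenvector $v_k$. The second piece becomes $-\frac{v_{ki}}{\sqrt{d_i}}\sum_{j=1}^{n}\sqrt{d_j}\,v_{kj}$, and this is where the one real idea of the proof enters: by \eqref{eig=1} we have $\sqrt{d_j}=\sqrt{2m}\,v_{1j}$, so $\sum_{j=1}^{n}\sqrt{d_j}\,v_{kj}=\sqrt{2m}\sum_{j=1}^{n}v_{1j}v_{kj}=\sqrt{2m}\,\langle v_1,v_k\rangle$. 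By the orthonormality relation this inner product vanishes for every $k\ge 2$.

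Assembling the pieces, each bracket collapses to $1-0=1$, and I am left with $K(G)=\sum_{k=2}^{n}1/(1-\lambda_k)$, which after renaming the index is the claimed expression. This computation also makes transparent why $K(G)$ does not depend on the starting node $i$: the only $i$-dependent contribution lives in the cross term, and that term is annihilated by orthogonality. For bipartite $G$ one might worry about the boundary eigenvalue $\lambda_n=-1$, but since $\langle v_1,v_n\rangle=0$ holds there as well, the same single computation covers the bipartite case without modification.

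I do not expect a genuine obstacle here; the statement is essentially a one-line corollary of orthonormality once Theorem~\ref{Theorem1} is available. The only step demanding a moment's care is recognizing that $\sqrt{d_j}$ is proportional to the components of the Perron eigenvector $v_1$, which is precisely the fact that turns the degree-weighted cross term into an eigenvector inner product and lets it vanish.
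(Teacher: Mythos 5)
Your proof is correct, and there is in fact no in-paper argument to compare it against: the paper states Lemma~\ref{lemmaKem} as a quoted result from \cite{butler2016algebraic} without proof, so your derivation supplies the missing argument. The computation itself is sound: the cancellation of $2m$ against $\pi_j=d_j/2m$, the collapse of $\sum_j v_{kj}^2$ to $1$ by normalization, and the identification $\sqrt{d_j}=\sqrt{2m}\,v_{1j}$ turning the cross term into $\sqrt{2m}\,\langle v_1,v_k\rangle=0$ for $k\geq 2$ are all exactly right, and this route also explains the $i$-independence of $K(G)$, which the paper only asserts. The one place where you are slightly too quick is the bipartite case. Theorem~\ref{Theorem1} as stated in the paper does \emph{not} give the unrestricted sum $\sum_{k=2}^{n}$ for bipartite $G$; it gives the truncated sum $\sum_{k=2}^{n-1}$ plus an additive $+1$ when $i$ and $j$ lie on opposite sides. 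So to stay within the paper's toolkit you should either (a) run your computation on that version, noting that the $+1$ corrections contribute $\sum_{j\ \mathrm{opposite}\ i}\pi_j=\tfrac{m}{2m}=\tfrac12=\tfrac{1}{1-\lambda_n}$ (each side of a bipartite graph carries total degree $m$), which restores the full sum $\sum_{k=2}^{n}\tfrac{1}{1-\lambda_k}$; or (b) observe via Eq.~\eqref{eig=-1} that the $k=n$ term of the unrestricted formula equals $0$ for same-side pairs and $1$ for cross-side pairs, so the two forms of Theorem~\ref{Theorem1} coincide and your single computation is legitimate after all. Either repair is one line; with it, the proof is complete.
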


\subsection{Electrical Networks}

For a simple connected graph $G$, we can define a corresponding electrical network $G^*$,  which is obtained from   $G$ by replacing each edge in  $G$ with  a unit resistor~\cite{DoSn84}. The resistance distance $r_{ij}$ between a pair of nodes $i$ and $j$ in $G$ is equal to the effective resistance between $i$ and $j$ in $G^*$.  Similar to the hitting time $T_{ij}$, resistance distance
$r_{ij}$ can also be expressed in terms of the eigenvalues and eigenvectors of normalized adjacency matrix $P$.
\begin{lemma}\emph{\cite{ChZh07}} \label{lemmaRij}
Let $G$ be a simple connected  graph.  Then   resistance distance $r_{ij}$ between nodes $i$  and $j$ is
\begin{equation*}
 r_{ij}=\sum_{k=2}^n \frac{1}{1-\lambda_k}
  \left(\frac{v_{ki}}{\sqrt{d_i }}-\frac{v_{kj}}{\sqrt{d_j}}\right)^2.  %, \forall i,j \in V(G)
\end{equation*}
\end{lemma}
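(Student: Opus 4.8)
The plan is to reduce the computation of $r_{ij}$ to the spectral data of $P$ by routing it through the combinatorial Laplacian $L=D-A$. First I would invoke the classical electrical-flow characterization of effective resistance: injecting a unit current at $i$ and extracting it at $j$ produces a vector $\phi$ of node potentials solving $L\phi=e_i-e_j$, where $e_i$ is the $i$th standard basis vector, and then $r_{ij}=\phi_i-\phi_j=(e_i-e_j)^\top\phi$. Since $G$ is connected, $\ker L$ is spanned by the all-ones vector, and $e_i-e_j$ is orthogonal to it, so such a $\phi$ exists and $r_{ij}$ is well defined.

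Next I would rewrite $L$ through $P$. Because $I-P=D^{-\frac{1}{2}}(D-A)D^{-\frac{1}{2}}=D^{-\frac{1}{2}}LD^{-\frac{1}{2}}$, we have $L=D^{\frac{1}{2}}(I-P)D^{\frac{1}{2}}$, and the orthonormal eigenvectors $v_1,\dots,v_n$ of $P$ are also eigenvectors of $I-P$, now with eigenvalues $1-\lambda_k$. As $\lambda_1=1$, this yields the spectral form
\begin{equation*}
I-P=\sum_{k=2}^n(1-\lambda_k)\,v_kv_k^\top,\qquad (I-P)^{+}=\sum_{k=2}^n\frac{1}{1-\lambda_k}\,v_kv_k^\top,
\end{equation*}
where $(I-P)^{+}$ denotes the Moore--Penrose pseudoinverse.

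Then I would substitute $\psi=D^{\frac{1}{2}}\phi$, which turns the potential equation into $(I-P)\psi=D^{-\frac{1}{2}}(e_i-e_j)$. Applying the pseudoinverse gives $\psi=(I-P)^{+}D^{-\frac{1}{2}}(e_i-e_j)$, and since $\phi=D^{-\frac{1}{2}}\psi$ and $D^{-\frac{1}{2}}$ is symmetric,
\begin{equation*}
r_{ij}=(e_i-e_j)^\top\phi=\bigl(D^{-\frac{1}{2}}(e_i-e_j)\bigr)^\top(I-P)^{+}\bigl(D^{-\frac{1}{2}}(e_i-e_j)\bigr).
\end{equation*}
Expanding the pseudoinverse in its spectral form and using $\bigl(D^{-\frac{1}{2}}(e_i-e_j)\bigr)^\top v_k=v_{ki}/\sqrt{d_i}-v_{kj}/\sqrt{d_j}$ collapses the quadratic form to the claimed sum $\sum_{k=2}^n\frac{1}{1-\lambda_k}\left(\frac{v_{ki}}{\sqrt{d_i}}-\frac{v_{kj}}{\sqrt{d_j}}\right)^2$.

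The step demanding the most care, and the one I expect to be the main obstacle, is the solvability of the singular system: I must verify that $D^{-\frac{1}{2}}(e_i-e_j)$ lies in the range of $I-P$, i.e.\ is orthogonal to the kernel direction $v_1$. This is precisely where the explicit Perron eigenvector \eqref{eig=1} is essential, since it gives $v_1^\top D^{-\frac{1}{2}}(e_i-e_j)=\tfrac{1}{\sqrt{2m}}-\tfrac{1}{\sqrt{2m}}=0$, so the term $k=1$ legitimately drops out and the pseudoinverse faithfully inverts $I-P$ on the relevant vector. Once this compatibility is secured, the remaining manipulations are routine spectral bookkeeping.
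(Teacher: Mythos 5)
Your proposal is correct, but there is no proof in the paper to compare it against: Lemma~\ref{lemmaRij} is stated as a known result quoted from \cite{ChZh07}, so you have supplied a proof where the paper supplies only a citation. Your route --- characterizing $r_{ij}$ via a potential vector solving $L\phi=e_i-e_j$ with $L=D-A$, conjugating to $I-P=D^{-\frac{1}{2}}LD^{-\frac{1}{2}}$, and expanding the Moore--Penrose pseudoinverse $(I-P)^{+}=\sum_{k=2}^n\frac{1}{1-\lambda_k}v_kv_k^\top$ --- is the standard spectral derivation, essentially the argument of the cited source, and you correctly identify and discharge the one genuinely delicate point, namely the compatibility condition $v_1^\top D^{-\frac{1}{2}}(e_i-e_j)=\frac{1}{\sqrt{2m}}-\frac{1}{\sqrt{2m}}=0$ via the explicit Perron vector \eqref{eig=1}. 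Two small points are worth stating explicitly to make the argument airtight. First, the spectral form of the pseudoinverse requires $1-\lambda_k\neq 0$ for all $k\geq 2$; this holds because connectedness makes the eigenvalue $1$ simple ($\lambda_2<1$), and note that bipartiteness causes no trouble here since $\lambda_n=-1$ only gives the harmless factor $1-\lambda_n=2$ --- which is why, unlike Theorem~\ref{Theorem1}, this lemma needs no bipartite case split. Second, the pseudoinverse selects one particular solution $\psi$ of $(I-P)\psi=D^{-\frac{1}{2}}(e_i-e_j)$, the general solution differing by multiples of $v_1$; since $D^{-\frac{1}{2}}v_1$ is the constant vector $\frac{1}{\sqrt{2m}}\mathbf{1}$ (writing $\mathbf{1}$ for the all-ones vector), the quantity $(e_i-e_j)^\top\phi$ is independent of this choice, so your evaluation of $r_{ij}$ is unambiguous. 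With these remarks your proposal is a complete, self-contained proof of the quoted lemma.
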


\begin{lemma}\label{Foster}
\emph{\cite{Fo49}} Let $G$ be a simple connected graph with $n$ nodes. Then the sum of resistance distances between all pairs of adjacent nodes in  $G$ is equivalent to $n-1$, i.e.
    \begin{equation*}
        \sum_{ij\in E(G)}r_{ij}=n-1.
    \end{equation*}
where the summation is taken over all the edges in $G$.
\end{lemma}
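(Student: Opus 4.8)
The plan is to prove Foster's identity directly from the spectral formula for the resistance distance in Lemma~\ref{lemmaRij}. Substituting that expression into the edge sum and interchanging the (finite) order of summation gives
\begin{equation*}
\sum_{ij\in E(G)}r_{ij}=\sum_{k=2}^n\frac{1}{1-\lambda_k}\sum_{ij\in E(G)}\left(\frac{v_{ki}}{\sqrt{d_i}}-\frac{v_{kj}}{\sqrt{d_j}}\right)^2.
\end{equation*}
The heart of the argument is to show that the inner edge sum equals exactly $1-\lambda_k$, so that each summand over $k$ collapses to $1$ and the total is $n-1$.

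First I would expand the square as $\frac{v_{ki}^2}{d_i}+\frac{v_{kj}^2}{d_j}-2\frac{v_{ki}v_{kj}}{\sqrt{d_id_j}}$ and treat the two kinds of terms separately. For the square terms, each node $\ell$ is incident to exactly $d_\ell$ edges, so summing $\frac{v_{ki}^2}{d_i}+\frac{v_{kj}^2}{d_j}$ over all edges produces $\sum_{\ell}d_\ell\cdot\frac{v_{k\ell}^2}{d_\ell}=\sum_\ell v_{k\ell}^2=1$, using the normalization of the eigenvector $v_k$. For the cross terms, I would pass from the undirected edge sum to a sum over ordered pairs, recognizing $\frac{v_{ki}v_{kj}}{\sqrt{d_id_j}}=v_{ki}v_{kj}P(i,j)$ whenever $ij\in E(G)$, so that
\begin{equation*}
2\sum_{ij\in E(G)}\frac{v_{ki}v_{kj}}{\sqrt{d_id_j}}=\sum_{i,j}P(i,j)v_{ki}v_{kj}=v_k^\top P v_k=\lambda_k,
\end{equation*}
where the last equality uses $Pv_k=\lambda_k v_k$ together with $v_k^\top v_k=1$. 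Combining the two contributions, the inner edge sum is $1-2\cdot\frac{\lambda_k}{2}=1-\lambda_k$, exactly as required, and summing over $k$ gives $\sum_{k=2}^n 1=n-1$.

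The main subtlety to watch is the bookkeeping between the undirected edge sum and the symmetric matrix sum: the factor of $2$ in the cross term relies on each unordered edge being counted twice in $\sum_{i,j}A(i,j)(\cdots)$, while the diagonal-term count relies on each node appearing in exactly $d_\ell$ edges. One should also confirm that no division by zero occurs: the factor $\frac{1}{1-\lambda_k}$ is summed only for $k\ge 2$, where $\lambda_k<1$, and the cancellation against $1-\lambda_k$ removes it entirely, so the argument remains valid even for bipartite $G$ where $\lambda_n=-1$ (the corresponding term simply contributes $1$). I expect essentially no obstacle beyond this careful accounting, since the key identity $v_k^\top P v_k=\lambda_k$ is immediate from the eigenvalue equation.
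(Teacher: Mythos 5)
Your proof is correct as written. One point of comparison first: the paper does not prove Lemma~\ref{Foster} at all---it is Foster's classical theorem, imported by citation from the literature---so your argument is necessarily a different route, and in fact it gives a self-contained derivation inside the paper's own toolkit, using only the spectral resistance formula of Lemma~\ref{lemmaRij} and the orthonormality of the eigenvectors $v_k$. Both pieces of bookkeeping you flag are sound: each node $\ell$ lies on exactly $d_\ell$ edges, so $\sum_{ij\in E(G)}\bigl(v_{ki}^2/d_i+v_{kj}^2/d_j\bigr)=\sum_{\ell}v_{k\ell}^2=1$, and the ordered/unordered double count gives $2\sum_{ij\in E(G)}v_{ki}v_{kj}/\sqrt{d_id_j}=v_k^\top P v_k=\lambda_k$; together these say that your inner edge sum is precisely the normalized-Laplacian quadratic form $v_k^\top(I-P)v_k=1-\lambda_k$, which is the conceptual core of the cancellation. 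Your handling of degenerate cases is also right: only $k\geq 2$ appears, connectivity of $G$ forces $\lambda_2<1$ so Lemma~\ref{lemmaRij} never divides by zero, and in the bipartite case $1-\lambda_n=2$, so that term harmlessly contributes $1$. As for what each approach buys: the paper's citation keeps the exposition short and rests on arguments independent of spectra (Foster's original electrical argument, or the probabilistic commute-time proof, which in this paper's language is the identity $\sum_{ij\in E(G)}(T_{ij}+T_{ji})=2m(n-1)$ read through Lemma~\ref{lemmaR}), whereas your derivation makes the lemma logically self-contained modulo Lemma~\ref{lemmaRij} and matches the eigenvector-expansion style already used in the proofs of Theorems~\ref{HT} and~\ref{conKem}, so it would integrate naturally into the paper.
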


There are some intimate relationships between random walks on graphs and electrical networks. For example,
the resistance distance $r_{ij}$ is closely related to hitting times $T_{ij}$ and $T_{ji}$ of $G$, as stated in the  following lemma.
\begin{lemma}\emph{\cite{ChRaRuSm89}}\label{lemmaR}
For any pair of nodes $i$ and $j$ in a graph $G$ with $m$ edges, the following relation holds true:
\begin{equation*}
2mr_{ij}=T_{ij}+T_{ji}. %,~\forall i,j \in V(G).
\end{equation*}
\end{lemma}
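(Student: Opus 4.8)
The plan is to establish the commute-time identity $2m\,r_{ij}=T_{ij}+T_{ji}$ purely spectrally, exploiting the two expansions already recorded above: the hitting-time formula of Theorem~\ref{Theorem1} and the resistance-distance formula of Lemma~\ref{lemmaRij}. Both are written in terms of the \emph{same} eigenvalues $\lambda_k$ and eigenvectors $v_k$ of the normalized adjacency matrix $P$, so the relation should fall out of a direct algebraic comparison rather than from the classical electrical-network/superposition argument of \cite{ChRaRuSm89}.

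First I would write $T_{ij}$ from Theorem~\ref{Theorem1} and obtain $T_{ji}$ by interchanging $i$ and $j$ in that same formula. Adding the two, the diagonal terms accumulate and the cross terms coincide and double, giving
\begin{equation*}
T_{ij}+T_{ji}=2m\sum_{k=2}^{n}\frac{1}{1-\lambda_k}\left(\frac{v_{ki}^2}{d_i}-\frac{2v_{ki}v_{kj}}{\sqrt{d_id_j}}+\frac{v_{kj}^2}{d_j}\right).
\end{equation*}
The key observation is then that the parenthesised expression is a perfect square,
\begin{equation*}
\frac{v_{ki}^2}{d_i}-\frac{2v_{ki}v_{kj}}{\sqrt{d_id_j}}+\frac{v_{kj}^2}{d_j}=\left(\frac{v_{ki}}{\sqrt{d_i}}-\frac{v_{kj}}{\sqrt{d_j}}\right)^2,
\end{equation*}
so the summand matches term-by-term the one in Lemma~\ref{lemmaRij}. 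For a non-bipartite $G$ both sums range over $k=2,\dots,n$, and the identity $T_{ij}+T_{ji}=2m\,r_{ij}$ is immediate.

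The only genuine subtlety — and what I expect to be the main point to verify — is the bipartite case, where Theorem~\ref{Theorem1} truncates the hitting-time sum at $k=n-1$ and inserts a correction of $+1$ whenever $i$ and $j$ lie in different parts, whereas the resistance-distance sum of Lemma~\ref{lemmaRij} still carries the $k=n$ term with $\lambda_n=-1$. I would reconcile these by evaluating exactly this missing $k=n$ term. Using the explicit bottom eigenvector from \eqref{eig=-1}, the quantity $v_{ni}/\sqrt{d_i}$ equals $1/\sqrt{2m}$ for $i\in V_1$ and $-1/\sqrt{2m}$ for $i\in V_2$; hence $\bigl(v_{ni}/\sqrt{d_i}-v_{nj}/\sqrt{d_j}\bigr)^2$ vanishes when $i,j$ lie in the same part and equals $2/m$ when they lie in different parts. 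Since $2m/(1-\lambda_n)=m$, this $k=n$ term contributes $0$ in the same-part case and $2$ in the cross-part case, precisely matching the combined $+1$ corrections (one from $T_{ij}$, one from $T_{ji}$) of Theorem~\ref{Theorem1}. Thus the two sides agree in every configuration, which completes the argument.
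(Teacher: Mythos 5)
Your proposal is correct, but note that the paper itself offers no proof of this lemma at all: it is quoted verbatim from \cite{ChRaRuSm89}, where the commute-time identity $2mr_{ij}=T_{ij}+T_{ji}$ is established by the classical random-walk/electrical-network argument (interpreting hitting times via potentials and currents and invoking superposition), entirely without spectral machinery. What you do instead is derive the identity as a purely algebraic consequence of two other quoted results: symmetrizing Lov\'asz's expansion in Theorem~\ref{Theorem1} so that the cross terms double and the summand becomes the perfect square $\left(\frac{v_{ki}}{\sqrt{d_i}}-\frac{v_{kj}}{\sqrt{d_j}}\right)^2$, then matching it against Lemma~\ref{lemmaRij}. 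Your handling of the bipartite case is the genuinely careful part and is exactly right: with $\lambda_n=-1$ the $k=n$ term of the resistance sum is nonsingular, and by \eqref{eig=-1} it contributes $2m\cdot\frac{1}{1-\lambda_n}\left(\frac{v_{ni}}{\sqrt{d_i}}-\frac{v_{nj}}{\sqrt{d_j}}\right)^2$, which equals $0$ when $i,j$ lie in the same part and $m\cdot\frac{2}{m}=2$ when they lie in opposite parts, precisely absorbing the two $+1$ corrections (note the sign ambiguity of $v_n$ is harmless since only squares appear). What each route buys: your spectral derivation shows the paper's quoted formulas are mutually consistent and neatly explains the origin of the bipartite corrections, whereas the original argument of \cite{ChRaRuSm89} is more elementary and logically prior --- indeed, since Theorem~\ref{Theorem1} and Lemma~\ref{lemmaRij} are themselves unproved citations, and in parts of the literature the commute-time identity is an ingredient in proving such spectral formulas, your argument is best read as a consistency check within this paper's framework rather than an independent proof of the classical fact.
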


The resistance distance is an important quantity~\cite{GhBoSa08}. Various graph invariants based on resistance distances have been defined and studied. Among these invariants, the Kirchhoff index~\cite{KlRa93} is of vital importance.
\begin{definition}\emph{\cite{KlRa93}}\label{defK}
The Kirchhoff index of a graph $G$ is defined as
\begin{equation*}
 \mathcal{K}(G)=\frac{1}{2}\sum_{i,j=1}^{n}r_{ij}=\sum_{\{i,j\}\subseteq V(G)}r_{ij}.
\end{equation*}
\end{definition}
Kirchhoff index  has found wide applications. For example, it can be used as  measures of the overall connectedness of a network~\cite{TiLe10}, the robustness of first-order consensus algorithm in noisy networks~\cite{PaBa14}, as well as the edge centrality of complex networks~\cite{LiZh18}.
In recent years,  several modifications for Kirchhoff index have been proposed, including additive degree-Kirchhoff index~\cite{GuFeYu12} and multiplicative degree-Kirchhoff index~\cite{ChZh07}. For a graph $G$, its  additive degree-Kirchhoff index $ \bar{\mathcal{K}}(G)$ and multiplicative degree-Kirchhoff index  $\tilde{\mathcal{K}}(G)$ are defined as
\begin{equation*}
  \bar{\mathcal{K}}(G)=\frac{1}{2}\sum_{i,j=1}^{n}(d_i+d_j)r_{ij}=\sum_{\{i,j\}\subseteq V(G)}(d_i+d_j)r_{ij}
\end{equation*}
and
\begin{equation*}
    \tilde{\mathcal{K}}(G)=\frac{1}{2}\sum_{i,j=1}^{n}d_id_jr_{ij}=\sum_{\{i,j\}\subseteq V(G)}d_id_jr_{ij},
\end{equation*}
respectively.

It has been proved that $\tilde{\mathcal{K}}(G)$ can be represented in terms of  the eigenvalues of the matrix $P$.
\begin{lemma}\emph{\cite{ChZh07}}\label{lemmaK*}
Let $G$ be a graph with $n$ nodes and $m$ edges. Then
\begin{equation*}
 \tilde{\mathcal{K}}(G)=2m\sum_{i=2}^n\frac{1}{1-\lambda_i}.
\end{equation*}
%where $1=\lambda_1>\lambda_2\geq \ldots \geq\lambda_n\geq-1$ are eigenvalues of the matrix $P$.
\end{lemma}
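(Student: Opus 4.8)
The plan is to establish the equivalent identity $\tilde{\mathcal{K}}(G) = 2m\,K(G)$ and then invoke Lemma~\ref{lemmaKem}, which already identifies $K(G) = \sum_{i=2}^n \frac{1}{1-\lambda_i}$. Since the target formula differs from the Kemeny constant only by the prefactor $2m$, this reduction is the natural route and sidesteps any direct manipulation of eigenvectors.

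First I would start from the definition $\tilde{\mathcal{K}}(G) = \frac{1}{2}\sum_{i,j=1}^n d_id_j r_{ij}$ and replace each resistance distance using Lemma~\ref{lemmaR}, namely $r_{ij} = \frac{1}{2m}(T_{ij}+T_{ji})$. This turns the symmetric quantity into a double sum of hitting times weighted by $d_id_j$. Exploiting the symmetry of the summation under interchange of $i$ and $j$, the two terms $T_{ij}$ and $T_{ji}$ contribute equally, collapsing the expression to $\frac{1}{2m}\sum_{i,j} d_id_j T_{ij}$.

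Next I would factor the double sum as $\sum_i d_i\big(\sum_j d_j T_{ij}\big)$ and substitute the stationary distribution $\pi_j = d_j/2m$, so that the inner sum becomes $2m\sum_j \pi_j T_{ij} = 2m\,K(G)$ by the definition of the Kemeny constant in Lemma~\ref{lemmaKem}; crucially, this inner sum is independent of the starting node $i$, as noted in the text preceding that lemma. The outer sum then contributes the factor $\sum_i d_i = 2m$, yielding $\tilde{\mathcal{K}}(G) = 2m\,K(G)$, and an application of Lemma~\ref{lemmaKem} completes the argument.

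There is no serious obstacle along this route; the only points requiring care are the bookkeeping of the $d_id_j$ weights and the correct use of the starting-node independence of $K(G)$. As an alternative that stays purely spectral, one could instead substitute the eigenvector expansion of Lemma~\ref{lemmaRij} directly into the definition of $\tilde{\mathcal{K}}(G)$, expand the square $(v_{ki}/\sqrt{d_i} - v_{kj}/\sqrt{d_j})^2$, and evaluate the resulting sums using $\sum_i v_{ki}^2 = 1$ and $\sum_i d_i = 2m$. Here the decisive simplification is that the cross term $\big(\sum_i \sqrt{d_i}\,v_{ki}\big)^2$ vanishes for every $k\ge 2$ by orthogonality of $v_k$ to $v_1 = (\sqrt{d_1/2m},\ldots,\sqrt{d_n/2m})^\top$, leaving exactly $4m$ from each spectral term and reproducing $2m\sum_{k=2}^n \frac{1}{1-\lambda_k}$.
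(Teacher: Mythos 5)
Your proposal is correct, and there is nothing in the paper to compare it against step by step: the paper states this lemma as a quoted result from \cite{ChZh07} and gives no proof at all, so any valid derivation is ``a different route'' by default. Your primary argument is sound: with the conventions $T_{ii}=0$ and $r_{ii}=0$ the diagonal terms are harmless, the relabeling symmetry correctly collapses $\frac{1}{4m}\sum_{i,j}d_id_j(T_{ij}+T_{ji})$ to $\frac{1}{2m}\sum_{i,j}d_id_jT_{ij}$, and the inner sum $\sum_j d_j T_{ij}=2m\sum_j \pi_j T_{ij}=2mK(G)$ uses exactly the starting-node independence the paper asserts before Lemma~\ref{lemmaKem}; the outer factor $\sum_i d_i=2m$ then gives $\tilde{\mathcal{K}}(G)=2mK(G)$, and Lemma~\ref{lemmaKem} finishes. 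This route has the virtue of exposing the structural identity $\tilde{\mathcal{K}}(G)=2mK(G)$, which the paper itself implicitly exploits when it derives Theorem~\ref{con2} from Theorem~\ref{conKem} ``according to Lemma~\ref{lemmaKem} and~\ref{lemmaK*}.'' Your alternative spectral route is also correct and is closer to what the original reference presumably does: expanding $\bigl(v_{ki}/\sqrt{d_i}-v_{kj}/\sqrt{d_j}\bigr)^2$ under the weight $d_id_j$, the two square terms each contribute $2m$ via $\sum_i v_{ki}^2=1$ and $\sum_j d_j=2m$, and the cross term $2\bigl(\sum_i\sqrt{d_i}\,v_{ki}\bigr)^2$ vanishes for $k\geq 2$ by orthogonality to $v_1$ in Eq.~\eqref{eig=1} --- the one bookkeeping point worth making explicit is that the $4m$ per spectral term is then halved by the prefactor $\tfrac{1}{2}$ in the definition, yielding the stated $2m\sum_{k=2}^n\frac{1}{1-\lambda_k}$. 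Neither route is circular within the paper's logic, since Lemma~\ref{lemmaKem}, Lemma~\ref{lemmaR}, and Lemma~\ref{lemmaRij} are all cited independently of Lemma~\ref{lemmaK*}.
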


\section{ $q$-subdivision Graphs and Their Matrices }

In this section, we introduce the  $q$-subdivision graph of a graph $G$,  which is an extension of the traditional subdivision graph, since $1$-subdivision graph is exactly the  subdivision graph.  The subdivision of $G$, denoted by $S(G)$, is the graph obtained from $G$ by inserting a new node into  each edge in $G$.  The subdivision graph can be easily extended to a general case.

\begin{definition}
Let $G$ be a simple connected graph. For a positive integer $q$, the $q$-subdivision graph of $G$, denoted by $S_q(G)$, is the graph obtained from $G$ by  replacing each edge $uv$ in $G$ with $q$ disjoint paths of length $2$: $ux_1v$, $ux_2v$, $\ldots$, $ux_qv$.
\end{definition}

In what follows, for a quantity $Z$ of $G$, we use   $\hat{Z}$  to denote the corresponding quantity associated with $S_q(G)$. Then it is easy to verify that in the $q$-subdivision graph $S_q(G)$, there are $\hat{n}=n+mq$ nodes and $\hat{m}=2mq$ edges.

By definition, $S_q(G)$ is a bipartite graph, irrespective of $G$.  Then, the  node set $\hat{V}:=V(S_q(G))$ of $ S_q(G)$ can be divided into two disjoint parts $V$ and $V^\prime$,  where $V$ is the set of old nodes inherited from $G$, while $V^\prime$ is the set of  new nodes generated in the process of performing $q$-subdivision operation on $G$. Moreover, $V^\prime$ can be further classified into $q$ parts as ${V}^\prime=V^{(1)} \cup  V^{(2)}\cup \cdots  \cup V^{(q)}$,  where each   $V^{(i)}$ ($i=1,2,\ldots,q$) contains $m$ new nodes produced by $m$ different edges in $G$. Namely,
\begin{equation} \label{div1}
\hat{V} =V \cup V^{(1)} \cup  V^{(2)}\cup \cdots  \cup V^{(q)}.
\end{equation}
%Let $\Gamma_{S_q(G)}(x)$ denote the set of neighbors of  node $x$  in graph $S_q(G)$.
By construction, for each old edge $uv$, there exists one and only one node $x$ in each $V^{(i)}$ ($i=1,2,\ldots,q$),  satisfying  $\hat{\Gamma} (x)=\{u,v\}$. Thus, for two different sets $V^{(i)}$ and $V^{(j)}$, the structural and dynamical properties of nodes belonging them are  equivalent to each other.

For $S_q(G)$,  its adjacency matrix $\hat{A}$,  diagonal degree matrix $\hat{D}$, and normalized adjacency matrix $\hat{P}$, can be expressed in terms of related matrices of $G$ as
\begin{equation*}
\hat{A}=
\left(
  \begin{array}{cccc}
    O           & B         & \cdots\   & B \\
    B^\top      & O         & \cdots\   & O  \\
    \vdots\     & \vdots\   & \ddots\   & \vdots\ \\
    B^\top      & O         &\cdots\    & O  \\
  \end{array}
\right),
\end{equation*}
\begin{equation*}
 \hat{D}={\rm diag} \{qD,\underbrace{2I_m,\ldots,2I_m}_q\},
\end{equation*}
\begin{equation}\label{Matrix P}
\hat{P}=\hat{D}^{-\frac{1}{2}} \hat{A} \hat{D}^{-\frac{1}{2}}
=\frac{1}{\sqrt{2q}}
\left(
  \begin{array}{cccc}
    O                       &D^{-\frac{1}{2}}B  & \cdots\   & D^{-\frac{1}{2}}B \\
    B^\top D^{-\frac{1}{2}} & O                 & \cdots\   & O \\
    \vdots\                 & \vdots\           & \ddots\   & O \\
    B^\top D^{-\frac{1}{2}} & O                 & \cdots\   & O \\
  \end{array}
\right),
\end{equation}
where $I_m$ is the $m \times m $ identity matrix.

\section{Eigenvalues and Eigenvectors of Normalized Adjacency Matrix  for  $q$-subdivision Graphs}

In this section, we study the eigenvalues and eigenvectors of normalized adjacency matrix $\hat{P}$  for  $q$-subdivision graphs $S_q(G)$. We will show that both eigenvalues and eigenvectors for  $\hat{P}$ can be expressed in terms of those related quantities associated with graph $G$.

\begin{lemma}\label{l0}
Let $\hat\lambda$ be any non-zero eigenvalue of $\hat{P}$. Then, after appropriate labeling of  nodes,  the eigenvector $\psi$ corresponding  to $\hat\lambda$ can  be rewritten in the form $\psi^\top=(\psi_1,\psi_2,\ldots,\psi_{n+qm})^\top=\left( {\psi^\prime}^\top,{\psi^{(1)}}^\top,{\psi^{(2)}}^\top,\cdots,{\psi^{(q)}}^\top\right )$, where $\psi^\prime$ is an $n$ dimensional vector, $\psi^{(i)}$ ($i=1,2,\ldots,q$) is an $m$ dimensional vector satisfying $\psi^{(1)}={\psi^{(2)}}=\cdots=\psi^{(q)}$.
\end{lemma}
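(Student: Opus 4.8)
The plan is to write the eigenvalue equation $\hat{P}\psi=\hat\lambda\psi$ in the block form dictated by the partition $\hat V=V\cup V^{(1)}\cup\cdots\cup V^{(q)}$ and simply read off the structure. Using the explicit block expression for $\hat{P}$ given above, I would partition $\psi$ exactly as in the statement, $\psi=\left({\psi'}^\top,{\psi^{(1)}}^\top,\ldots,{\psi^{(q)}}^\top\right)^\top$, with $\psi'$ of dimension $n$ and each $\psi^{(i)}$ of dimension $m$, so that the $i$-th new-node block of $\psi$ is multiplied against the $i$-th row block of $\hat{P}$.

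The old-node (first) row block produces the single equation $\frac{1}{\sqrt{2q}}D^{-\frac12}B\sum_{i=1}^{q}\psi^{(i)}=\hat\lambda\,\psi'$, which I would record but not otherwise need. The decisive observation comes from the remaining $q$ row blocks: for each $i=1,2,\ldots,q$ the $i$-th new-node block gives
\[
\frac{1}{\sqrt{2q}}\,B^\top D^{-\frac12}\psi'=\hat\lambda\,\psi^{(i)}.
\]
The left-hand side is completely independent of the index $i$, because all $q$ off-diagonal blocks in the first column of $\hat{P}$ are identical copies of $\frac{1}{\sqrt{2q}}B^\top D^{-\frac12}$, while the entire lower-right $qm\times qm$ block of $\hat{P}$ vanishes, so no component $\psi^{(j)}$ with $j\ge1$ can enter these $q$ equations.

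From here the conclusion is immediate. Since $\hat\lambda\neq0$ by hypothesis, I can divide to obtain $\psi^{(i)}=\frac{1}{\hat\lambda\sqrt{2q}}B^\top D^{-\frac12}\psi'$ for every $i$, and this right-hand side does not depend on $i$; hence $\psi^{(1)}=\psi^{(2)}=\cdots=\psi^{(q)}$, as claimed. I do not expect any serious obstacle in this argument: the only points requiring care are the bookkeeping of the block partition and the explicit use of $\hat\lambda\neq0$. That hypothesis is genuinely needed, since for $\hat\lambda=0$ the new-node equations degenerate to $B^\top D^{-\frac12}\psi'=0$ with the $\psi^{(i)}$ constrained only through the sum $\sum_i\psi^{(i)}$, so they need not coincide; the nonzero eigenvalue is exactly what forces the equality.
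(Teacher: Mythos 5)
Your proposal is correct and follows exactly the paper's own proof: both write $\hat\lambda\psi=\hat{P}\psi$ in the block form of Eq.~\eqref{Matrix P}, extract $\hat\lambda\psi^{(i)}=\frac{1}{\sqrt{2q}}B^\top D^{-\frac{1}{2}}\psi^\prime$ for each $i$, and divide by $\hat\lambda\neq0$ to conclude $\psi^{(1)}=\cdots=\psi^{(q)}$. Your closing remark that the equality genuinely fails for $\hat\lambda=0$ is a correct observation consistent with the zero-eigenvector structure in Theorem~\ref{Theorem2}, but otherwise there is nothing to distinguish the two arguments.
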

\begin{proof}
By definition of eigenvalues and eigenvectors, we have  $ \hat\lambda\psi=\hat{P}\psi$. Considering   Eq.~\eqref{Matrix P}, we obtain $\hat{\lambda}\psi^{(i)}=\frac{1}{\sqrt{2q}}B^\top D^{-\frac{1}{2}}\psi^\prime$, $i=1,2,\ldots,q$. When  $\hat\lambda\neq0$,  $\psi^{(i)}=\frac{1}{\hat{\lambda}\sqrt{2q}}B^\top D^{-\frac{1}{2}}\psi^\prime$ holds for all $i=1,2,\ldots,q$.
\end{proof}

Now we are ready to evaluate the full eigenvalues and their multiplicities of $\hat{P}$.
\begin{lemma}\label{l1}
Let $\hat\lambda$ be a  non-zero eigenvalue of $\hat{P}$. Then, $2\hat\lambda^2-1$ is an eigenvalue of $P$ and its multiplicity, denoted by $m_P(2\hat\lambda^2-1)$ is identical to  the multiplicity $m_{\hat P}(\hat\lambda)$ for  eigenvalue  $\hat\lambda$ of $\hat P$.
\end{lemma}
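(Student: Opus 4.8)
The plan is to exploit the eigenvector structure from Lemma~\ref{l0} to convert the eigenequation for $\hat P$ into an eigenequation for $P$. Writing out $\hat P\psi=\hat\lambda\psi$ blockwise using Eq.~\eqref{Matrix P} together with $\psi^{(1)}=\cdots=\psi^{(q)}$, the first block row gives $\frac{\sqrt q}{\sqrt2}D^{-\frac12}B\,\psi^{(1)}=\hat\lambda\,\psi^\prime$, while each of the remaining $q$ block rows gives $\frac{1}{\sqrt{2q}}B^\top D^{-\frac12}\psi^\prime=\hat\lambda\,\psi^{(1)}$. Since $\hat\lambda\neq0$, the second relation lets me solve $\psi^{(1)}=\frac{1}{\hat\lambda\sqrt{2q}}B^\top D^{-\frac12}\psi^\prime$ and substitute into the first; collecting the constants $\frac{\sqrt q}{\sqrt2}\cdot\frac{1}{\hat\lambda\sqrt{2q}}=\frac{1}{2\hat\lambda}$ yields
\begin{equation*}
D^{-\frac12}BB^\top D^{-\frac12}\psi^\prime=2\hat\lambda^2\psi^\prime.
\end{equation*}

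The key identity is $BB^\top=D+A$, valid for the incidence matrix of any simple graph: the diagonal entry at $i$ counts the edges incident to $i$ (giving $d_i$), while the off-diagonal entry at $(i,j)$ counts edges joining $i$ and $j$ (giving $A(i,j)$). Substituting gives $D^{-\frac12}BB^\top D^{-\frac12}=I+P$, so the displayed equation becomes $(I+P)\psi^\prime=2\hat\lambda^2\psi^\prime$, that is, $P\psi^\prime=(2\hat\lambda^2-1)\psi^\prime$. To conclude that $2\hat\lambda^2-1$ is genuinely an eigenvalue of $P$, I must check $\psi^\prime\neq0$: if $\psi^\prime=0$ then $\psi^{(1)}=\cdots=\psi^{(q)}=0$ by the formula above, forcing $\psi=0$ and contradicting that $\psi$ is an eigenvector.

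For the multiplicity claim I would exhibit a linear isomorphism between the two eigenspaces. The map $\psi\mapsto\psi^\prime$ sends the $\hat\lambda$-eigenspace of $\hat P$ into the $(2\hat\lambda^2-1)$-eigenspace of $P$, and it is injective because Lemma~\ref{l0} shows the full vector $\psi$ is determined by $\psi^\prime$ (each block $\psi^{(i)}$ being a fixed multiple of $B^\top D^{-\frac12}\psi^\prime$). Conversely, given any eigenvector $\phi$ with $P\phi=(2\hat\lambda^2-1)\phi$, I would set $\psi^\prime=\phi$ and $\psi^{(i)}=\frac{1}{\hat\lambda\sqrt{2q}}B^\top D^{-\frac12}\phi$ for each $i$, and verify directly that $\hat P\psi=\hat\lambda\psi$. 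The remaining block rows hold by construction, while the first block row reduces, via $(I+P)\phi=2\hat\lambda^2\phi$, exactly to $\hat\lambda\psi^\prime$. This gives a two-sided inverse, so the eigenspaces have equal dimension and $m_P(2\hat\lambda^2-1)=m_{\hat P}(\hat\lambda)$.

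The computations are short once $BB^\top=D+A$ is in hand; I expect the point requiring the most care to be the surjectivity half of the isomorphism --- confirming that the $\psi$ built from an arbitrary $P$-eigenvector $\phi$ really does solve the first block row of $\hat P\psi=\hat\lambda\psi$, which is precisely where the relation $2\hat\lambda^2-1$ is used in reverse. Establishing that bijection, rather than merely bounding one multiplicity by the other, is what pins down the multiplicities exactly.
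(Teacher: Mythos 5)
Your proposal is correct, and its skeleton matches the paper's proof: both use Lemma~\ref{l0} to eliminate the new-node blocks for $\hat\lambda\neq 0$, reduce $\hat P\psi=\hat\lambda\psi$ to $P\psi^\prime=(2\hat\lambda^2-1)\psi^\prime$, and derive the multiplicity statement from the fact that Eq.~\eqref{Eig4} reconstructs $\psi$ from $\psi^\prime$. The execution differs in two worthwhile ways. First, where the paper carries out the elimination entrywise at a node $x$ (Eqs.~\eqref{Eig2}--\eqref{Eig11}), computing the entries of $\hat P$ from the degree relations $\hat d_x=qd_x$ and $\hat d_s=2$, you run the same elimination in block form and compress it into the single incidence identity $BB^\top=D+A$, so that $D^{-\frac12}BB^\top D^{-\frac12}=I+P$ does all the work; this is shorter, less error-prone, and nowhere used in the paper (which invokes $B$ only through its rank, Lemma~\ref{b1a}). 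Second, for the multiplicities the paper argues by contradiction (``an extra eigenvector of $P$ would lift via Eq.~\eqref{Eig4} to one of $\hat P$''), which is correct in spirit but leaves the linearity and two-sided injectivity of the correspondence implicit; your explicit isomorphism --- restriction $\psi\mapsto\psi^\prime$ one way, the lift $\psi^\prime=\phi$, $\psi^{(i)}=\frac{1}{\hat\lambda\sqrt{2q}}B^\top D^{-\frac12}\phi$ the other, with the first block row verified through $(I+P)\phi=2\hat\lambda^2\phi$ --- pins down the equality of eigenspace dimensions cleanly. You also supply a small check the paper omits: that $\psi^\prime\neq 0$, since $\psi^\prime=0$ would force all blocks $\psi^{(i)}$ to vanish (using $\hat\lambda\neq 0$) and hence $\psi=0$. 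One cosmetic remark: since $P$ and $\hat P$ are real symmetric, geometric and algebraic multiplicities coincide, so your dimension count does settle the multiplicity claim exactly as stated.
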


\begin{proof}
Let $\hat\lambda$ be an eigenvalue of $\hat{P}$, and let $\psi^\top=(\psi_1,\psi_2,\ldots,\psi_{n+mq})^\top$ be its corresponding eigenvector. In addition, let $\psi^\prime$ be an $n$-dimensional vector obtained from $\psi$ by restricting its components  to the node set $V$. Then $\psi^\prime$ is an eigenvector of $P$ for $G$, as we will show below. By definition,
\begin{equation} \label{Eig1}
 \hat\lambda\psi=\hat{P}\psi.
\end{equation}

Our goal is to express $\hat{\lambda}$ in terms of eigenvalues of $P$. For this purpose, we consider an old node $x$ in $\hat{P}$. Let  $d_x$ and $\hat{d}_x$ denote  the degree of node $x$ in graphs $G$ and $S_q(G)$, respectively.  Then, by construction, we have the following relation $\hat{d}_x=q\, d_x$. From Eq.~\eqref{div1},  the neighbors of $x$  can be divided  into $q$ classes, which are, respectively, in $V^{(1)}$, $V^{(2)} $, $\ldots$ , $V^{(q)}$.  Moreover, the properties of nodes in  these $q$ classes are identical. We can appropriately label the nodes in  $V$ such that $\{ 1, 2,\ldots,  d_x\}$ is the set of neighbors for $x$ in $G$.   Then in $S_q(G)$  %For each integer $k$ ($0 \leq k\leq q-1$),
we assume that  nodes with labelling  $n+km+i$ ($0\leq k \leq q-1$ and $ 1\leq i\leq d_x$) are neighbors of $x$, which   belong to  $V^{(k+1)}$. Note that in $S_q(G)$   each new neighbor of $x$ is simultaneously connected to an old neighbor of $x$. For the sake of convenience, we  assume that the neighbors of the newly-added node $n+km+i$ ($0\leq k \leq q-1$ and $ 1\leq i\leq d_x$) are $x$ and $i$. %$\hat{V}$

From Eq.~\eqref{Eig1}, we  obtain the equation corresponding to node $x$, which reads
\begin{equation} \label{Eig2}
\begin{aligned}
  \hat\lambda\psi_x         &= \sum_{j=1} ^{\hat{d}_x} \hat{P}(x,j) \psi_j
                             = \sum_{k=0} ^{q-1} \sum_{i=1} ^{d_x} \hat{P}(x,n+km+i) \psi_{n+km+i}
\\                          &= q \sum_{i=1} ^{d_x} \hat{P}(x,n+i) \psi_{n+i},
\end{aligned}
\end{equation}
where the next-to-last equality is obtained according to Lemma~\ref{l0}.
Analogously, for a newly-added node $s$ with neighboring set $\hat{\Gamma}(s)=\{u,v\}$, we obtain %their corresponding elements of eigenvector $\psi$ satisfy
\begin{equation}\label{Eig3}
 \hat\lambda \psi_{s}= \hat P(s,u)\psi_u +\hat P(s,v)\psi_v,
\end{equation}
which implies  % Moreover, considering $\hat\lambda\neq0$, we can have the following results:
\begin{equation}\label{Eig4}
 \psi_{s}= \frac{\hat P(s,u)}{\hat\lambda}\psi_u +\frac{\hat P(s,v)}{\hat\lambda}\psi_v.
\end{equation}
Thus, $ \psi_{n+i}$ in  Eq.~\eqref{Eig1} can be written as
\begin{equation}\label{Eig5}
 \psi_{n+i}= \frac{\hat{P}(n+i,x)}{\hat\lambda}\psi_x +\frac{\hat{P}(n+i,i)}{\hat\lambda}\psi_i.
\end{equation}
Inserting Eq.~\eqref{Eig5}  into Eq.~\eqref{Eig2}  leads to
\begin{equation}\label{Eig6}
\hat\lambda \psi_x = q \sum_{i=1} ^{d_x} \frac{\Big(\hat{P}(x,n+i)\Big)^2}{\hat\lambda}\psi_{x}
                         + q \sum_{i=1} ^{d_x} \frac{\hat{P}(n+i,x) \hat{P}(n+i,i)}{\hat\lambda}\psi_{i}.
\end{equation}
By definition of $\hat{P}$, for each neighbour $n+i$ of node $x$, one has
\begin{equation}\label{Eig7}
 \hat{P}(x,n+i)=\frac{1}{\sqrt{2\hat{d}_x}}=\frac{1}{\sqrt{2qd_x}}.
\end{equation}
In addition, for an old neighbor $i$ of node $x$, the following relation holds:
\begin{equation}\label{Eig8}
\begin{aligned}
 \hat{P}(n+i,x) \hat{P}(n+i,i)  &   = \frac{1}{\sqrt{2\hat{d}_i}} \frac{1}{\sqrt{2\hat{d}_x}}
                                     \\ &   = \frac{1}{2} \frac{1}{\sqrt{\hat{d}_i \hat{d}_x}}
                                            = \frac{1}{2} \hat{P}(x,i).
\end{aligned}
\end{equation}
Substituting Eqs.~\eqref{Eig7} and~\eqref{Eig8}  back into Eq.~\eqref{Eig6}, we arrives at
\begin{equation}\label{Eig9}
   \left (\hat\lambda^2-\frac{1}{2}\right)\psi_x=\frac{q}{2} \sum_{i=1}^{d_x} \hat{P}(x,i)\psi_i,
\end{equation}
which only involves old nodes in $V$.
Thus, according to the following relation
\begin{equation}\label{Eig10}
    \hat{P}(x,i)=\frac{1}{\sqrt{\hat{d}_x \hat{d}_i}}=\frac{1}{q\sqrt{d_x d_i}}=\frac{P(x,i)}{q},
\end{equation}
we have
\begin{equation}\label{Eig11}
    (2\hat\lambda^2-1)\psi_ x= \sum_{i=1}^{d_x} P(x,i) \psi_i,~\forall x\in V,
\end{equation}
implying that $2\hat\lambda^2-1$ is an eigenvalue of $P$, and $\psi^\prime$, the $n$-dimensional restricted vector defined above, is one associated eigenvector. Furthermore, $\psi$ can be totally determined by $\psi^\prime$ using Eq.~\eqref{Eig4}. Thus $m_P(2\hat\lambda^2-1)\geq m_{\hat P}(\hat\lambda)$.

Suppose that $m_P(2\hat\lambda^2-1) > m_{\hat P}(\hat\lambda)$. This means that there should exist an extra eigenvector $\psi_e$ associated to $2\hat\lambda^2-1$ without a corresponding eigenvector in $\hat{P}$. But Eq.~\eqref{Eig4} provides $\psi_e$ with an associated eigenvector of $\hat{P}$ since $\lambda\neq0$. This contradicts our assumption. Therefore, $m_P(2\hat\lambda^2-1) = m_{\hat P}(\hat\lambda)$.
\end{proof}

\begin{lemma} \label{l2}
Let $\lambda$ be any eigenvalue of $P$ such that $\lambda \neq -1$. Then $\sqrt{\frac{1+\lambda}{2}}$ and $-\sqrt{\frac{1+\lambda}{2}}$ are eigenvalues of $\hat{P}$ and
 $m_{\hat{P}}\left(\sqrt{\frac{1+\lambda}{2}}\right) = m_{\hat{P}}\left(-\sqrt{\frac{1+\lambda}{2}}\right)
 =m_P(\lambda)$.
\end{lemma}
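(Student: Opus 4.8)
The plan is to prove the converse of Lemma~\ref{l1}: instead of pushing an eigenvector of $\hat P$ down to $P$, I would lift an eigenvector of $P$ up to $\hat P$. Fix an eigenvalue $\lambda\neq-1$ of $P$ together with a corresponding eigenvector $\psi'$, and set $\hat\lambda=\pm\sqrt{(1+\lambda)/2}$, which is nonzero precisely because $\lambda\neq-1$. Motivated by Lemma~\ref{l0} and Eq.~\eqref{Eig4}, I would define a candidate eigenvector $\psi$ of $\hat P$ by taking its restriction to the old node set $V$ to be $\psi'$, and setting each of its $q$ new-node blocks equal to the common vector $\frac{1}{\hat\lambda\sqrt{2q}}B^\top D^{-1/2}\psi'$. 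The prefactor $1/\hat\lambda$ is legitimate exactly because $\hat\lambda\neq0$, and this choice enforces the structure $\psi^{(1)}=\cdots=\psi^{(q)}$ demanded by Lemma~\ref{l0}. Since the old-node part is the nonzero vector $\psi'$, the lifted vector $\psi$ is nonzero.

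Next I would verify $\hat P\psi=\hat\lambda\psi$ block by block using the form of $\hat P$ in Eq.~\eqref{Matrix P}. The equations indexed by new nodes hold by construction: they are exactly the relations of Eq.~\eqref{Eig4} used to define the new-node blocks. The equations indexed by old nodes are the crux. Eliminating the new-node blocks, the old-node rows of $\hat P\psi=\hat\lambda\psi$ collapse to $\tfrac12\,D^{-1/2}BB^\top D^{-1/2}\psi'=\hat\lambda^2\psi'$. Here I would invoke the standard incidence identity $BB^\top=D+A$, which gives $D^{-1/2}BB^\top D^{-1/2}=I+P$, so the old-node condition becomes $\tfrac12(I+P)\psi'=\hat\lambda^2\psi'$, i.e. $\tfrac{1+\lambda}{2}\,\psi'=\hat\lambda^2\psi'$ after using $P\psi'=\lambda\psi'$. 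This is satisfied precisely when $\hat\lambda^2=(1+\lambda)/2$, which is our choice for either sign. Hence both $\sqrt{(1+\lambda)/2}$ and $-\sqrt{(1+\lambda)/2}$ are eigenvalues of $\hat P$. Equivalently, one can retrace Eqs.~\eqref{Eig6}--\eqref{Eig11} of Lemma~\ref{l1} in reverse, node by node, where the same identity $BB^\top=D+A$ is hidden inside the combinatorial factors of Eqs.~\eqref{Eig7}--\eqref{Eig8}.

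For the multiplicities I would simply appeal back to Lemma~\ref{l1}. Having shown that $\hat\lambda=\pm\sqrt{(1+\lambda)/2}$ are \emph{nonzero} eigenvalues of $\hat P$, Lemma~\ref{l1} applies to each of them and yields $m_{\hat P}(\hat\lambda)=m_P(2\hat\lambda^2-1)=m_P(\lambda)$, since $2\hat\lambda^2-1=\lambda$ for both signs. This gives the two claimed equalities at once, and the present lemma is exactly the surjectivity (existence) direction complementing the injectivity direction of Lemma~\ref{l1}.

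I expect the main obstacle to be the old-node computation, and specifically recognizing that the combinatorial prefactors appearing there assemble into $D^{-1/2}BB^\top D^{-1/2}=I+P$; this is the mechanism that converts the additive shift $\lambda=2\hat\lambda^2-1$ of Lemma~\ref{l1} into the map $\lambda\mapsto\pm\sqrt{(1+\lambda)/2}$ here. The remaining work is bookkeeping: confirming that the lift is well-defined only when $\lambda\neq-1$ (otherwise $\hat\lambda=0$ and the construction degenerates, which is exactly why $-1$ is excluded), and noting that the two sign choices produce genuinely distinct eigenvectors, sharing the same old-node part but differing by a sign on the new-node blocks.
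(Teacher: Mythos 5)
Your proposal is correct, and it is actually more complete than the paper's own proof, which disposes of Lemma~\ref{l2} in a single line (``a direct consequence of Lemma~\ref{l1}''). The mechanism you use --- lifting an eigenvector $\psi'$ of $P$ to $\hat P$ by placing the common block $\frac{1}{\hat\lambda\sqrt{2q}}B^\top D^{-1/2}\psi'$ on each of the $q$ new-node parts --- is exactly the lift the paper's citation tacitly relies on: it is Eq.~\eqref{Eig4}, which lives inside the proof of Lemma~\ref{l1}, where the authors invoke it to rule out $m_P(2\hat\lambda^2-1)>m_{\hat P}(\hat\lambda)$. Strictly speaking, the \emph{statement} of Lemma~\ref{l1} only maps nonzero eigenvalues of $\hat P$ down to eigenvalues of $P$; the existence half of Lemma~\ref{l2} genuinely needs this lifting construction, so your decision to spell it out fills in what the paper leaves implicit rather than duplicating work. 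The one substantive stylistic difference is in the verification of the old-node equations: the paper argues entrywise through the combinatorial factors of Eqs.~\eqref{Eig6}--\eqref{Eig11}, whereas you argue globally via the incidence identity $BB^\top=D+A$, hence $D^{-1/2}BB^\top D^{-1/2}=I+P$, which collapses the old-node rows in one stroke to $\tfrac12(I+P)\psi'=\hat\lambda^2\psi'$ and makes the change of variable $\lambda=2\hat\lambda^2-1$ transparent. Your bookkeeping is right (the $q$ identical new-node blocks contribute a factor of $q$ that cancels against $1/(2q)$, leaving $\frac{1}{2\hat\lambda}D^{-1/2}BB^\top D^{-1/2}\psi'$), the hypothesis $\lambda\neq-1$ is used exactly where it is needed (to ensure $\hat\lambda\neq0$ so the lift is defined), and the appeal back to Lemma~\ref{l1} for the multiplicity equalities $m_{\hat P}\bigl(\pm\sqrt{(1+\lambda)/2}\bigr)=m_P(\lambda)$ is legitimate precisely because both lifted values are nonzero eigenvalues of $\hat P$ once existence is established.
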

\begin{proof}
This is a direct consequence of Lemma~\ref{l1}.
\end{proof}

Lemmas~\ref{l1} and~\ref{l2} show that all nonzero eigenvalues and their corresponding eigenvectors  of $\hat{P}$ can be obtained from those of $P$.
For  those zero eigenvalues and their associated eigenvectors of $\hat{P}$, we can characterize them easily.   %$\psi_i^{(j)}$ represents the $j$-th component of the $i$-th eigenvector in the following theorem.

\begin{theorem}\label{Theorem2}
Let $G$ be a simple connected graph with $n$ nodes and $m$ edges. Let $1=\lambda_1>\lambda_2\geq\cdots\geq\lambda_n\geq-1$ be the eigenvalues of $P$, and let $v_1,v_2\ldots,v_n$ be their corresponding orthonormal eigenvectors. Then
\begin{enumerate}[(i)]
\item if $G$ is non-bipartite, then $\pm\sqrt{\frac{1+\lambda_i}{2}}$, $i=1,2,\ldots,n$, are eigenvalues of $\hat{P}$, and the element of  their orthonormal eigenvectors corresponding to node $j$ is
 \begin{equation*}
     \left\{
                                    \begin{array}{ll}
                                      \frac{1}{\sqrt{2}}v_{ij}, & \hbox{$j\in V$,} \\
                                      \pm\sqrt{\frac{1}{2q(1+\lambda_i)}}\left(\frac{v_{is}}{\sqrt{d_s}}+\frac{v_{it}}{\sqrt{d_t}}\right),
                                      & \hbox{$j\in V^\prime$, $\hat{\Gamma} (j)=\{s,t\}$;}
                                    \end{array}
                                  \right.
      \end{equation*}
 and $0$'s are eigenvalues of $\hat{P}$ with multiplicity $mq-n$, with  their corresponding orthonormal eigenvectors being
       \begin{equation*}
             \\ \left(
                  \begin{array}{c}
                    0 \\
                    Y_z \\
                  \end{array}
                \right) , \, z=1,2,\ldots, mq-n,
      \end{equation*}
      where $Y_1$, $Y_2$, $\ldots$, $Y_{mq-n}$ are an orthonormal basis of the kernel space of matrix
      \begin{equation*}
      C:={\underbrace{\left(
        \begin{array}{cccc}
            B & B & \cdots\ & B  \\
        \end{array}
      \right)}_q}.
      \end{equation*}
  \item if $G$ is non-bipartite, then $\pm\sqrt{\frac{1+\lambda_i}{2}}$, $i=1,2,\ldots,n-1$, are eigenvalues of $\hat{P}$, and the element of  their orthonormal eigenvectors corresponding to node $j$ is
    \begin{equation*}
     \left\{
                                    \begin{array}{ll}
                                      \frac{1}{\sqrt{2}}v_{ij}, & \hbox{$j\in V$,} \\
                                      \pm\sqrt{\frac{1}{2q(1+\lambda_i)}}\left(\frac{v_{is}}{\sqrt{d_s}}+\frac{v_{it}}{\sqrt{d_t}}\right),
                                      & \hbox{$j\in V^\prime$, $\hat{\Gamma}(j)=\{s,t\}$;}
                                    \end{array}
                                  \right.
      \end{equation*}
   and $0$'s are eigenvalues of $\hat{P}$ with multiplicity $mq-n+2$, with  their corresponding orthonormal eigenvectors being
         \begin{equation*}
             \\ \left(
                  \begin{array}{c}
                    v_n \\
                    0 \\
                  \end{array}
                \right),
              \\ \left(
                  \begin{array}{c}
                    0 \\
                    Y_z \\
                  \end{array}
                \right) , z=1,2,\ldots,mq-n+1;
      \end{equation*}
      where $Y_1$,$Y_2$, $\ldots$, $Y_{mq-n+1}$ are an orthonormal basis of the kernel space of matrix
      \begin{equation*}
      C:={\underbrace{\left(
        \begin{array}{cccc}
            B & B & \cdots\ & B  \\
        \end{array}
      \right)}_q}.
      \end{equation*}
\end{enumerate}
\end{theorem}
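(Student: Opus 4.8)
The plan is to handle the nonzero eigenvalues, which Lemmas~\ref{l1} and~\ref{l2} already control, separately from the zero eigenvalue, which requires a direct kernel analysis. By Lemma~\ref{l2}, each eigenvalue $\lambda_i\neq-1$ of $P$ contributes the pair $\pm\sqrt{(1+\lambda_i)/2}$ to the spectrum of $\hat P$ with the same multiplicity $m_P(\lambda_i)$. When $G$ is non-bipartite every $\lambda_i>-1$ by Lemma~\ref{b1}, so $i$ ranges over $1,\dots,n$; when $G$ is bipartite we have $\lambda_n=-1$, which is excluded, so $i$ runs over $1,\dots,n-1$. For the eigenvectors I recall from the proof of Lemma~\ref{l1} (Eq.~\eqref{Eig11}) that the restriction of any eigenvector of $\hat P$ for $\hat\lambda$ to the old-node set $V$ is an eigenvector of $P$ for $2\hat\lambda^2-1=\lambda_i$; I therefore take this restriction to be $\tfrac{1}{\sqrt2}v_i$ and recover the value on each new node $s$ with $\hat\Gamma(s)=\{u,v\}$ from Eq.~\eqref{Eig4}. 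Substituting $\hat P(s,u)=1/\sqrt{2qd_u}$ and $\hat\lambda=\pm\sqrt{(1+\lambda_i)/2}$ reproduces exactly the stated formula for the new-node entries.

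The one real computation in this part is normalization. The old-node block contributes $\tfrac12\|v_i\|^2=\tfrac12$, while the new-node block contributes $\frac{1}{2(1+\lambda_i)}\sum_{uv\in E}(\frac{v_{iu}}{\sqrt{d_u}}+\frac{v_{iv}}{\sqrt{d_v}})^2$, since the factor $q$ from the $q$ identical copies of each new node cancels the $q$ in the denominator. Expanding the square and using the degree-counting identity $\sum_{uv\in E}(\frac{v_{iu}^2}{d_u}+\frac{v_{iv}^2}{d_v})=\sum_w v_{iw}^2=1$ together with $2\sum_{uv\in E}\frac{v_{iu}v_{iv}}{\sqrt{d_ud_v}}=v_i^\top P v_i=\lambda_i$ shows the bracket equals $1+\lambda_i$, so the new-node block also contributes $\tfrac12$ and $\|\psi\|^2=1$. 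Orthogonality across distinct eigenvalues is automatic since $\hat P$ is symmetric; inside a degenerate eigenspace the same bilinear identity, now applied to $v_i$ and $v_j$, yields $(1+\lambda_j)\langle v_i,v_j\rangle=0$, so the constructed vectors inherit orthonormality from the $v_i$.

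For the zero eigenvalue I solve $\hat P\psi=0$ directly from the block form~\eqref{Matrix P}. Writing $\psi=({\psi'}^\top,{\psi^{(1)}}^\top,\dots,{\psi^{(q)}}^\top)^\top$, the equation splits into the two decoupled conditions $B^\top D^{-1/2}\psi'=0$ and $B(\psi^{(1)}+\dots+\psi^{(q)})=0$, the latter being the statement that the stacked vector $({\psi^{(1)}}^\top,\dots,{\psi^{(q)}}^\top)^\top$ lies in $\ker(C)$. Since $D^{-1/2}$ is invertible, the first condition has a solution space of dimension $n-\mathrm{rank}(B)$, and the second has solution space $\ker(C)$ of dimension $mq-\mathrm{rank}(C)=mq-\mathrm{rank}(B)$, because the columns of $C$ span the same space as those of $B$. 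Applying Lemma~\ref{b1a}: if $G$ is non-bipartite then $\mathrm{rank}(B)=n$ forces $\psi'=0$ and $\dim\ker(C)=mq-n$, giving precisely the vectors $(0,Y_z)^\top$; if $G$ is bipartite then $\mathrm{rank}(B)=n-1$ leaves one free dimension for $\psi'$ and $\dim\ker(C)=mq-n+1$. To pin down the surviving $\psi'$ in the bipartite case, note $B^\top D^{-1/2}\psi'=0$ says $D^{-1/2}\psi'$ lies in the left kernel of $B$, which for connected bipartite $G$ is spanned by $\mathbf{1}_{V_1}-\mathbf{1}_{V_2}$; by Eq.~\eqref{eig=-1} this forces $\psi'\propto v_n$, producing the extra eigenvector $(v_n^\top,0)^\top$, and one checks $B^\top D^{-1/2}v_n=0$ directly.

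Finally I tally the dimensions: $2n+(mq-n)=n+mq=\hat n$ when $G$ is non-bipartite and $2(n-1)+(mq-n+2)=n+mq=\hat n$ when $G$ is bipartite, so the listed vectors exhaust the spectrum and, being orthonormal, constitute a complete eigenbasis of $\hat P$. I expect the main obstacle to be the zero-eigenvalue analysis: correctly decoupling the two block conditions, fixing both dimensions through Lemma~\ref{b1a}, and recognizing the one surviving old-node vector as $v_n$ in the bipartite case. The normalization identity, though it demands some care, is otherwise a routine degree count.
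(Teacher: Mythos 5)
Your proposal is correct and follows essentially the same route as the paper: nonzero eigenvalues and eigenvectors via Lemma~\ref{l2} and Eq.~\eqref{Eig4}, and the zero eigenspace via $\operatorname{rank}(B)$ from Lemma~\ref{b1a} and the kernel of $C$, with the extra eigenvector $(v_n^\top,0)^\top$ in the bipartite case. You additionally supply details the paper leaves implicit --- the explicit normalization through the identity $\sum_{uv\in E}\bigl(\tfrac{v_{iu}}{\sqrt{d_u}}+\tfrac{v_{iv}}{\sqrt{d_v}}\bigr)^2=1+\lambda_i$, the decoupled direct solution of $\hat P\psi=0$, and the identification of $\psi'\propto v_n$ via the left kernel of $B$ --- where the paper merely states the vectors ``can be orthonormalized'' and that the kernel vectors are ``easy to confirm.''
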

\begin{proof}
We first prove (i). Since $G$ is non-bipartite, by Lemma~\ref{b1}, every  eigenvalue $\lambda_i$ of $P$ is not equal to $-1$. According to Lemma~\ref{l2}, we can obtain all nonzero eigenvalues $\pm\sqrt{\frac{1+\lambda_i}{2}}$ of $\hat{P}$ and their multiplicity. Moreover, using Eq.~\eqref{Eig4}, the element of their  associated  eigenvectors corresponding to node $j$ is
\begin{equation*}
    \left\{
                                    \begin{array}{ll}
                                      v_{ij}, & \hbox{$j\in V$,} \\
                                      \pm\sqrt{\frac{1}{q(1+\lambda_i)}}\left(\frac{v_{is}}{\sqrt{d_s}}+\frac{v_{it}}{\sqrt{d_t}}\right),
                                      & \hbox{$j\in V^\prime$, $\hat{\Gamma}(j)=\{s,t\}$,}
                                    \end{array}
                                  \right.
      \end{equation*}
which can be be orthonormalized to obtain the orthonormal eigenvectors.

%We have extensively known $2n$ eigenvalues and eigenvectors of $\hat{P}$. The rest of the eigenvalues are supposed to be $0$s.
For the zero eigenvalues, from Lemma~\ref{b1a}, ${\rm rank}(B)=n$ since $G$ is non-bipartite. Thus, ${\rm rank}(C)=n$ and $\dim({\rm Ker}(C))=mq-n$. Let $Y_1$, $Y_2$, $\ldots$, $Y_{mq-n}$ be an orthonormal basis of the kernel space of matrix $C$. It is easy to confirm  that
     $\left(
         \begin{array}{c}
                    0 \\
                    Y_z \\
         \end{array}
     \right)$, $z=1,2,\ldots, mq-n$, are eigenvectors for eigenvalues $0$ of matrix $\hat{P}$.

For (ii), our proof is similar. We just need to verify that
\begin{equation*}
             \\ \hat{P}\left(
                  \begin{array}{c}
                    v_n \\
                    0 \\
                  \end{array}
                \right)=O_{(n+mq)\times1},
\end{equation*}
which is trivial according to Eq.~\eqref{Matrix P}.
\end{proof}

Note that when $q=1$, Theorem~\ref{Theorem2} coincides with result in~\cite{XiZhC16a}.

\section{Hitting Times for Random Walks on $q$-subdivision Graphs}

Theorem~\ref{Theorem2} provides  complete information about the eigenvalues and eigenvectors of $\hat{P}$ in terms of those $P$. In this section, we use this information to determine two-node hitting time and Kemeny constant for unbiased random walks on $S_q(G)$.

\subsection{Two-Node Hitting Time}

We first compute the hitting time from one node to another in $S_q(G)$. To this end, we  express the orthonormal eigenvectors of $S_q(G)$ in more explicit forms. By Eqs.~(\ref{eig=1})~(\ref{eig=-1}) and Theorem~\ref{Theorem2}, we can directly derive the following results.

\begin{enumerate}[(i)]
  \item The eigenvectors corresponding to eigenvalues  $\pm\sqrt{\frac{1+\lambda_1}{2}}=\pm1$ for matrix $\hat{P}$ are
  \begin{equation}\label{re1A}
    \left(\sqrt{\frac{d_1}{4m}},\cdots,\sqrt{\frac{d_n}{4m}},
      \sqrt{\frac{1}{2mq}},\cdots,\sqrt{\frac{1}{2mq}} \right)^\top
  \end{equation}
  and
  \begin{equation}\label{re1B}
         \left(\sqrt{\frac{d_1}{4m}},\cdots,\sqrt{\frac{d_n}{4m}},
      -\sqrt{\frac{1}{2mq}},\cdots,-\sqrt{\frac{1}{2mq}}\right)^\top,
  \end{equation}
  respectively.
    \item If $G$ is non-bipartite, for each $j\in V^\prime$ with $\hat{\Gamma}(j)=\{s,t\}$,
    \begin{equation}\label{sum1}
        \sum_{z=1}^{mq-n} Y^2_{zj}=1-\frac{1}{mq}-\sum_{k=2}^n \frac{1}{(1+\lambda_k)q}\left(\frac{v_{ks}}{\sqrt{d_s}}+\frac{v_{kt}}{\sqrt{d_t}}\right)^2 .
    \end{equation}

  \item If $G$ is bipartite,  for each $j \in V^\prime$ with $\hat{\Gamma}(j)=\{s,t\}$,
    \begin{equation}\label{sum2}
        \sum_{z=1}^{mq-n+1} Y^2_{zj}=1-\frac{1}{mq}-\sum_{k=2}^{n-1} \frac{1}{(1+\lambda_k)q}\left(\frac{v_{ks}}{\sqrt{d_s}}+\frac{v_{kt}}{\sqrt{d_t}}\right)^2.
    \end{equation}
\end{enumerate}

Now we present our results for hitting times of random walks on $S_q(G)$.
\begin{theorem}\label{HT}
Let $G$ be a simple connected graph with $n$ nodes and $m$ edges. $S_q(G)$ is the $q$-subdivision graph of $G$ with $\hat{V}=V\cup{V^\prime}$. Then
    \begin{enumerate}[(i)]
        \item if $i$, $j\in V$, then $\hat{T}_{ij}= 4 T_{ij}$;
        \item if $i\in V^\prime$, $j\in V$, $\hat{\Gamma}(i)=\{s,t\}$, then
            \begin{equation*}
                \begin{aligned}
                     \hat{T}_{ij}&= 1+2(T_{sj}+T_{tj});\\
                     \hat{T}_{ji}&= 2mq-1+2(T_{js}+T_{jt})-(T_{ts}+T_{st});
                \end{aligned}
            \end{equation*}
        \item if $i$, $j\in V^\prime$, $\hat{\Gamma}(i)=\{s,t\}$, $\hat{\Gamma}(j)=\{u,v\}$, then
            \begin{equation*}
                \hat{T}_{ij}= 2mq+T_{su}+T_{tu}+T_{sv}+T_{tv}-(T_{uv}+T_{vu}).
            \end{equation*}
    \end{enumerate}
%where $\hat{T}_{ij}$ denotes hitting times on graph $S_q(G)$, while $T_{ij}$ denotes $G$'s.
\end{theorem}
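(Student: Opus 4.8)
The plan is to apply the bipartite hitting-time formula of Theorem~\ref{Theorem1} directly to $S_q(G)$ and substitute the eigenvalues and eigenvectors supplied by Theorem~\ref{Theorem2}. Since $S_q(G)$ is bipartite with parts $V$ and $V'$ and has $\hat m=2mq$ edges, for any two nodes $a,b$ of $S_q(G)$ I would write
$$\hat T_{ab}=4mq\sum_{\hat\lambda\neq\pm1}\frac{1}{1-\hat\lambda}\Big(\frac{\psi_{b}^{2}}{\hat d_{b}}-\frac{\psi_{a}\psi_{b}}{\sqrt{\hat d_{a}\hat d_{b}}}\Big)+c_{ab},$$
where $\psi$ ranges over an orthonormal eigenbasis of $\hat P$ with eigenvalue $\hat\lambda\notin\{1,-1\}$, $\psi_a$ is its entry at $a$, and $c_{ab}=1$ if $a,b$ lie in different parts and $c_{ab}=0$ otherwise. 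The relevant eigenvalues are $\pm\sqrt{(1+\lambda_k)/2}$ for $k\ge2$ together with $0$; their eigenvector entries are read off from Theorem~\ref{Theorem2}, and I abbreviate $\alpha_k=\sqrt{1/(2q(1+\lambda_k))}$ and, for a new node with neighbours $\{s,t\}$, the combination $\frac{v_{ks}}{\sqrt{d_s}}+\frac{v_{kt}}{\sqrt{d_t}}$.

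First I would pair the eigenvalues $+\sqrt{(1+\lambda_k)/2}$ and $-\sqrt{(1+\lambda_k)/2}$, whose eigenvectors coincide on $V$ and are negatives of each other on $V'$. Using $\frac{1}{1-\hat\lambda}+\frac{1}{1+\hat\lambda}=\frac{4}{1-\lambda}$ and $\frac{1}{1-\hat\lambda}-\frac{1}{1+\hat\lambda}=\frac{4\hat\lambda}{1-\lambda}$ with $\hat\lambda=\sqrt{(1+\lambda)/2}$, together with $\alpha_k\sqrt{(1+\lambda_k)/2}=1/(2\sqrt q)$, each paired contribution collapses to a single term weighted by $1/(1-\lambda_k)$. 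In case (i) the destination lies in $V$, where the kernel eigenvectors vanish (for non-bipartite $G$), so only the paired terms survive and, using $\hat d=qd$, the sum becomes $8m\sum_{k\ge2}\frac{1}{1-\lambda_k}\big(\frac{v_{kj}^2}{d_j}-\frac{v_{ki}v_{kj}}{\sqrt{d_id_j}}\big)=4T_{ij}$ by Lovász's formula for $G$. For $\hat T_{ij}$ in case (ii) the destination $j$ is again in $V$, the kernel drops out, each paired cross term splits as $\frac{v_{ks}v_{kj}}{\sqrt{d_sd_j}}+\frac{v_{kt}v_{kj}}{\sqrt{d_td_j}}$, and I recognise $2T_{sj}+2T_{tj}$, plus the parity correction $c_{ij}=1$.

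The remaining cases — $\hat T_{ji}$ in (ii) and case (iii) — have their destination in $V'$, so the kernel eigenvectors $(0,Y_z)^\top$ now contribute, and this is where the real work lies. The diagonal kernel sums $\sum_z Y_{zi}^2$ are already furnished by Eqs.~\eqref{sum1}/\eqref{sum2}, but case (iii) additionally needs the off-diagonal sum $\sum_z Y_{zi}Y_{zj}$ for two distinct new nodes, which is not listed. I would produce it from completeness of the full orthonormal eigenbasis of $\hat P$: summing $\psi_i\psi_j$ over all eigenvectors gives $\delta_{ij}$, and subtracting the contributions of $\pm1$ (entries $\pm\sqrt{1/(2mq)}$ on $V'$, by \eqref{re1A}--\eqref{re1B}) and of $\pm\sqrt{(1+\lambda_k)/2}$ yields
$$\sum_z Y_{zi}Y_{zj}=\delta_{ij}-\frac{1}{mq}-\sum_{k=2}^n\frac{1}{q(1+\lambda_k)}\Big(\tfrac{v_{ks}}{\sqrt{d_s}}+\tfrac{v_{kt}}{\sqrt{d_t}}\Big)\Big(\tfrac{v_{ku}}{\sqrt{d_u}}+\tfrac{v_{kv}}{\sqrt{d_v}}\Big),$$
the natural off-diagonal extension of \eqref{sum1}. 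Feeding this in, the kernel piece carries weight $\frac{1}{1+\lambda_k}$ while the paired half-integer eigenvalues carry $\frac{1}{1-\lambda_k^2}$, and the crucial cancellation $\frac{4m}{1-\lambda_k^2}-\frac{2m}{1+\lambda_k}=\frac{2m}{1-\lambda_k}$ merges them into sums of the Lovász form. A short bookkeeping with $Q_{ab}:=\sum_{k\ge2}\frac{1}{1-\lambda_k}\frac{v_{ka}v_{kb}}{\sqrt{d_ad_b}}$ (so that $T_{ab}=2m(Q_{bb}-Q_{ab})$) then matches the combinations $2(T_{js}+T_{jt})-(T_{ts}+T_{st})$ and $T_{su}+T_{tu}+T_{sv}+T_{tv}-(T_{uv}+T_{vu})$, the leading $2mq$ and the constant $\pm1$ arising from the $\frac1{mq}$, $\delta_{ij}$ and $c$ terms. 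The main obstacle is precisely this off-diagonal kernel identity together with the above cancellation; the only additional care is the bipartite-$G$ subcase, where the extra kernel vector $(v_n,0)^\top$ and the part-dependent $+1$'s in $G$'s own hitting times must be tracked, but these are checked to cancel and reproduce the same formulas.
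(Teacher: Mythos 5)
Your proposal is correct, but it takes a genuinely more uniform (and heavier) route than the paper. The paper invokes Theorem~\ref{Theorem1} with the spectral data of Theorem~\ref{Theorem2} only twice: for case (i) and for $\hat{T}_{ji}$ (old node to new node) in case (ii), where only the \emph{diagonal} kernel sums \eqref{sum1}/\eqref{sum2} are needed. It then dispatches the remaining two formulas by one-step conditioning at the degree-$2$ new node, $\hat{T}_{ij}=1+\tfrac{1}{2}\bigl(\hat{T}_{sj}+\hat{T}_{tj}\bigr)$, which reduces the new-to-old part of (ii) to case (i), and case (iii) to the already-proved old-to-new formula. This sidesteps entirely the one genuinely new ingredient your route requires: the off-diagonal kernel sum $\sum_z Y_{zi}Y_{zj}$ for two distinct new nodes. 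Your derivation of that identity from completeness of the orthonormal eigenbasis of $\hat{P}$ (subtracting the $\pm1$ contributions read off \eqref{re1A}--\eqref{re1B} and the paired $\pm\sqrt{(1+\lambda_k)/2}$ contributions) is valid, being the natural off-diagonal extension of \eqref{sum1}, and your cancellation $\frac{4m}{1-\lambda_k^2}-\frac{2m}{1+\lambda_k}=\frac{2m}{1-\lambda_k}$ correctly merges the paired and kernel pieces into sums of Lov\'{a}sz form; expanding the products $\bigl(\frac{v_{ks}}{\sqrt{d_s}}+\frac{v_{kt}}{\sqrt{d_t}}\bigr)\bigl(\frac{v_{ku}}{\sqrt{d_u}}+\frac{v_{kv}}{\sqrt{d_v}}\bigr)$ does reproduce $2mq+T_{su}+T_{tu}+T_{sv}+T_{tv}-(T_{uv}+T_{vu})$ exactly, including the degenerate case $\hat{\Gamma}(i)=\hat{\Gamma}(j)$. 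You also correctly flag the bipartite-$G$ subtlety that the paper handles only implicitly: the extra kernel vector $\bigl(v_n^\top,0\bigr)^\top$ does not vanish on $V$, and its contribution supplies exactly the constant needed in case (i) to absorb the part-dependent $+1$'s in $G$'s own hitting-time formula. In trade: the paper's conditioning trick is shorter and avoids most of the eigenvector bookkeeping, while your uniform spectral treatment verifies each case independently (a useful cross-check of Theorem~\ref{Theorem2}) and yields the off-diagonal kernel identity as a by-product of independent interest.
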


\begin{proof}
Note that $\hat{m}=2qm$, $\hat{d}_i=qd_i$ if $i\in V$, and $\hat{d}_i=2$ if $i\in V^\prime$.

We first prove (i).  We distinguish two cases:  (a)   $G$ is a non-bipartite graph, and   (b) $G$ is a  bipartite graph.  %We only prove the first case, since the second case is similar.
When $G$  is a non-bipartite graph,  by Theorems~\ref{Theorem1} and~\ref{Theorem2}, we have
\begin{equation*}
    \begin{aligned}
         \hat{T}_{ij} = &   2\hat{m}\Bigg(\sum_{k=2}^{n}
                                \bigg(\frac{1}{1-\sqrt{\frac{1+\lambda_k}{2}}}
                                +\frac{1}{1+\sqrt{\frac{1+\lambda_k}{2}}}\bigg)
                                \bigg(\frac{v_{kj}^2}{2qd_j}-\frac{v_{ki}v_{kj}}{2q\sqrt{d_id_j}}\bigg)\Bigg)\\
                          = &   8m\sum_{k=2}^n\frac{1}{1-\lambda_k}
                                \bigg(\frac{v_{kj}^2}{d_j}-\frac{v_{ki}v_{kj}}{\sqrt{d_id_j}}\bigg)
                          =     4T_{ij}.
    \end{aligned}
\end{equation*}
When $G$ is a non-bipartite graph, the proof is similar.  Thus (i) is proved.
%If $G$ is non-bipartite, then by Theorems~\ref{Theorem1},~\ref{Theorem2} and Eq.(\ref{re1}), we have
%\begin{equation*}
%    \begin{aligned}
%        \hat{T}_{ij}  = &   8m\sum_{k=2}^{n-1}\frac{1}{1-\lambda_k}
%                                \Big(\frac{v_{kj}^2}{d_j}-\frac{v_{ki}v_{kj}}{\sqrt{d_id_j}}\Big)
%                                +4mq\Big(\frac{v_{nj}^2}{qd_j}-\frac{v_{nj}v_{ni}}{q\sqrt{d_id_j}}\Big)\\
%                          = &   8m\sum_{k=2}^n\frac{1}{1-\lambda_k}
%                                \Big(\frac{v_{kj}^2}{d_j}-\frac{v_{ki}v_{kj}}{\sqrt{d_id_j}}\Big)
%                          =     4T_{ij}.
%    \end{aligned}
%\end{equation*}
%where the next-to-last equality is obtained by using the fact that $\lambda_n=-1$.

We continue to prove (ii). Since $\hat{\Gamma}(i)=\{s,t\}$,
\begin{equation*}
    \hat{T}_{ij}=1+\frac{1}{2}\left(\hat{T}_{sj}+\hat{T}_{tj}\right)=1+2(T_{sj}+T_{tj}).
\end{equation*}
While for $\hat{T}_{ji}$, we also divide it into two cases:  (a)   $G$  is a non-bipartite graph, and   (b) $G$ is a  bipartite graph.  For the first case that  $G$ is non-bipartite,  by Theorem~\ref{Theorem1} and Eqs.~(\ref{re1A}) and~(\ref{sum1}), we have
\begin{equation*}
    \begin{aligned}
        \hat{T}_{ji}=   &   1+2\hat{m}\Bigg(\sum_{k=2}^n
                                \bigg(\frac{1}{1-\sqrt{\frac{1+\lambda_k}{2}}}
                                +\frac{1}{1+\sqrt{\frac{1+\lambda_k}{2}}}\bigg)
                                \frac{1}{4q(1+\lambda_k)}\bigg(\frac{v_{ks}}{\sqrt{d_s}}+\frac{v_{kt}}{\sqrt{d_t}}\bigg)^2
                    \\      &   -\sum_{k=2}^n\bigg(\frac{1}{1-\sqrt{\frac{1+\lambda_k}{2}}}
                                -\frac{1}{1+\sqrt{\frac{1+\lambda_k}{2}}}\bigg)
                    \\      &   \times\frac{v_{kj}}{2q\sqrt{2(1+\lambda_k)d_j}}
                               \bigg (\frac{v_{ks}}{\sqrt{d_s}}+\frac{v_{kt}}{\sqrt{d_t}}\bigg)
                                +\sum_{z=1}^{mq-n}\frac{Y_{zi}^2}{2}\Bigg)
                    \\  =   &   1+4mq\Bigg(\sum_{k=2}^n\frac{1}{q(1-\lambda_k)(1+\lambda_k)}
                                \bigg(\frac{v_{ks}}{\sqrt{d_s}}+\frac{v_{kt}}{\sqrt{d_t}}\bigg)^2
                                -\sum_{k=2}^n\frac{1}{q(1-\lambda_k)}
                    \\      &   \bigg(\frac{v_{ks}v_{kj}}{\sqrt{d_sd_j}}+\frac{v_{kt}v_{kj}}{\sqrt{d_td_j}}\bigg)
                                +\frac{1}{2}\Bigg(1-\frac{1}{mq}
                                -\sum_{k=2}^n\frac{1}{q(1+\lambda_k)}
                                \bigg(\frac{v_{ks}}{\sqrt{d_s}}+\frac{v_{kt}}{\sqrt{d_t}}\bigg)^2\Bigg)\Bigg)
                    \\  =   &   2mq-1+4m\sum_{k=2}^n\frac{1}{1-\lambda_k}
                                \Bigg(\bigg(\frac{v_{ks}^2}{d_s}-\frac{v_{ks}v_{kj}}{\sqrt{d_sd_j}}\bigg)
                                +\bigg(\frac{v_{kt}^2}{d_t}-\frac{v_{kt}v_{kj}}{\sqrt{d_td_j}}\bigg)
                    \\      &   -\frac{1}{2}\bigg(\frac{v_{ks}}{\sqrt{d_s}}-\frac{v_{kt}}{\sqrt{d_t}}\bigg)^2\Bigg)
                                =2mq-1+2(T_{js}+T_{jt})-(T_{ts}+T_{st}).
    \end{aligned}
\end{equation*}
If $G$ is bipartite, our proof is similar. %We just need to notice that $\lambda_n=-1$.

We finally prove (iii). Considering $\hat{\Gamma}(i)=\{s,t\}$ and $\hat{\Gamma}(j)=\{u,v\}$,  we obtain
\begin{equation*}
    \begin{aligned}
        \hat{T}_{ij}    =   &   1+\frac{1}{2}\left(\hat{T}_{sj}+\hat{T}_{tj}\right)
                    \\      =   &   1+\frac{1}{2}\Big(2mq-1+2(T_{su}+T_{sv})-(T_{uv}+T_{vu})
                    \\          &   +2mq-1+2(T_{tu}+T_{tv})-(T_{uv}+T_{vu})\Big)
                    \\      =   &   2mq+T_{su}+T_{tu}+T_{sv}+T_{tv}-(T_{uv}+T_{vu}).
    \end{aligned}
\end{equation*}
This completes the proof.
\end{proof}

\subsection{Kemeny's Constant}

In addition to the two-node hitting time, the  Kemeny's constant of  $S_q(G)$ can also be expressed in terms of that of $G$.
\begin{theorem}\label{conKem}
Let $G$ be a simple connected graph with $n$ nodes and $m$ edges, and let $S_q(G)$ be the $q$-subdivision graph. Then
\begin{equation*}
K(S_q(G))=4K(G)+\frac{2mq-2n+1}{2}.
\end{equation*}
\end{theorem}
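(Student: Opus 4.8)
The plan is to feed the complete eigenvalue list of $\hat{P}$ from Theorem~\ref{Theorem2} directly into the spectral formula for the Kemeny constant, Lemma~\ref{lemmaKem}. Abbreviating $\mu_i=\sqrt{\frac{1+\lambda_i}{2}}$, the largest eigenvalue of $\hat{P}$ is $\mu_1=1$ (inherited from $\lambda_1=1$), and Lemma~\ref{lemmaKem} expresses $K(S_q(G))$ as the sum of $\frac{1}{1-\hat\lambda}$ over every eigenvalue $\hat\lambda$ of $\hat{P}$ other than this single top eigenvalue. First I would split the eigenvalues of $\hat{P}$ into the positive branch $+\mu_i$, the negative branch $-\mu_i$, and the zero eigenvalues, exactly as enumerated in Theorem~\ref{Theorem2}.

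The key algebraic step is the pairing identity
\begin{equation*}
\frac{1}{1-\mu_i}+\frac{1}{1+\mu_i}=\frac{2}{1-\mu_i^2}=\frac{4}{1-\lambda_i},
\end{equation*}
where the last equality uses $1-\mu_i^2=\frac{1-\lambda_i}{2}$. Summing this over the paired indices reproduces the factor $4K(G)$ through Lemma~\ref{lemmaKem}, while the unpaired negative-branch term $\frac{1}{1+\mu_1}=\frac{1}{2}$ (the partner of the excluded top eigenvalue) and the $\frac{1}{1-0}=1$ contribution of each zero eigenvalue together furnish the additive constant.

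Then I would carry out the count in the two cases of Theorem~\ref{Theorem2}. In the non-bipartite case the pairs run over $i=2,\ldots,n$ and there are $mq-n$ zero eigenvalues, so
\begin{equation*}
K(S_q(G))=\sum_{i=2}^{n}\frac{4}{1-\lambda_i}+\frac{1}{2}+(mq-n)=4K(G)+\frac{2mq-2n+1}{2}.
\end{equation*}
In the bipartite case $\lambda_n=-1$, hence $\mu_n=0$ and the pairs run only to $i=n-1$; the term $\frac{1}{1-\lambda_n}=\frac{1}{2}$ is now absorbed into the $mq-n+2$ zero eigenvalues, so $\sum_{i=2}^{n-1}\frac{4}{1-\lambda_i}=4\left(K(G)-\frac{1}{2}\right)$, and collecting $4K(G)-2+\frac{1}{2}+(mq-n+2)$ again yields $4K(G)+\frac{2mq-2n+1}{2}$.

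The computation is essentially routine, so the only real obstacle is the bookkeeping that distinguishes the two cases: one must make sure that the eigenvalue $\lambda_n=-1$ of $P$, present precisely when $G$ is bipartite, is correctly shifted from a $\pm\mu$ pair into the zero block of $\hat{P}$, and that exactly one copy of the top eigenvalue $1$ is removed from the Kemeny sum. Once these are tracked consistently, both cases collapse to the same closed form, which is the claimed identity.
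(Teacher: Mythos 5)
Your proposal is correct and follows essentially the same route as the paper: both feed the spectrum of $\hat{P}$ from Theorem~\ref{Theorem2} into Lemma~\ref{lemmaKem} and use the pairing identity $\frac{1}{1-\mu_i}+\frac{1}{1+\mu_i}=\frac{4}{1-\lambda_i}$, with the unpaired $-1$ contributing $\frac{1}{2}$ and each zero eigenvalue contributing $1$. The only difference is that you carry out the bipartite bookkeeping explicitly (correctly shifting the $\lambda_n=-1$ pair into the zero block and compensating with $\sum_{i=2}^{n-1}\frac{4}{1-\lambda_i}=4K(G)-2$), whereas the paper dismisses that case as ``similar.''
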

\begin{proof}
Suppose that $1=\lambda_1>\lambda_2\geq...\geq\lambda_n\geq-1$ are the eigenvalues of the matrix $P$. We first consider the case that $G$ is a non-bipartite graph. For this case, by Lemma~\ref{lemmaKem} and Theorem~\ref{Theorem2}, we have
\begin{equation*}
    \begin{aligned}
        K(S_q(G))   =&      \sum_{k=2}^n
                            \Bigg(\frac{1}{1-\sqrt{\frac{1+\lambda_k}{2}}}
                            +\frac{1}{1+\sqrt{\frac{1+\lambda_k}{2}}}\Bigg)
                            +\frac{1}{2}+mq-n\\
                    =&      4K(G)+\frac{2mq-2n+1}{2}.
    \end{aligned}
\end{equation*}
For the other case that $G$ is bipartite, we can prove similarly.
\end{proof}

\section{Resistance Distances of $q$-subdivision Graphs }

In this section, we determine the two-node resistance distance, multiplicative degree-Kirchhoff index, additive degree-Kirchhoff index, and  Kirchhoff index of  $S_q(G)$, in terms of those of  $G$.

\subsection{Two-Node Resistance Distance }

We first determine  the resistance distance between any pair of nodes in $S_q(G)$.
\begin{theorem}\label{con1}
Let $G$ be a simple connected graph with $n$ nodes and $m$ edges, and let $S_q(G)$ be the $q$-subdivision graph of  $G$ with node set $\hat{V}=V\cup{V^\prime}$. Then
\begin{enumerate}[(1)]
  \item for $i$, $j \in V$, $$\hat{r}_{ij}=\frac{2}{q}r_{ij};$$
  \item for $i\in V^\prime$, $j\in V$ and $\hat{\Gamma}(i)=\{s,t\}$,
    \begin{equation*}
    \hat{r}_{ij}=\frac{1}{2}+\frac{2r_{sj}+2r_{tj}-r_{st}}{2q};
    \end{equation*}
  \item for $i$, $j \in V^\prime$, $\hat{\Gamma}(i)=\{s,t\}$ and $\hat{\Gamma}(j)=\{u,v\}$,
    \begin{equation*}
    \hat{r}_{ij}=1+\frac{r_{su}+r_{tu}+r_{sv}+r_{tv}-r_{st}-r_{uv}}{2q}.
    \end{equation*}
\end{enumerate}
%where $\hat{r}_{ij}(res.~r_{ij})$ denotes the resistance distance of $S_q(G)(res.~G)$.
\end{theorem}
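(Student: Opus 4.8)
The cleanest route is to avoid re-entering the spectral calculation of Lemma~\ref{lemmaRij} altogether and instead piggyback on the hitting times already computed in Theorem~\ref{HT}. The bridge is Lemma~\ref{lemmaR}, which for any graph relates resistance distance to hitting times via $2m\,r_{ij}=T_{ij}+T_{ji}$. Applying this identity to $S_q(G)$, whose edge count is $\hat m=2mq$, gives
\begin{equation*}
4mq\,\hat r_{ij}=\hat T_{ij}+\hat T_{ji},
\end{equation*}
so each $\hat r_{ij}$ is obtained by simply summing the two directed hitting times from Theorem~\ref{HT} and dividing by $4mq$. On the right-hand side the asymmetric pieces will regroup, and I then re-apply Lemma~\ref{lemmaR} \emph{on $G$} in the reverse direction, replacing every occurrence of $T_{ab}+T_{ba}$ by $2m\,r_{ab}$, to express everything purely in terms of resistance distances of $G$.

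\textbf{The three cases.} For (1), with $i,j\in V$ Theorem~\ref{HT}(i) gives $\hat T_{ij}=4T_{ij}$ and $\hat T_{ji}=4T_{ji}$, so $\hat T_{ij}+\hat T_{ji}=4(T_{ij}+T_{ji})=8m\,r_{ij}$, and dividing by $4mq$ yields $\hat r_{ij}=\tfrac{2}{q}r_{ij}$. For (2), with $i\in V'$, $\hat\Gamma(i)=\{s,t\}$ and $j\in V$, I add the two expressions in Theorem~\ref{HT}(ii): the $+1$ and $-1$ cancel, the term $2mq$ survives, and the directed terms pair up as $(T_{sj}+T_{js})$ and $(T_{tj}+T_{jt})$ with coefficient $2$, while $T_{st}+T_{ts}$ appears with coefficient $-1$. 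Converting these into $2m\,r_{sj}$, $2m\,r_{tj}$, $2m\,r_{st}$ and dividing by $4mq$ produces exactly $\tfrac12+\tfrac{2r_{sj}+2r_{tj}-r_{st}}{2q}$. Case (3) is the same bookkeeping with $\hat\Gamma(i)=\{s,t\}$, $\hat\Gamma(j)=\{u,v\}$: adding the two copies of Theorem~\ref{HT}(iii) gives $4mq$ from the two constants, the four cross terms assemble into $2m(r_{su}+r_{tu}+r_{sv}+r_{tv})$, and $-(T_{uv}+T_{vu})-(T_{st}+T_{ts})$ becomes $-2m(r_{uv}+r_{st})$, delivering the stated formula after division by $4mq$.

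\textbf{Main point to watch.} There is no genuine obstacle here; the one place demanding care is the evaluation of the \emph{reverse} hitting time $\hat T_{ji}$ in case~(3). Theorem~\ref{HT}(iii) as stated designates $\{s,t\}$ as the neighbour set of the \emph{source}; to obtain $\hat T_{ji}$ one must swap the roles, so that $\{u,v\}$ plays the source-neighbour part and $\{s,t\}$ the target-neighbour part. Getting this substitution right is what makes the $-(T_{uv}+T_{vu})$ and $-(T_{st}+T_{ts})$ terms both appear and guarantees the symmetry $\hat r_{ij}=\hat r_{ji}$ of the final answer. A similar remark applies in case~(2), where $\hat T_{ij}$ and $\hat T_{ji}$ are individually asymmetric but their sum is symmetric, which is precisely what Lemma~\ref{lemmaR} requires. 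One may optionally verify case~(1) independently through the series-parallel structure of the subdivided paths as a sanity check, but the hitting-time route already settles all three cases with only elementary algebra.
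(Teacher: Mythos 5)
Your proposal is correct and is precisely the paper's own route: the paper proves Theorem~\ref{con1} in one line by combining Lemma~\ref{lemmaR} (applied to $S_q(G)$ with $\hat m=2mq$, then in reverse on $G$) with the hitting times of Theorem~\ref{HT}, exactly as you do. Your worked-out bookkeeping, including the role swap needed for $\hat T_{ji}$ in case~(3), simply makes explicit what the paper leaves to the reader.
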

\begin{proof}
The results follow directly from Lemma~\ref{lemmaR} and Theorem~\ref{HT}.
\end{proof}

\subsection{Some Intermediary Results}

In the next  subsections, we will derive the Kirchhoff index, the additive degree-Kirchhoff index and the multiplicative degree-Kirchhoff index for $S_q(G)$. In the computation of the first two graph invariants, we need  the following two properties for resistance distance in $S_q(G)$.
\begin{lemma}\label{afore1}
Let $G$ be a simple connected graph with $n$ nodes and $m$ edges, and let $S_q(G)$ be the $q$-subdivision graph of  $G$ with node set $\hat{V}=V\cup{V^\prime}$. Then
\begin{equation*}
    \sum_{i\in V^\prime}\sum_{j\in V}\hat{r}_{ij}=
    \bar{\mathcal{K}}(G)+\frac{mnq-n^2+n}{2}.
\end{equation*}
\end{lemma}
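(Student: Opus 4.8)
The plan is to substitute the explicit formula for $\hat r_{ij}$ furnished by part (2) of Theorem~\ref{con1} and then reorganize the resulting double sum so that it collapses onto quantities of $G$ alone. First I would exploit the structure of $V^\prime$: by construction every new node $i\in V^\prime$ has $\hat\Gamma(i)=\{s,t\}$ for a unique edge $st\in E(G)$, and each edge $st$ produces exactly $q$ such nodes, one in each copy $V^{(1)},\ldots,V^{(q)}$. Hence the outer sum $\sum_{i\in V^\prime}$ can be rewritten as $q\sum_{st\in E(G)}$, and inserting $\hat r_{ij}=\tfrac12+\tfrac{2r_{sj}+2r_{tj}-r_{st}}{2q}$ turns the left-hand side into $q\sum_{st\in E(G)}\sum_{j\in V}\big(\tfrac12+\tfrac{2r_{sj}+2r_{tj}-r_{st}}{2q}\big)$, where every $r$ now refers only to resistance distances in $G$.

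Next I would split this into three pieces. The constant term $\tfrac12$ contributes $\tfrac{q}{2}\cdot m\cdot n=\tfrac{mnq}{2}$. The term involving $r_{st}$ contributes $-\tfrac12\sum_{st\in E(G)}\sum_{j\in V}r_{st}=-\tfrac{n}{2}\sum_{st\in E(G)}r_{st}$, which Foster's theorem (Lemma~\ref{Foster}) evaluates to $-\tfrac{n(n-1)}{2}$. The remaining piece is $\tfrac12\sum_{st\in E(G)}\sum_{j\in V}2(r_{sj}+r_{tj})$, and here lies the key observation: when the sum over edges is carried out, each vertex $k$ occurs as an endpoint of $st$ exactly $d_k$ times, so $\sum_{st\in E(G)}(r_{sj}+r_{tj})=\sum_{k\in V}d_k\,r_{kj}$. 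Summing over $j$ and using $r_{kj}=r_{jk}$ gives $\sum_{k,j}d_k r_{kj}$, which by the symmetry argument $\bar{\mathcal{K}}(G)=\tfrac12\sum_{i,j}(d_i+d_j)r_{ij}=\sum_{i,j}d_i r_{ij}$ equals exactly $\bar{\mathcal{K}}(G)$; thus this piece contributes $\bar{\mathcal{K}}(G)$.

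Adding the three contributions yields $\bar{\mathcal{K}}(G)+\tfrac{mnq}{2}-\tfrac{n(n-1)}{2}=\bar{\mathcal{K}}(G)+\tfrac{mnq-n^2+n}{2}$, as claimed. I expect the main obstacle to be the bookkeeping in the edge-to-endpoint conversion, namely recognizing that summing $r_{sj}+r_{tj}$ over all edges reproduces the degree-weighted sum that defines the additive degree-Kirchhoff index. Once that identity and the $q$-fold multiplicity of $V^\prime$ are in hand, the remaining steps rely only on Theorem~\ref{con1} and Foster's theorem and are purely computational.
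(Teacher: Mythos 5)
Your proof is correct, and it takes a genuinely different (and cleaner) decomposition than the paper's. The paper splits the double sum into adjacent pairs ($j\in\hat{\Gamma}(i)$) and non-adjacent pairs: the adjacent part is evaluated by applying Foster's theorem (Lemma~\ref{Foster}) to $S_q(G)$ itself, giving $mq+n-1$, and only the non-adjacent part is handled with the formula of Theorem~\ref{con1}(2), which forces extra bookkeeping (the $\tfrac{n+2}{2}r_{st}$ correction absorbing the excluded endpoints, plus a second application of Foster's theorem). You instead apply Theorem~\ref{con1}(2) uniformly over all $j\in V$, eliminating the case split entirely. This is legitimate: the theorem as stated carries no restriction $j\notin\{s,t\}$, and its derivation via the hitting-time formulas (Theorem~\ref{HT} together with Lemma~\ref{lemmaR}, valid for any pair $i\neq j$) covers adjacent pairs; one can also verify directly that for $j=s$ the formula yields $\hat{r}_{is}=\tfrac{1}{2}+\tfrac{r_{st}}{2q}$, consistent with a series--parallel computation, and that summing your formula over all adjacent pairs recovers the paper's Foster count, $q\sum_{st\in E}\bigl(1+\tfrac{r_{st}}{q}\bigr)=mq+n-1$. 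Since the validity of the formula at $j\in\{s,t\}$ is precisely where your route and the paper's diverge, a written-up version should devote a sentence to it. Beyond that, both arguments rest on the same two ingredients --- Foster's theorem on $G$ and the endpoint-counting identity $\sum_{st\in E}(r_{sj}+r_{tj})=\sum_{k\in V}d_k r_{kj}$, whence $\sum_{j,k}d_k r_{kj}=\bar{\mathcal{K}}(G)$ (the paper's Eq.~(\ref{A1_4}) in $r_s$ notation) --- and your remaining bookkeeping, $\tfrac{mnq}{2}$ from the constant term and $-\tfrac{n(n-1)}{2}$ from the $r_{st}$ term, sums to the stated $\bar{\mathcal{K}}(G)+\tfrac{mnq-n^2+n}{2}$, in agreement with the paper.
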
%The proof is in~\ref{app-1}.
\begin{proof}
Note that, $\sum_{i\in V^\prime}\sum_{j\in V}\hat{r}_{ij}$ can be divided into two sum terms as
\begin{equation}
    \sum_{i\in V^\prime}\sum_{j\in V}\hat{r}_{ij}=
        \sum_{i\in V^\prime} \sum_{j\in \hat{\Gamma}(i)} \hat{r}_{ij}
        +\sum_{i\in V^\prime} \sum_{j\in V\backslash\hat{\Gamma}(i)} \hat{r}_{ij}.
\end{equation}
We next compute the above two sum terms separately.

\begin{enumerate}[(i)]
\item
    As for the first term, by Lemma~\ref{Foster}, we have
    \begin{equation}\label{A1_1}
        \sum_{i\in V^\prime} \sum_{j\in \hat{\Gamma}(i)} \hat{r}_{ij}
        =\sum_{ij\in\hat{E}}\hat{r}_{ij}=|\hat{V}|-1=mq+n-1.
    \end{equation}

  \item
  As for the second term,  suppose that $\hat{\Gamma}(i)=\{s,t\}$. According to Eq.~(\ref{div1}), Lemma~\ref{Foster} and Theorem~\ref{con1}, we have
    \begin{equation}\label{A1_2}
        \begin{aligned}
            \sum_{i\in V^\prime} \sum_{j\in V\backslash\hat{\Gamma}(i)} \hat{r}_{ij}
            =&  \sum_{f=1}^{q}\sum_{i\in V^{(f)}} \sum_{j\in V\backslash\hat{\Gamma}(i)}
                \Big(\frac{1}{2}+\frac{2r_{sj}+2r_{tj}-r_{st}}{2q}\Big)\\
            =&  q\sum_{i\in V^{(1)}} \sum_{j\in V\backslash\hat{\Gamma}(i)}
                \Big(\frac{1}{2}+\frac{2r_{sj}+2r_{tj}-r_{st}}{2q}\Big)\\
            =&  \sum_{i\in V^{(1)}}\Bigg(
                \frac{(n-2)q}{2}
                +\sum_{j\in V\backslash\hat{\Gamma}(i)}\big(r_{sj}+r_{tj}\big)
                -\frac{n-2}{2}r_{st}\Bigg).
        \end{aligned}
    \end{equation}
   % where the second equation is obtained by the fact that $V^{(m)}$ behaves coherently.

    For convenience, let $r_s$ be the sum of resistance distances between $s$ and all other nodes in graph $G$, that is,
    \begin{equation*}
        r_{s}=\sum_{j\in V \atop j\neq s} r_{sj}.
    \end{equation*}
    Thus, Eq.~(\ref{A1_2}) can be rewritten as
    \begin{equation}\label{A1_3}
        \begin{aligned}
            \sum_{i\in V^\prime} \sum_{j\in V\backslash\hat{\Gamma}(i)} \hat{r}_{ij}
            =&  \sum_{i\in V^{(1)}}\Bigg(
                \frac{(n-2)q}{2}
                +r_s+r_t
                -\frac{n+2}{2}r_{st}\Bigg)\\
            =&  \frac{m(n-2)q}{2}+\sum_{i\in V^{(1)}}
               (r_s+r_t)
                -\frac{n+2}{2}\sum_{i\in V^{(1)}}r_{st}.
        \end{aligned}
    \end{equation}
The term $\sum_{i\in V^{(1)}}(r_s+r_t)$ can be computed as
     \begin{equation}\label{A1_4}
    \begin{aligned}
        \sum_{i\in V^{(1)}}(r_s+r_t)=&\sum_{st \in E}(r_s+r_t)
        =\sum_{s\in V}d_s r_s\\
        =&\sum_{\{i,j\}\subseteq V}(d_i+d_j)r_{ij}=\bar{\mathcal{K}}(G).
    \end{aligned}
    \end{equation}
   By Lemma~\ref{Foster}, the term $\frac{n+2}{2}\sum_{i\in V^{(1)}}r_{st}$ can be evaluated as
    \begin{equation}\label{A1_5}
        \frac{n+2}{2}\sum_{i\in V^{(1)}}r_{st}=
        \frac{n+2}{2}\sum_{st\in E}r_{st}=\frac{(n+2)(n-1)}{2}.
    \end{equation}

 Plugging Eqs.~(\ref{A1_4}) and~(\ref{A1_5}) into Eq.~(\ref{A1_3}) gives
    \begin{equation}\label{A1_6}
         \sum_{i\in V^\prime} \sum_{j\in V\backslash\hat{\Gamma}(i)} \hat{r}_{ij}
         = \frac{m(n-2)q}{2}+\bar{\mathcal{K}}(G)-\frac{(n+2)(n-1)}{2}.
    \end{equation}
\end{enumerate}
Combining Eqs.~(\ref{A1_1}) and (\ref{A1_6}) gives the desired result.
\end{proof}

\begin{lemma}\label{Afore2}
Let $G$ be a connected graph with $n$ nodes and $m$ edges, and let $S_q(G)$ be the $q$-subdivision graph of  $G$ with node set $\hat{V}=V\cup{V^\prime}$. Then
\begin{equation*}
    \sum_{\{i,j\}\subseteq V^\prime} \hat{r}_{ij}
    =\frac{q}{2}\tilde{\mathcal{K}}(G)+\frac{mq(mq-1)}{2}-\frac{m(n-1)q}{2}.
\end{equation*}
\end{lemma}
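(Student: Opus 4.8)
The plan is to start from Theorem~\ref{con1}(3), which gives, for $i,j\in V^\prime$ with $\hat{\Gamma}(i)=\{s,t\}$ and $\hat{\Gamma}(j)=\{u,v\}$, the explicit value $\hat{r}_{ij}=1+\frac{1}{2q}\big(r_{su}+r_{tu}+r_{sv}+r_{tv}-r_{st}-r_{uv}\big)$. Summing over all unordered pairs $\{i,j\}\subseteq V^\prime$ then splits naturally into three contributions: the constant $1$ summed over $\binom{mq}{2}=\frac{mq(mq-1)}{2}$ pairs, the ``cross'' resistances $r_{su}+r_{tu}+r_{sv}+r_{tv}$, and the ``own-edge'' resistances $-(r_{st}+r_{uv})$. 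The constant piece already produces the middle term $\frac{mq(mq-1)}{2}$ of the claimed formula, so the work reduces to evaluating the other two sums.

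For the own-edge piece, I would note that each node $i\in V^\prime$ is attached to a unique edge of $G$; write $\rho_i=r_{st}$ for that edge's resistance. Since each edge $st\in E$ spawns exactly $q$ nodes of $V^\prime$ (one in each $V^{(f)}$), Foster's theorem (Lemma~\ref{Foster}) gives $\sum_{i\in V^\prime}\rho_i=q\sum_{st\in E}r_{st}=q(n-1)$. In the pair sum $\sum_{\{i,j\}}(\rho_i+\rho_j)$ each index appears in $mq-1$ pairs, so this term contributes $(mq-1)\sum_{i\in V^\prime}\rho_i=(mq-1)q(n-1)$, entering the final total with the prefactor $-\tfrac{1}{2q}$.

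The cross piece is the crux, and it is where $\tilde{\mathcal{K}}(G)$ must appear. I would convert the unordered pair sum into a full ordered double sum over $V^\prime\times V^\prime$ via $\sum_{\{i,j\}}(\cdots)=\tfrac12\big(\Sigma-\mathrm{Diag}\big)$, where $\Sigma=\sum_{i,j\in V^\prime}\sum_{p\in\hat{\Gamma}(i)}\sum_{w\in\hat{\Gamma}(j)}r_{pw}$ and $\mathrm{Diag}$ is the $i=j$ contribution. The key counting observation is that, ranging over $i\in V^\prime$ together with the two endpoints of its associated edge, each old node $x\in V$ is hit exactly $qd_x$ times. Hence the double sum factorizes as $\Sigma=\sum_{x,y\in V}(qd_x)(qd_y)r_{xy}=q^2\sum_{x,y}d_xd_yr_{xy}=2q^2\tilde{\mathcal{K}}(G)$, using the definition of the multiplicative degree-Kirchhoff index. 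The diagonal is $\mathrm{Diag}=\sum_{i\in V^\prime}(r_{ss}+r_{st}+r_{ts}+r_{tt})=2\sum_{i\in V^\prime}\rho_i=2q(n-1)$, so the cross piece equals $q^2\tilde{\mathcal{K}}(G)-q(n-1)$, entering with prefactor $+\tfrac{1}{2q}$.

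Finally, assembling $\frac{mq(mq-1)}{2}+\frac{1}{2q}\big(q^2\tilde{\mathcal{K}}(G)-q(n-1)-(mq-1)q(n-1)\big)$ and simplifying the bracket to $q^2\tilde{\mathcal{K}}(G)-mq^2(n-1)$ yields the stated identity. The main obstacle is purely the combinatorial bookkeeping: getting the endpoint multiplicity $qd_x$ right so that the cross sum collapses exactly to $\tilde{\mathcal{K}}(G)$, and correctly juggling the ordered-versus-unordered factor of two together with the diagonal subtraction. The underlying resistance-distance facts themselves come for free from Theorem~\ref{con1} and Lemma~\ref{Foster}.
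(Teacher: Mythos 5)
Your proof is correct and follows essentially the same route as the paper's: both start from Theorem~\ref{con1}(3), split the sum into the constant, cross, and own-edge pieces, and close with Lemma~\ref{Foster} and the definition of $\tilde{\mathcal{K}}(G)$. The only difference is bookkeeping style --- your ordered-double-sum factorization $\Sigma=\sum_{x,y}(qd_x)(qd_y)r_{xy}$ with the diagonal $2q(n-1)$ subtracted is exactly equivalent to the paper's pair count ($q^2d_kd_l$ subsets when $kl\notin E$, $q^2d_kd_l-q$ when $kl\in E$), and your ``each index lies in $mq-1$ pairs'' count replaces the paper's $i\sim j$ versus $i\nsim j$ split, with both assemblies yielding the stated identity.
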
 %The proof is in~\ref{app-2}.
\begin{proof}
Suppose that $\hat{\Gamma}(i)=\{s,t\}$ and $\hat{\Gamma}(j)=\{u,v\}$. Then  by Theorem~\ref{con1},
\begin{equation}\label{A2_1}
  \begin{aligned}
        &   \sum_{\{i,j\}\subseteq V^\prime} \hat{r}_{ij}
            =\sum_{\{i,j\}\subseteq V^\prime}
           \left (1+\frac{r_{su}+r_{tu}+r_{sv}+r_{tv}}{2q}-\frac{r_{st}+r_{uv}}{2q}\right)\\
        =&  \frac{mq(mq-1)}{2}
            +\sum_{\{i,j\}\subseteq V^\prime}\frac{r_{su}+r_{tu}+r_{sv}+r_{tv}}{2q}
            -\sum_{\{i,j\}\subseteq V^\prime}\frac{r_{st}+r_{uv}}{2q}.
  \end{aligned}
\end{equation}

We now compute the second term in Eq.~\eqref{A2_1}. It is not easy to evaluate it directly. We will compute it in an alternative way.  For any pair of nodes $\{k,l\}\subseteq V$,  we consider how many times $r_{kl}$ appears in the summation. Observe that $r_{kl}$ is summed once if and only if there exists a unique subset $\{i,j\}\subseteq V^\prime$ such that $k\in \hat{\Gamma} (i)$ and $l\in\hat{\Gamma}(j)$. Thus, our problem could be simplified and converted to the following one: how many pairwise different aforementioned subsets exist?
It is not difficult to see that if $k$ is not adjacent to $l$ in $G$ there exist $q^2d_kd_l$ subsets, and that if $kl\in E(G)$, there exist $q^2d_kd_l-q$ such subsets. Then, once again by Lemma~\ref{Foster}, we have
\begin{equation}\label{A2_2}
  \begin{aligned}
        &   \sum_{\{i,j\}\subseteq V^\prime}\frac{r_{su}+r_{tu}+r_{sv}+r_{tv}}{2q}\\
        =&  \sum_{\{k,l\}\subseteq V \atop kl\notin E} \frac{q^2d_kd_l}{2q}r_{kl}
            +\sum_{\{k,l\}\subseteq V \atop kl\in E} \frac{q^2d_kd_l-q}{2q}r_{kl}\\
        =&  \sum_{\{k,l\}\subseteq V} \frac{qd_kd_l}{2}r_{kl}
            -\frac{1}{2}\sum_{\{k,l\}\subseteq V \atop kl\in E} r_{kl}
        =   \frac{q}{2}\sum_{\{k,l\}\subseteq V} d_kd_lr_{kl}-\frac{n-1}{2}\\
        =&  \frac{q}{2}\tilde{\mathcal{K}}(G)-\frac{n-1}{2}.
  \end{aligned}
\end{equation}

We proceed to evaluate the third term in Eq.~\eqref{A2_1}. Note that for any two different nodes $i$ and $j$ in $V^\prime$, if their neighbours are the same, i.e., $\hat{\Gamma}(i)=\hat{\Gamma}(j)=\{s,t\}$, we use  $i\sim j$ to denote this relation. Otherwise, the  sets  of  their neighbours are different, we call $i\nsim j$. According to these two relations and  Eq.~(\ref{div1}), it follows  that
\begin{equation}\label{A2_3}
  \begin{aligned}
        &   \sum_{\{i,j\}\subseteq V^\prime}\frac{r_{st}+r_{uv}}{2q}
        =   \frac{1}{4q}\sum_{i\in V^\prime}\sum_{j\in V^\prime}\big(r_{st}+r_{uv}\big)\\
        =&  \frac{1}{4q}\sum_{f=1}^{q}\sum_{i\in V^{(f)}}\bigg(
            \sum_{i\nsim j}(r_{st}+r_{uv})
            +\sum_{i\sim j}(r_{st}+r_{st})\bigg)\\
        =&  \frac{1}{4q} q\sum_{st\in E}\bigg(
            q\sum_{uv\in E \atop uv\neq st} (r_{st}+r_{uv})
            +2(q-1)r_{st}\bigg)\\
        =&  \frac{1}{4}\sum_{st\in E}\bigg(
            q\sum_{uv\in E}r_{uv}
            +(mq-2)r_{st}\bigg).
  \end{aligned}
\end{equation}
By Lemma~\ref{Foster}, Eq.~(\ref{A2_3}) can be recast as
\begin{equation}\label{A2_4}
    \begin{aligned}
            \sum_{\{i,j\}\subseteq V^\prime}\frac{r_{st}+r_{uv}}{2q}
        &=  \frac{1}{4}\sum_{st\in E}\Big((n-1)q+(mq-2)r_{st}\Big)\\
        &=  \frac{m(n-1)q}{4}+\frac{mq-2}{4}(n-1).
    \end{aligned}
\end{equation}
Plugging Eqs.~\eqref{A2_2} and~\eqref{A2_4} into Eq.~\eqref{A2_1} gives the result.
\end{proof}

\subsection{Multiplicative Degree-Kirchhoff Index}

We first  determine the multiplicative degree-Kirchhoff index for $S_q(G)$.
\begin{theorem}\label{con2}
Let $G$ be a connected graph with $n$ nodes and $m$ edges, and let $S_q(G)$ be the $q$-subdivision graph of  $G$. Then
\begin{equation*}
    \tilde{\mathcal{K}}(S_q(G))=8q\tilde{\mathcal{K}}(G)+2mq(2mq-2n+1).
\end{equation*}
\end{theorem}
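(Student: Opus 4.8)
The plan is to compute $\tilde{\mathcal{K}}(S_q(G))$ directly from its spectral representation. By Lemma~\ref{lemmaK*} applied to $S_q(G)$, which has $\hat{m}=2mq$ edges, we have $\tilde{\mathcal{K}}(S_q(G))=2\hat{m}\sum_{\hat\lambda\neq1}\frac{1}{1-\hat\lambda}$, where the sum runs over all eigenvalues of $\hat{P}$ other than the Perron value $\hat\lambda=1$. The entire spectrum of $\hat{P}$ is supplied by Theorem~\ref{Theorem2}, so the task reduces to evaluating this spectral sum and re-expressing it through the corresponding sum for $G$.

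First I would group the eigenvalues of $\hat{P}$ as in Theorem~\ref{Theorem2}. The nonzero eigenvalues come in pairs $\pm\sqrt{(1+\lambda_i)/2}$, and the key algebraic step is the identity $\frac{1}{1-\sqrt{(1+\lambda_i)/2}}+\frac{1}{1+\sqrt{(1+\lambda_i)/2}}=\frac{4}{1-\lambda_i}$, obtained by clearing denominators and using $\mu^2=(1+\lambda_i)/2$. The Perron pair ($i=1$) contributes only its negative member $\hat\lambda=-1$, worth $\tfrac12$, since $\hat\lambda=1$ is excluded, while each zero eigenvalue contributes $1$, so the zero block contributes its multiplicity. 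Summing the three groups and invoking Lemma~\ref{lemmaK*} on $G$ in the form $\sum_{i=2}^{n}\frac{1}{1-\lambda_i}=\tilde{\mathcal{K}}(G)/(2m)$ gives $\sum_{\hat\lambda\neq1}\frac{1}{1-\hat\lambda}=\frac12+\frac{2\tilde{\mathcal{K}}(G)}{m}+mq-n$; multiplying by $2\hat{m}=4mq$ and expanding, using $2mq+4mq(mq-n)=2mq(2mq-2n+1)$, yields the claimed formula.

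An even shorter route exploits that Lemmas~\ref{lemmaKem} and~\ref{lemmaK*} together give $\tilde{\mathcal{K}}(G)=2m\,K(G)$ for every graph. Applying this to $S_q(G)$ gives $\tilde{\mathcal{K}}(S_q(G))=2\hat{m}\,K(S_q(G))=4mq\,K(S_q(G))$, and substituting the value of $K(S_q(G))$ from Theorem~\ref{conKem} together with $K(G)=\tilde{\mathcal{K}}(G)/(2m)$ reproduces the identity in a single line. I would present the direct spectral computation for self-containedness and merely remark on this shortcut.

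The main obstacle is bookkeeping the eigenvalue multiplicities, which differ between the non-bipartite and bipartite cases of Theorem~\ref{Theorem2}: when $G$ is bipartite, $\lambda_n=-1$ forces the pair $\pm\sqrt{(1+\lambda_n)/2}$ to collapse to $0$, so the zero block has multiplicity $mq-n+2$ rather than $mq-n$ and the sum over nonzero pairs runs only to $n-1$. The delicate point is checking that these two shifts cancel: recategorizing the $i=n$ pair removes a contribution that, evaluated at $\lambda_n=-1$, would equal $\frac{4}{1-\lambda_n}=2$, while the two extra zero eigenvalues add exactly $2$. Hence both cases produce the same spectral sum and a formula uniform in $G$; verifying this cancellation is the only genuinely non-routine part of the argument.
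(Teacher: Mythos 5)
Your proposal is correct and is essentially the paper's own argument: the ``shortcut'' you mention at the end --- combining $\tilde{\mathcal{K}}(S_q(G))=2\hat{m}\,K(S_q(G))$ (Lemmas~\ref{lemmaKem} and~\ref{lemmaK*}) with Theorem~\ref{conKem} --- is exactly the paper's one-line proof, while your primary direct spectral computation simply unpacks the paper's proof of Theorem~\ref{conKem}, using the same pairing identity $\frac{1}{1-\sqrt{(1+\lambda_i)/2}}+\frac{1}{1+\sqrt{(1+\lambda_i)/2}}=\frac{4}{1-\lambda_i}$ and the same bipartite/non-bipartite multiplicity bookkeeping, whose cancellation you verify correctly. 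No gaps; nothing to fix.
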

\begin{proof}
According to Lemma~\ref{lemmaKem} and~\ref{lemmaK*}, Theorem~\ref{con2} is an obvious consequence of Theorem~\ref{conKem}.
\end{proof}

%In the next two paragraphs, we will dedicate to derive the additive degree-Kirchhoff index and the Kirchhoff index. Analogous to the aforementioned decimation method, we would scrutinize the close relationship between $S_q(G)$ and $G$. To complete our proof, several lemmas are needed and displayed bellow.

\subsection{Additive Degree-Kirchhoff Index}

We continue to determine the additive degree-Kirchhoff index for  $S_q(G)$.
\begin{theorem}\label{conK+}
Let $G$ be a simple connected graph with $n$ nodes and $m$ edges, and let $S_q(G)$ be the $ q$-subdivision graph of $G$. Then
\begin{equation*}
    \bar{\mathcal{K}}(S_q(G))=4\bar{\mathcal{K}}(G)+4q\tilde{\mathcal{K}}(G)+mq(3mq-2n+1)-n(n-1).
\end{equation*}
\end{theorem}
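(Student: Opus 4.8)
The plan is to expand the definition
\begin{equation*}
\bar{\mathcal{K}}(S_q(G))=\sum_{\{i,j\}\subseteq\hat V}(\hat d_i+\hat d_j)\hat r_{ij}
\end{equation*}
and split the sum according to the partition $\hat V=V\cup V^\prime$ into the pairs with both endpoints old, one old and one new, and both new. Recalling $\hat d_i=qd_i$ for $i\in V$ and $\hat d_i=2$ for $i\in V^\prime$, this gives
\begin{equation*}
\bar{\mathcal{K}}(S_q(G))=\underbrace{\sum_{\{i,j\}\subseteq V}(\hat d_i+\hat d_j)\hat r_{ij}}_{S_1}+\underbrace{\sum_{i\in V^\prime}\sum_{j\in V}(\hat d_i+\hat d_j)\hat r_{ij}}_{S_2}+\underbrace{\sum_{\{i,j\}\subseteq V^\prime}(\hat d_i+\hat d_j)\hat r_{ij}}_{S_3}.
\end{equation*}
I would evaluate the three contributions in turn, using Theorem~\ref{con1} for the resistances together with the intermediary Lemmas~\ref{afore1} and~\ref{Afore2}.

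The first and third pieces are immediate. For $\{i,j\}\subseteq V$ one has $\hat d_i+\hat d_j=q(d_i+d_j)$ and $\hat r_{ij}=\tfrac{2}{q}r_{ij}$ by Theorem~\ref{con1}(1), so the factors of $q$ cancel and $S_1=2\bar{\mathcal{K}}(G)$. For $\{i,j\}\subseteq V^\prime$ every new node has degree $2$, hence $\hat d_i+\hat d_j=4$ and $S_3=4\sum_{\{i,j\}\subseteq V^\prime}\hat r_{ij}$, which Lemma~\ref{Afore2} evaluates outright.

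The mixed sum $S_2$ carries the real work. Writing $\hat d_i+\hat d_j=2+qd_j$ for $i\in V^\prime,\ j\in V$, I would break it as $2\sum_{i\in V^\prime}\sum_{j\in V}\hat r_{ij}+q\sum_{i\in V^\prime}\sum_{j\in V}d_j\hat r_{ij}$; the first piece is twice the quantity already computed in Lemma~\ref{afore1}. For the degree-weighted piece I would substitute Theorem~\ref{con1}(2) (which is valid for every $j\in V$, including $j\in\hat\Gamma(i)$): for a fixed new node $i$ with $\hat\Gamma(i)=\{s,t\}$, weighting by $d_j$ and using $\sum_{j\in V}d_j=2m$ gives
\begin{equation*}
\sum_{j\in V}d_j\hat r_{ij}=m+\frac{1}{q}\bigl(R_s+R_t\bigr)-\frac{m\,r_{st}}{q},\qquad R_s:=\sum_{j\in V}d_j r_{sj}.
\end{equation*}
Summing over the $q$ new nodes attached to each edge $st\in E$ and then over the edges, the crucial observation is that each weighted resistance $R_s$ recurs with multiplicity $d_s$, so that $\sum_{st\in E}(R_s+R_t)=\sum_{s\in V}d_sR_s=\sum_{s,j}d_sd_jr_{sj}=2\tilde{\mathcal{K}}(G)$, while $\sum_{st\in E}r_{st}=n-1$ by Foster's Lemma~\ref{Foster}. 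This collapses the degree-weighted mixed sum to $m^2q+2\tilde{\mathcal{K}}(G)-m(n-1)$.

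Finally I would assemble $S_1+S_2+S_3$ and simplify. The coefficients of $\bar{\mathcal{K}}(G)$ accumulate to $4$ (two from $S_1$ and two from the plain half of $S_2$) and those of $\tilde{\mathcal{K}}(G)$ to $4q$ (two from $S_3$ via Lemma~\ref{Afore2} and two from the degree-weighted half of $S_2$); the remaining polynomial terms in $m,n,q$ combine, after the $\pm mq$ and $\pm mnq$ contributions from $S_2$ and $S_3$ cancel in the expected way, into $3m^2q^2-2mnq+mq-n^2+n=mq(3mq-2n+1)-n(n-1)$, which is the asserted formula. The main obstacle is the bookkeeping inside the degree-weighted mixed sum: correctly tracking both multiplicities—the factor $q$ for the parallel subdivision nodes on each edge and the factor $d_s$ with which each $R_s$ reappears—and recognizing the resulting double degree-sum as $2\tilde{\mathcal{K}}(G)$ rather than as something involving $\bar{\mathcal{K}}(G)$; once that identity is in hand, every remaining step reduces cleanly to the pre-established lemmas.
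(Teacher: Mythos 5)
Your proposal is correct and follows essentially the same route as the paper's proof: the identical three-way decomposition of $\bar{\mathcal{K}}(S_q(G))$ over pairs in $V$, mixed pairs in $V^\prime\times V$, and pairs in $V^\prime$, evaluated via Theorem~\ref{con1}, Lemmas~\ref{afore1} and~\ref{Afore2}, Foster's Lemma~\ref{Foster}, and the same key identity $\sum_{st\in E}(R_s+R_t)=\sum_{s\in V}d_sR_s=2\tilde{\mathcal{K}}(G)$, which is exactly the paper's Eq.~\eqref{Kf+6}. Your only (welcome) addition is to state explicitly that Theorem~\ref{con1}(2) remains valid when $j\in\hat{\Gamma}(i)$, a fact the paper's Eq.~\eqref{Kf+5} uses tacitly when summing over all $j\in V$.
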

\begin{proof}
%First notice that $\hat{d}_i=qd_i$ if $i\in V$, while $\hat{d}_i=2$ if $i\in V^\prime$.
By  definition of the additive degree-Kirchhoff index, we have
\begin{equation}\label{Kf+1}
    \begin{aligned}
    \Bar{\mathcal{K}}(S_q(G))=&  \sum_{\{i,j\}\subseteq V\cup V^\prime} (\hat{d}_i+\hat{d}_j) \hat{r}_{ij}\\
                    =&  \sum_{\{i,j\}\subseteq V} (\hat{d}_i+\hat{d}_j) \hat{r}_{ij}
                        +\sum_{i\in V^\prime}\sum_{j\in V}(\hat{d}_i+\hat{d}_j) \hat{r}_{ij}
                        +\sum_{\{i,j\}\subseteq V^\prime} (\hat{d}_i+\hat{d}_j) \hat{r}_{ij}.
    \end{aligned}
\end{equation}
We now compute the three sum terms on the last row of Eq.~\eqref{Kf+1} one by one.

For the first sum term, by Theorem~\ref{con1}, we have
    \begin{equation}\label{Kf+2}
        \sum_{\{i,j\}\subseteq V} (\hat{d}_i+\hat{d}_j) \hat{r}_{ij}
        =\sum_{\{i,j\}\subseteq V} (qd_i+qd_j)  \frac{2}{q}r_{ij}
        =2\Bar{\mathcal{K}}(G).
    \end{equation}

For the second sum term, it can be evaluated as
  \begin{equation}\label{Kf+3}
    \begin{aligned}
        \sum_{i\in V^\prime}\sum_{j\in V}(\hat{d}_i+\hat{d}_j) \hat{r}_{ij}
        =&  \sum_{i\in V^\prime}\sum_{j\in V}(2+qd_j) \hat{r}_{ij}\\
        =&  2\sum_{i\in V^\prime}\sum_{j\in V}\hat{r}_{ij}
            +q\sum_{i\in V^\prime}\sum_{j\in V}d_j\hat{r}_{ij}.
    \end{aligned}
  \end{equation}
By Lemma~\ref{afore1}, we  have
  \begin{equation}\label{Kf+4}
    2\sum_{i\in V^\prime}\sum_{j\in V}\hat{r}_{ij}=
    2\bar{\mathcal{K}}(G)+mnq-n^2+n.
  \end{equation}
On the other hand, by Lemma~\ref{Foster} and Theorem~\ref{con1},
  \begin{equation}\label{Kf+5}
    \begin{aligned}
          q\sum_{i\in V^\prime}\sum_{j\in V}d_j\hat{r}_{ij}
          =&    \sum_{i\in V^\prime}\sum_{j\in V}qd_j
                \Big(\frac{1}{2}+\frac{2r_{sj}+2r_{tj}-r_{st}}{2q}\Big)\\
          =&    \frac{q}{2}\sum_{i\in V^\prime}\sum_{j\in V}d_j
                +\sum_{i\in V^\prime}\sum_{j\in V}d_j(r_{sj}+r_{tj})
                -\frac{1}{2}\sum_{i\in V^\prime}\sum_{j\in V}d_jr_{st}\\
          =&    \frac{q}{2}\sum_{i\in V^\prime}2m
                +\sum_{i\in V^\prime}\sum_{j\in V}d_j(r_{sj}+r_{tj})
                -\frac{1}{2}\sum_{i\in V^\prime}2mr_{st}\\
          =&    m^2q^2+\sum_{i\in V^\prime}\sum_{j\in V}d_j(r_{sj}+r_{tj})
                -mq(n-1).
    \end{aligned}
  \end{equation}
%  where the last equality derived for $V^{(m)}$ behaves coherently. Analogously, for the middle part of Eq.(\ref{Kf+5}),
 For the middle term in Eq.~\eqref{Kf+5}, we have
  \begin{equation}\label{Kf+6}
    \begin{aligned}
        \sum_{i\in V^\prime}\sum_{j\in V}d_j(r_{sj}+r_{tj})
        =&  q\sum_{i\in V^{(1)}}\sum_{j\in V}d_j(r_{sj}+r_{tj})
        =   q\sum_{j\in V}\sum_{i\in V^{(1)}}d_j(r_{sj}+r_{tj})\\
        =&  q\sum_{j\in V}\sum_{k\in V}d_j d_k r_{kj}
        =   2q\tilde{\mathcal{K}}(G).
    \end{aligned}
  \end{equation}
Combining  Eqs.~(\ref{Kf+3})-(\ref{Kf+6}) yields
  \begin{equation}\label{Kf+7}
    \sum_{i\in V^\prime}\sum_{j\in V}(\hat{d}_i+\hat{d}_j) \hat{r}_{ij}
    =2\Bar{\mathcal{K}}(G)+2q\tilde{\mathcal{K}}(G)+m^2q^2+mq-n^2+n.
  \end{equation}

For the third sum term in Eq.~(\ref{Kf+1}), by Lemma~\ref{Afore2}, we have
    \begin{equation}\label{Kf+8}
        \sum_{\{i,j\}\subseteq V^\prime} (\hat{d}_i+\hat{d}_j) \hat{r}_{ij}
        =4\sum_{\{i,j\}\subseteq V^\prime} \hat{r}_{ij}
        =2q\tilde{\mathcal{K}}(G)+2m^2q^2-2mnq.
    \end{equation}

Substituting Eqs.~(\ref{Kf+2}) (\ref{Kf+7}) and (\ref{Kf+8}) back into Eq.~(\ref{Kf+1}),  our proof is completed  after simple calculations.
\end{proof}

\subsection{Kirchhoff Index}

%With the above-obtained intermediary result,
We finally  determine the Kirchhoff index for $S_q(G)$.
\begin{theorem}\label{conK}
Let $G$ be a simple connected graph with $n$ nodes and $m$ edges, and let $S_q(G)$ be the $ q$-subdivision graph of $G$. Then
\begin{equation*}
    \mathcal{K}(S_q(G))=\frac{2}{q}\mathcal{K}(G)+\bar{\mathcal{K}}(G)+\frac{q}{2}\tilde{\mathcal{K}}(G)
    +\frac{m^2q^2-n(n-1)}{2}.
\end{equation*}
\end{theorem}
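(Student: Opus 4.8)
The plan is to compute $\mathcal{K}(S_q(G))$ directly from Definition~\ref{defK}, writing it as a sum of $\hat{r}_{ij}$ over all unordered pairs $\{i,j\}$ of nodes in $\hat{V}=V\cup V'$, and then exploit the partition $\hat{V}=V\cup V'$ to split this sum into three contributions according to whether both endpoints lie in $V$, exactly one endpoint lies in $V'$, or both lie in $V'$:
\begin{equation*}
\mathcal{K}(S_q(G))=\sum_{\{i,j\}\subseteq V}\hat{r}_{ij}+\sum_{i\in V'}\sum_{j\in V}\hat{r}_{ij}+\sum_{\{i,j\}\subseteq V'}\hat{r}_{ij}.
\end{equation*}
Each of the three pieces has already been prepared in the preceding results, so the bulk of the work is simply to assemble them.

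For the first piece I would invoke Theorem~\ref{con1}(1), which gives $\hat{r}_{ij}=\frac{2}{q}r_{ij}$ for $i,j\in V$; summing over all pairs in $V$ and recognizing $\sum_{\{i,j\}\subseteq V}r_{ij}=\mathcal{K}(G)$ by Definition~\ref{defK}, this contributes $\frac{2}{q}\mathcal{K}(G)$. The second (mixed) piece is precisely the quantity evaluated in Lemma~\ref{afore1}, equal to $\bar{\mathcal{K}}(G)+\frac{mnq-n^2+n}{2}$. The third piece is handled by Lemma~\ref{Afore2}, which yields $\frac{q}{2}\tilde{\mathcal{K}}(G)+\frac{mq(mq-1)}{2}-\frac{m(n-1)q}{2}$.

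Adding the three contributions, the three index terms $\frac{2}{q}\mathcal{K}(G)$, $\bar{\mathcal{K}}(G)$, and $\frac{q}{2}\tilde{\mathcal{K}}(G)$ appear immediately in the stated form, so the only remaining task is to collect the purely combinatorial constants $\frac{mnq-n^2+n}{2}$, $\frac{mq(mq-1)}{2}$, and $-\frac{m(n-1)q}{2}$ and verify that they combine to $\frac{m^2q^2-n(n-1)}{2}$. Expanding $mq(mq-1)=m^2q^2-mq$ and $m(n-1)q=mnq-mq$, the $mnq$ terms cancel against each other and the $mq$ terms cancel, leaving $\frac{m^2q^2-n^2+n}{2}$, as claimed. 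The main --- indeed essentially the only --- obstacle is this bookkeeping of constants; conceptually the argument is a one-line assembly of Theorem~\ref{con1}, Lemma~\ref{afore1}, and Lemma~\ref{Afore2}, requiring no new structural or spectral input.
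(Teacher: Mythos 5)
Your proposal is correct and follows exactly the same route as the paper's own proof: the paper likewise splits $\mathcal{K}(S_q(G))$ into the three sums over pairs in $V$, mixed pairs, and pairs in $V'$, evaluates them via Theorem~\ref{con1}, Lemma~\ref{afore1}, and Lemma~\ref{Afore2} respectively, and combines the constants. Your bookkeeping of the constant terms, yielding $\frac{m^2q^2-n(n-1)}{2}$, is also verified correct.
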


\begin{proof}
According to Definition~\ref{defK} and Eq.~(\ref{div1}), we have
    \begin{equation}\label{Kf1}
    \begin{aligned}
        \mathcal{K}(S_q(G))  =&  \sum_{\{i,j\}\subseteq\hat{V}} \hat{r}_{ij}
                        =   \sum_{\{i,j\}\subseteq V\bigcup V^\prime} \hat{r}_{ij}\\
                        =&  \sum_{\{i,j\}\subseteq V} \hat{r}_{ij}
                            +\sum_{i\in V^\prime} \sum_{j\in V} \hat{r}_{ij}
                            +\sum_{\{i,j\}\subseteq V^\prime} \hat{r}_{ij}.
    \end{aligned}
    \end{equation}
%In order to complete our proof more clearly,
We shall compute the three sum terms in Eq.~\eqref{Kf1} one by one.
%\begin{enumerate}[(i)]

For the first sum term, by Theorem~\ref{con1},
    \begin{equation}\label{Kf2}
        \sum_{\{i,j\}\subseteq V} \hat{r}_{ij}=\sum_{\{i,j\}\subseteq V} \frac{2}{q} r_{ij}
        =\frac{2}{q}\mathcal{K}(G).
    \end{equation}
 For the second sum term, by Lemma~\ref{afore1} we obtain
    \begin{equation}\label{Kf3}
        \sum_{i\in V^\prime}\sum_{j\in V}\hat{r}_{ij}=
        \bar{\mathcal{K}}(G)+\frac{mnq-n^2+n}{2}.
    \end{equation}
For the third sum term, by Lemma~\ref{Afore2}, we have
    \begin{equation}\label{Kf4}
    \sum_{\{i,j\}\subseteq V^\prime} \hat{r}_{ij}
    =\frac{q}{2}\tilde{\mathcal{K}}(G)+\frac{mq(mq-1)}{2}-\frac{m(n-1)q}{2}.
    \end{equation}
%\end{enumerate}

Plugging Eqs.~(\ref{Kf2})-(\ref{Kf4})  into Eq.~(\ref{Kf1}) leads to the desired result.
\end{proof}

\section{Properties  of Iterated $q$-subdivision graphs and Their Applications}

The $q$-subdivision graphs have found many applications in physics and network science. For example, by iteratively applying   $q$-subdivision operation on an edge we can obtain the hierarchical lattices, which can be used to mimic complex networks with the striking scale-free fractal topologies~\cite{ZhZhZo07}. In this section, we study the properties of iterated $q$-subdivision graphs, based on which we further obtain exact expressions for  some interesting quantities  for  the hierarchical lattices.

\subsection{Definition  of Iterated $q$-subdivision Graphs}

The family of iterated $q$-subdivision graphs $S_{q,k}(G)$ of a graph $G$ is defined as follows. For $k=0$, $S_{q,0}(G)=G$. For $k\geq 1$, $S_{q,k}(G)$ is obtained from $S_{q, k-1}(G)$ by performing the $q$-subdivision operation on $S_{q, k-1}(G)$. In other words, $S_{q,k}(G)=S_q(S_{q,k-1}(G))$.  % Through iterated constructions, we could derive a family of graphs. Because of the self-similarity properties shared among the whole family, the iterated $q$-th subdivision graphs is of vital significance. In this section, we shall study important invariants mentioned above of iterated $q$-subdivisions, which could be derived from our previous work easily.
For a quantity $Z$ of $G$, we use  $Z_{q,k}$  to denote the corresponding quantity associated with $S_{q,k}(G)$.  Then, in $S_{q,k}(G)$, the number of edges is
\begin{equation}\label{iterated1A}
         m_{q,k} =  2q m_{q,k-1}=(2q)^{k}m,
\end{equation}
and the number of   nodes is
\begin{equation}\label{iterated1B}
        n_{q,k} =  n_{q,k-1}+q m_{q,k-1} =\frac{mq\big[(2q)^{k}-1\big]}{2q-1}+n.
\end{equation}

\subsection{Formulas of  Quantities   of Iterated $q$-subdivision Graphs}

We here present expressions for some interesting quantities  for iterated $q$-subdivision graphs $S_{q,k}(G)$. % including  for are ready to give the formulae for these graph invariants of $S_{q,k}(G)$.

\subsubsection{Kemeny's Constant}
\begin{theorem}\label{iteratedKem}
Let $G$ be a simple connected graph with $n$ nodes and $m$ edges. Then
\begin{enumerate}[(i)]
  \item if $q=2$,
  \begin{equation*}
    K_{2,k} =4^kK_{2,0}
            +\frac{mk4^k}{3}
            +\Big(\frac{4m+3}{6}-n\Big)\frac{4^k-1}{3};
  \end{equation*}
  \item if $q\neq2$,
  \begin{equation*}
    \begin{aligned}
    K_{q,k} =&4^kK_{q,0}
            +\frac{mq(q-1)}{(q-2)(2q-1)}\left[(2q)^k-4^k\right]\\
            &+\Big(\frac{2mq+2q-1}{2(2q-1)}-n\Big)\frac{4^k-1}{3}.
    \end{aligned}
  \end{equation*}
\end{enumerate}
\end{theorem}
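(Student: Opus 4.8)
The plan is to set up and solve a linear recurrence for $K_{q,k}$ using Theorem~\ref{conKem} as the one-step transition rule. Theorem~\ref{conKem} states that $K(S_q(G)) = 4K(G) + \frac{2mq-2n+1}{2}$, but applying it iteratively requires care because the ``inhomogeneous'' part $\frac{2mq-2n+1}{2}$ depends on the number of nodes $m$ and edges $n$ of the \emph{current} graph, which change at each level. So the first step is to rewrite the one-step recurrence with level-dependent coefficients: applying Theorem~\ref{conKem} to $S_{q,k-1}(G)$ in place of $G$ gives
\begin{equation*}
K_{q,k} = 4K_{q,k-1} + \frac{2m_{q,k-1}q - 2n_{q,k-1} + 1}{2},
\end{equation*}
where $m_{q,k-1} = (2q)^{k-1}m$ and $n_{q,k-1} = \frac{mq[(2q)^{k-1}-1]}{2q-1}+n$ are known explicitly from Eqs.~\eqref{iterated1A} and~\eqref{iterated1B}.

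Next I would substitute these explicit expressions for $m_{q,k-1}$ and $n_{q,k-1}$ into the inhomogeneous term to get a recurrence of the clean form $K_{q,k} = 4K_{q,k-1} + f(k)$, where $f(k)$ is an explicit combination of the geometric term $(2q)^{k-1}$ and a constant. Collecting terms, $f(k)$ will look like $\alpha(2q)^{k-1} + \beta$ for constants $\alpha,\beta$ depending only on $m$, $n$, $q$; specifically the $(2q)^{k-1}$ piece comes from both $m_{q,k-1}q$ and the geometric part of $n_{q,k-1}$, while $\beta$ absorbs the $+1$, the $-2n$, and the constant part of $n_{q,k-1}$. Then I would solve this first-order linear recurrence by the standard method: unrolling gives $K_{q,k} = 4^k K_{q,0} + \sum_{j=1}^{k} 4^{k-j} f(j)$, and I would evaluate the resulting geometric sums. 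The homogeneous solution yields the $4^k K_{q,0}$ term; the particular contributions come from two geometric sums, one with ratio $\frac{2q}{4}=\frac{q}{2}$ (from the $(2q)^{k-1}$ part) and one with ratio $\frac{1}{4}$ (from the constant part, giving the $\frac{4^k-1}{3}$ factor seen in the statement).

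The case split between $q=2$ and $q\neq 2$ is exactly what forces this summation to branch. The geometric sum $\sum_{j}4^{k-j}(2q)^{j-1}$ has common ratio $\frac{q}{2}$, which degenerates when $q=2$: the ratio becomes $1$, so the sum evaluates to $k\cdot 4^{k-1}$ (an arithmetic-like term) rather than a clean geometric closed form. This produces the distinctive $\frac{mk4^k}{3}$ term with an explicit factor of $k$ in part (i), versus the $\frac{mq(q-1)}{(q-2)(2q-1)}[(2q)^k - 4^k]$ term in part (ii) where the denominator $q-2$ is precisely the artifact of summing a geometric series with ratio $\frac{q}{2}\neq 1$. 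I expect the main obstacle to be purely computational bookkeeping: correctly identifying $\alpha$ and $\beta$ after the substitution (watching the $\frac{1}{2q-1}$ denominators from $n_{q,k-1}$), and then carefully evaluating the two geometric sums in each case so that the final constants match the stated formulas exactly. There is no conceptual difficulty beyond the recurrence itself; the verification that the $q=2$ limit of part (ii) is consistent with part (i) (via $\lim_{q\to 2}\frac{(2q)^k-4^k}{q-2}$) serves as a useful internal check.
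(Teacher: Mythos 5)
Your proposal is correct and takes essentially the same route as the paper: the paper likewise applies Theorem~\ref{conKem} at each level with $m_{q,k-1}=(2q)^{k-1}m$ and $n_{q,k-1}$ from Eqs.~(\ref{iterated1A}) and~(\ref{iterated1B}), then divides the recurrence by $4^k$ and telescopes, which is algebraically identical to your unrolling $K_{q,k}=4^kK_{q,0}+\sum_{j=1}^{k}4^{k-j}f(j)$ with $f(j)=\alpha(2q)^{j-1}+\beta$. Your explanation of the $q=2$ branch as the degeneration of the ratio-$\frac{q}{2}$ geometric sum (producing the $k\cdot 4^{k}$-type term) is exactly the mechanism behind the paper's case split, so no gap remains beyond the routine bookkeeping you already flag.
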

\begin{proof}
According to Theorem~\ref{conKem} and Eqs.~(\ref{iterated1A}) and~(\ref{iterated1B}) , we obtain
\begin{equation*}
    \begin{aligned}
        K_{q,k} =&  4K_{q,k-1}+\frac{1}{2}\left(2m_{q,k-1}q-2n_{q,k-1}+1\right)\\
                =&  4K_{q,k-1}+mq(2q)^{k-1}-\frac{mq\big[(2q)^{k-1}-1\big]}{2q-1}-n+\frac{1}{2}.
    \end{aligned}
\end{equation*}
Divided by $4^k$ on both sides, we obtain
\begin{equation*}
    \frac{K_{q,k}}{4^k}-\frac{K_{q,k-1}}{4^{k-1}}
    =\frac{m(q-1)}{2q-1}\Big(\frac{q}{2}\Big)^{k}
    +\Big(\frac{2mq+2q-1}{2(2q-1)}-n\Big)\frac{1}{4^k}.
\end{equation*}
If $q\neq2$, we  derive the following relation
\begin{equation*}
    \frac{K_{q,k}}{4^k}-\frac{K_{q,0}}{4^0}
    =\frac{mq(q-1)}{2(2q-1)}\frac{1-(q/2)^k}{1-q/2}
    +\Big(\frac{2mq+2q-1}{2(2q-1)}-n\Big)\frac{1/4\big[1-(1/4)^k\big]}{1-1/4},
\end{equation*}
which leads to the result through simple calculations.

For the case $q=2$, the proof is similar. %but a little easier
\end{proof}

\subsubsection{Multiplicative Degree-Kirchhoff Index}

\begin{theorem}\label{iteratedK*}
Let $G$ be a simple connected graph with $n$ nodes and $m$ edges. Then
\begin{enumerate}[(i)]
  \item if $q=2$,
    \begin{equation*}
    \tilde{\mathcal{K}}_{2,k}    =   16^{k}\tilde{\mathcal{K}}_{2,0}
                        +\frac{2m^2k16^{k}}{3}
                        +\Big(\frac{4m^2+3m}{3}-2mn\Big)\frac{16^{k}-4^{k}}{3};
    \end{equation*}
  \item if $q\neq2$,
    \begin{equation*}
    \begin{aligned}
    \tilde{\mathcal{K}}_{q,k}    =&  (8q)^k\tilde{\mathcal{K}}_{q,0}
                        +\frac{2m^2q(q-1)}{(q-2)(2q-1)}\Big[(2q)^{2k}-(8q)^k\Big]\\
                        &+\Big(\frac{2m^2q+2mq-m}{2q-1}-2mn\Big)\frac{(8q)^k-(2q)^k}{3}.
    \end{aligned}
    \end{equation*}
\end{enumerate}
\end{theorem}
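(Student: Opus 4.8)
The plan is to reduce Theorem~\ref{iteratedK*} to the already-established formula for the iterated Kemeny's constant (Theorem~\ref{iteratedKem}), exactly in the spirit of how Theorem~\ref{con2} was deduced from Theorem~\ref{conKem}. The key observation is that Lemmas~\ref{lemmaKem} and~\ref{lemmaK*} together give, for any simple connected graph $H$ with $m_H$ edges, the identity $\tilde{\mathcal{K}}(H)=2m_H\,K(H)$, since both sides equal a multiple of $\sum_{i\geq 2}(1-\lambda_i)^{-1}$. Applying this to $H=S_{q,k}(G)$ and invoking Eq.~\eqref{iterated1A}, namely $m_{q,k}=(2q)^k m$, I would write
\begin{equation*}
\tilde{\mathcal{K}}_{q,k}=2m_{q,k}K_{q,k}=2(2q)^k m\,K_{q,k}.
\end{equation*}

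Next I would substitute the closed form for $K_{q,k}$ from Theorem~\ref{iteratedKem}, treating the cases $q=2$ and $q\neq2$ separately as in that theorem, and expand the product term by term. The bookkeeping rests on the three exponential identities $(2q)^k\cdot 4^k=(8q)^k$, $(2q)^k\cdot(2q)^k=(2q)^{2k}$, and $(2q)^k(4^k-1)=(8q)^k-(2q)^k$, which convert each term of $K_{q,k}$ into the corresponding term claimed for $\tilde{\mathcal{K}}_{q,k}$. Specifically, the leading term produces $2m(8q)^k K_{q,0}=(8q)^k\tilde{\mathcal{K}}_{q,0}$ after reusing $\tilde{\mathcal{K}}_{q,0}=2mK_{q,0}$; the term carrying $(2q)^k-4^k$ becomes the stated $(2q)^{2k}-(8q)^k$ term; and the affine factor $2m\big(\tfrac{2mq+2q-1}{2(2q-1)}-n\big)$ collapses to $\tfrac{2m^2q+2mq-m}{2q-1}-2mn$, which is precisely the coefficient appearing in the theorem.

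Alternatively, one may bypass Theorem~\ref{iteratedKem} and argue directly from the one-step recursion of Theorem~\ref{con2}, namely
\begin{equation*}
\tilde{\mathcal{K}}_{q,k}=8q\,\tilde{\mathcal{K}}_{q,k-1}+2m_{q,k-1}q\big(2m_{q,k-1}q-2n_{q,k-1}+1\big),
\end{equation*}
then substitute Eqs.~\eqref{iterated1A} and~\eqref{iterated1B} to rewrite the inhomogeneous term as $A(2q)^{2k}+B(2q)^k$ with $A=\tfrac{2m^2(q-1)}{2q-1}$ and $B=\tfrac{2m^2q+2mq-m}{2q-1}-2mn$, divide by $(8q)^k$, and telescope. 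Under this division the forcing term becomes $A(q/2)^k+B(1/4)^k$, so the geometric sum of the $B$-part yields the factor $\tfrac{1}{3}$, while the $A$-part forces the case split $q=2$ versus $q\neq2$ precisely because the ratio $q/2$ equals one exactly when $q=2$ (in which case $\sum_{j=1}^k(q/2)^j=k$, explaining the extra factor $k$).

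The main obstacle in either route is purely algebraic: in the direct route it is verifying that the forcing term collapses cleanly to a two-term exponential in $(2q)^{k}$ with the stated constants $A$ and $B$, and in the reduction route it is confirming that each factor of $K_{q,k}$ maps to the intended term without stray constants. I expect the reduction via $\tilde{\mathcal{K}}=2mK$ to be the shorter path, since it inherits the $q=2$ case split from Theorem~\ref{iteratedKem} for free and requires no fresh telescoping.
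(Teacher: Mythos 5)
Your proposal is correct and takes essentially the same approach as the paper: the paper's proof is exactly your primary route, deducing the result directly from Theorem~\ref{iteratedKem} via the identity $\tilde{\mathcal{K}}(H)=2m_H K(H)$ furnished by Lemmas~\ref{lemmaKem} and~\ref{lemmaK*} together with $m_{q,k}=(2q)^k m$, and your term-by-term bookkeeping (including the collapse of the affine coefficient to $\frac{2m^2q+2mq-m}{2q-1}-2mn$) checks out in both cases. The alternative telescoping route you sketch is a valid but longer fallback, as you yourself note.
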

\begin{proof}
By Lemmas~\ref{lemmaKem} and~\ref{lemmaK*},  the result follows directly from Theorem~\ref{iteratedKem}.
\end{proof}

\subsubsection{Additive Degree-Kirchhoff Index}
\begin{theorem}\label{iteratedK+}
Let $G$ be a simple connected graph with $n$ nodes and $m$ edges. Then
\begin{enumerate}[(i)]
  \item if $q=2$,
  \begin{equation*}
    \begin{aligned}
    \bar{\mathcal{K}}_{2,k}     =&  4^k\bar{\mathcal{K}}_{2,0}
                                    +\frac{2\left(16^k-4^k\right)\tilde{\mathcal{K}}_{2,0}}{3}
                                    +\frac{16^k-4^k}{9}2m(2m-2n+1)\\
                                &   +\frac{16^k}{9}4m^2k
                                    -\frac{4^k-1}{27}(2m-3n)(2m-3n+3);
    \end{aligned}
  \end{equation*}
  \item if $q\neq2$,
  \begin{equation*}
    \begin{aligned}
    \bar{\mathcal{K}}_{q,k} =&  4^k\bar{\mathcal{K}}_{q,0}
                                +\frac{3m^2q^3(q-1)\big[(2q)^{2k}-4^k\big]}{(q-2)(q+1)(2q-1)^2}\\
                            &   +\frac{\big[(8q)^k-4^k\big]}{2q-1}\Big(q\tilde{\mathcal{K}}_{q,0}
                                -\frac{2m^2q^2}{3(q-2)}-\frac{m(2n-1)q}{3}\Big)\\
                            &   -\frac{mq\big[(2q)^k-4^k\big]}{3(2q-1)}
                                \Big(\frac{2mq}{2q-1}
                                -2n+1\Big)\\
                            &   -\frac{4^k-1}{3}
                                \bigg(\Big(\frac{mq}{2q-1}-n\Big)\Big(\frac{mq}{2q-1}-n+1\Big)\bigg).
    \end{aligned}
  \end{equation*}
\end{enumerate}
\end{theorem}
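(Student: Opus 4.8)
The plan is to reduce the closed-form evaluation to solving a first-order linear recurrence in the iteration index $k$, exactly as was done for Kemeny's constant in Theorem~\ref{iteratedKem}. Since $S_{q,k}(G)=S_q(S_{q,k-1}(G))$, I would apply the single-step formula of Theorem~\ref{conK+} with $G$ replaced by $S_{q,k-1}(G)$, which gives
\begin{equation*}
\bar{\mathcal{K}}_{q,k}=4\bar{\mathcal{K}}_{q,k-1}+4q\tilde{\mathcal{K}}_{q,k-1}+m_{q,k-1}q\big(3m_{q,k-1}q-2n_{q,k-1}+1\big)-n_{q,k-1}\big(n_{q,k-1}-1\big).
\end{equation*}
The homogeneous part of this recurrence has rate $4$, so the closed form will be $4^k\bar{\mathcal{K}}_{q,0}$ plus a particular solution driven by the remaining, fully explicit, terms.

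Second, I would make every term on the right-hand side explicit in $k$. The counts $m_{q,k-1}=(2q)^{k-1}m$ and $n_{q,k-1}=\frac{mq[(2q)^{k-1}-1]}{2q-1}+n$ come from Eqs.~(\ref{iterated1A}) and~(\ref{iterated1B}), while $\tilde{\mathcal{K}}_{q,k-1}$ is supplied in closed form by Theorem~\ref{iteratedK*}. Substituting these and expanding the quadratic quantities $m_{q,k-1}^2$, $m_{q,k-1}n_{q,k-1}$ and $n_{q,k-1}^2$ collects the forcing into a combination of the geometric sequences $(8q)^k$, $(2q)^{2k}$, $(2q)^k$ and a constant, together with an extra $k\,(8q)^k$ contribution inherited from $\tilde{\mathcal{K}}_{q,k-1}$ in the resonant case $q=2$. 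Thus the recurrence takes the shape $\bar{\mathcal{K}}_{q,k}=4\bar{\mathcal{K}}_{q,k-1}+f(k)$ with $f(k)$ an explicit linear combination of these sequences.

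Third, I would solve it by the telescoping device already used in Theorem~\ref{iteratedKem}: setting $b_k=\bar{\mathcal{K}}_{q,k}/4^k$ turns the recurrence into $b_k-b_{k-1}=f(k)/4^k$, whence $b_k=b_0+\sum_{j=1}^k f(j)/4^{j}$. Each geometric piece $\rho^{j}$ in $f$ contributes $\sum_j(\rho/4)^{j}$, a finite geometric sum, while a piece of the form $j\rho^{j}$ (present only for $q=2$) contributes $\sum_j j(\rho/4)^{j}$ and hence a surviving linear-in-$k$ factor. The distinct growth rates arising are $8q$, $(2q)^2$, $2q$ and $1$; since $2q=4$ precisely when $q=2$, the case distinction in the statement is unavoidable, and it is also at $q=2$ that $8q=(2q)^2=16$ merge and the multiplicative index already carries a $k\,16^k$ term, which propagates linearly through $4q\tilde{\mathcal{K}}_{q,k-1}$ to produce the $k\,16^k$ term in the final formula.

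The main obstacle will be the bookkeeping rather than anything conceptual, and two points demand care. First, in the resonant case $q=2$ one must verify that the forcing component of rate exactly $4$ (coming from the $(2q)^k$ terms) has vanishing coefficient, for otherwise a spurious $k\,4^k$ contribution would appear; the compact $q=2$ formula, in which $k$ occurs only alongside $16^k$, relies on this cancellation. Second, for $q\neq2$ the denominators $q-2$, $2q-1$, $q+1$ and $q-1$ that emerge after summing the geometric series must be combined consistently, and one should note that the coefficient of the $(2q)^{2k}$ block is proportional to $q-1$, so the expression stays regular at $q=1$. Once $f(k)$ is assembled from Theorems~\ref{conK+} and~\ref{iteratedK*} together with Eqs.~(\ref{iterated1A}) and~(\ref{iterated1B}), and the geometric sums are evaluated term by term, both displayed identities follow by routine algebraic simplification.
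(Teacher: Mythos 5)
Your proposal is correct and follows essentially the same route as the paper: both apply the one-step recursion of Theorem~\ref{conK+} with the counts from Eqs.~(\ref{iterated1A}) and~(\ref{iterated1B}) and the closed form of Theorem~\ref{iteratedK*}, then divide by $4^k$ and telescope, summing geometric series term by term with the resonant $k\,16^k$ contribution at $q=2$. Your insistence on verifying that the rate-$4$ forcing coefficient vanishes at $q=2$ is precisely the cancellation that happens implicitly in the paper's second displayed equality for that case, so nothing is missing.
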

\begin{proof}
By Theorem~\ref{conK+} and Eqs.~(\ref{iterated1A}) and~(\ref{iterated1B}), we have
\begin{equation}\label{add}
\begin{aligned}
    \bar{\mathcal{K}}_{q,k} =&  4\bar{\mathcal{K}}_{q,k-1}+4q\tilde{\mathcal{K}}_{q,k-1}
                                +m_{q,k-1}q\big(3m_{q,k-1}q-2n_{q,k-1}+1\big)\\
                            &   -n_{q,k-1}\big(n_{q,k-1}-1\big)\\
                            =&  4\bar{\mathcal{K}}_{q,k-1}+4q\tilde{\mathcal{K}}_{q,k-1}
                                +(2q)^{2k-2}\frac{4m^2q^2(3q^2-4q+1)}{(2q-1)^2}\\
                            &   +(2q)^{k-1}\Big(\frac{4m^2q^3}{(2q-1)^2}
                                -\frac{2m(2n-1)q^2}{2q-1}\Big)\\
                            &   -\Big(n-\frac{mq}{2q-1}\Big)^2+n-\frac{mq}{2q-1}.
\end{aligned}
\end{equation}
For $q=2$, inserting Theorem~\ref{iteratedK*} into Eq.~(\ref{add}) yields
\begin{equation}
\begin{aligned}
    \bar{\mathcal{K}}_{2,k} =&  4\bar{\mathcal{K}}_{2,k-1}+8\tilde{\mathcal{K}}_{2,k-1}
                                +\frac{80m^2\cdot16^{k-1}}{9}                               +4^{k-1}\Big(\frac{32m^2}{9}-\frac{8m(2n-1)}{3}\Big)\\
                            &   -\frac{(2m-3n)(2m-3n+3)}{9}\\
                            =&  4\bar{\mathcal{K}}_{2,k-1}
                                +16^{k-1}\left(8\tilde{\mathcal{K}}_{2,0}
                                +\frac{8m\left((6k+8)m-6n+3\right)}{9}\right)\\
                            &   -\frac{(2m-3n)(2m-3n+3)}{9}.
\end{aligned}
\end{equation}
By dividing both sides by $4^k$, we could derive the result through simple calculations.

Analogously, if $q\neq2$, using Theorem~\ref{iteratedK*}, we rewrite Eq.~(\ref{add}) as
\begin{equation}
\begin{aligned}
    \Bar{\mathcal{K}}_{q,k}=&  4\bar{\mathcal{K}}_{q,k-1}+
                    (2q)^{2k-2}\frac{12m^2q^3(q-1)^2}{(q-2)(2q-1)^2}
                    +(8q)^{k-1}\Big(4q\tilde{\mathcal{K}}_{q,0}-\frac{8m^2q^2}{3(q-2)}\\
                 &  -\frac{4m(2n-1)q}{3}\Big)
                    -(2q)^{k-1}\Big(\frac{4m^2q^2(q-2)}{3(2q-1)^2}
                    -\frac{2m(2n-1)q(q-2)}{3(2q-1)}\Big)\\
                 &  -\Big(n-\frac{mq}{2q-1}\Big)^2+n-\frac{mq}{2q-1}.
\end{aligned}
\end{equation}
Once again, by dividing both sides by $4^k$, we obtain a geometric sequence, which is solved to yield the result.
\end{proof}

\subsubsection{Kirchhoff Index}

\begin{theorem}\label{iteratedK}
Let $G$ be a simple connected graph with $n$ nodes and $m$ edges. Then
\begin{enumerate}[(1)]
  \item if $q=2$,
    \begin{equation*}
    \begin{aligned}
        \mathcal{K}_{2,k}  =&   \mathcal{K}_{2,0}
                                +\frac{4^k-1}{3}\bar{\mathcal{K}}_{2,0}
                                +\frac{(4^k-1)^2}{9}\tilde{\mathcal{K}}_{2,0}
                                +\frac{16^k-1}{135}m\left((10k+14)m-10n+5\right)\\
                            &   +\frac{4^k-1}{81}\left(-16m^2+24mn-12m-9n(n-1)\right)\\
                            &   -\frac{k(2m-3n)(2m-3n+3)}{54};
    \end{aligned}
  \end{equation*}
  \item if $q\neq2$,
  \begin{equation*}
    \begin{aligned}
        \mathcal{K}_{q,k}=&  \Big(\frac{2}{q}\Big)^k\mathcal{K}_{q,0}
                        +\frac{q[4^k-(2/q)^k]}{2(2q-1)}\bar{\mathcal{K}}_{q,0}
                        +\frac{q^2\big[(8q)^{k}-2\cdot4^k+(2/q)^k\big]}{4(2q-1)^2}\tilde{\mathcal{K}}_{q,0}\\
                     &  +\frac{m^2q^3(q-1)\big[(2q)^{2k}-(2/q)^k\big]}{2(q-2)(q+1)(2q-1)^2}
                        -\frac{mq^2\big[(8q)^{k}-(2/q)^k\big]}{6(2q-1)^2}
                        \Big(\frac{mq}{q-2}+n-\frac{1}{2}\Big)\\
                     &  +\frac{mq^2\big[(2q)^{k}-(2/q)^k\big]}{3(q+1)(2q-1)}
                        \Big(-\frac{mq}{2q-1}+n-\frac{1}{2}\Big)
                        -\frac{q\big[4^k-(2/q)^k\big]}{2q-1}\\
                     &  \Bigg(\frac{m^2q^2(q-1)}{2(2q-1)^2(q+1)}
                        +\frac{1}{6}\Big(\frac{mq}{2q-1}-n\Big)\Big(\frac{mq}{2q-1}-n+1\Big)\Bigg)\\
                     &  +\frac{q\big[(2/q)^k-1\big]}{6(q-2)}
                        \Big(\frac{mq}{2q-1}-n\Big)\Big(\frac{mq}{2q-1}-n+1\Big).
    \end{aligned}
  \end{equation*}
\end{enumerate}
\end{theorem}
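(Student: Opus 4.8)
The plan is to turn Theorem~\ref{conK} into a recurrence in the iteration depth $k$ and solve that recurrence. Replacing $G$ by $S_{q,k-1}(G)$ in Theorem~\ref{conK} gives
\begin{equation*}
\mathcal{K}_{q,k}=\frac{2}{q}\mathcal{K}_{q,k-1}+\bar{\mathcal{K}}_{q,k-1}+\frac{q}{2}\tilde{\mathcal{K}}_{q,k-1}+\frac{m_{q,k-1}^2q^2-n_{q,k-1}\big(n_{q,k-1}-1\big)}{2}.
\end{equation*}
Every quantity on the right except $\mathcal{K}_{q,k-1}$ already has a closed form: the edge and node counts $m_{q,k-1}$ and $n_{q,k-1}$ come from Eqs.~\eqref{iterated1A} and~\eqref{iterated1B}, while $\bar{\mathcal{K}}_{q,k-1}$ and $\tilde{\mathcal{K}}_{q,k-1}$ are supplied by Theorems~\ref{iteratedK+} and~\ref{iteratedK*}. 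Substituting all of these makes the display a genuine first-order linear recurrence $\mathcal{K}_{q,k}=\tfrac{2}{q}\mathcal{K}_{q,k-1}+f(k)$, in which the inhomogeneous term $f(k)$ is an explicit linear combination of geometric sequences in the bases $4$, $8q$, $(2q)^2$, and $2q$, plus a constant (the $n_{q,k-1}(n_{q,k-1}-1)$ contribution expands into these same bases after squaring).

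Next I would solve this recurrence in the standard way by dividing both sides by the homogeneous factor $(2/q)^k$, which renders the left side telescoping. Summing from $1$ to $k$ then converts each contribution $c^{k-1}$ in $f$ into a geometric series whose ratio is $cq/2$, and multiplying back by $(2/q)^k$ restores the prefactor $(2/q)^k$ that heads the answer. For $q\neq2$ every such ratio $cq/2$ is different from $1$, so each series sums to a clean rational closed form; collecting the resulting $(2/q)^k$, $4^k$, $(8q)^k$, $(2q)^{2k}$, and $(2q)^k$ terms and simplifying should reproduce part~(2) of the statement.

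The case $q=2$ must be treated separately, because there the homogeneous multiplier $2/q$ equals $1$ and the recurrence telescopes additively as $\mathcal{K}_{2,k}=\mathcal{K}_{2,0}+\sum_{j=1}^{k}f(j)$; moreover several of the ratios $cq/2$ collapse to $1$. Those degenerate ratios are exactly what produce the explicit $k$-dependence in part~(1): the leading $16^k$ term of $\tilde{\mathcal{K}}_{2,k-1}$ yields, after summation of $\sum_j j\,16^{\,j}$-type expressions, the $\tfrac{16^k}{9}4m^2k$ contribution, and the constant part of $f$ summed as $\sum_j 1$ gives the term linear in $k$, namely $-\tfrac{k(2m-3n)(2m-3n+3)}{54}$. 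I expect the only real difficulty to be organizational rather than conceptual: carefully bookkeeping the numerous geometric contributions, tracking which ratios degenerate to $1$ when $q=2$, and simplifying the accumulated constants to match the stated forms. Once the recurrence is assembled correctly, the remainder is routine summation of geometric progressions.
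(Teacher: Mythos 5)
Your proposal is correct and follows essentially the same route as the paper: the authors likewise turn Theorem~\ref{conK} into the first-order recurrence of Eq.~\eqref{kir}, substitute the closed forms from Eqs.~\eqref{iterated1A}--\eqref{iterated1B} and Theorems~\ref{iteratedK*} and~\ref{iteratedK+}, and solve by normalizing with the homogeneous factor and summing geometric progressions, treating $q=2$ separately because the multiplier $2/q$ degenerates to $1$ (producing the explicit $k$-dependent terms). Your bookkeeping of which ratios collapse to $1$ at $q=2$ matches exactly what drives the $k\,16^k$ and linear-in-$k$ terms in part~(1) of the statement.
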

\begin{proof}
By Theorem~\ref{conK}, we have
\begin{equation}\label{kir}
\begin{aligned}
    \mathcal{K}_{q,k}  =&   \frac{2}{q}\mathcal{K}_{q,k-1}+\bar{\mathcal{K}}_{q,k-1}+\frac{q}{2}\tilde{\mathcal{K}}_{q,k-1}
                            +\frac{m_{q,k-1}^2 q^2-n_{q,k-1}(n_{q,k-1}-1)}{2}\\
                        =&  \frac{2}{q}\mathcal{K}_{q,k-1}+\bar{\mathcal{K}}_{q,k-1}+\frac{q}{2}\tilde{\mathcal{K}}_{q,k-1}
                            +(2q)^{2k}\frac{m^2q(q-1)}{2(2q-1)^2}\\
                        &   -\frac{(2q)^km}{4(2q-1)}\Big(2n-\frac{2mq}{2q-1}-1\Big)
                            -\frac{1}{2}\Big(n-\frac{mq}{2q-1}\Big)\Big(n-\frac{mq}{2q-1}-1\Big).
\end{aligned}
\end{equation}
We first consider the case $q=2$. Inserting Theorems~\ref{iteratedK*} and~\ref{iteratedK+} into Eq.~(\ref{kir}) yields
\begin{equation}
\begin{aligned}
    \mathcal{K}_{2,k}   =&  \mathcal{K}_{2,k-1}+\bar{\mathcal{K}}_{2,k-1}+\tilde{\mathcal{K}}_{2,k-1}
                            +\frac{16^k}{9} m^2
                            +\frac{4^k}{36}m(4m-6n+3)\\
                        &   -\frac{(2m-3n)(2m-3n+3)}{18}\\
                        =&  \mathcal{K}_{2,k-1}+4^{k-1}\bar{\mathcal{K}}_{2,0}
                            +\frac{5\cdot16^{k-1}-2\cdot4^{k-1}}{3}\tilde{\mathcal{K}}_{2,0}\\
                        &   +\frac{16^{k-1}}{9}m
                            \bigg((10k+14)m-10n+5\bigg)\\
                        &   +\frac{4^{k-1}}{27}\bigg(-16m^2+24mn-12m-9n(n-1)\bigg)\\
                        &   -\frac{(2m-3n)(2m-3n+3)}{54},
\end{aligned}
\end{equation}
which could lead to our result through simple calculations.

For the other case $q\neq2$, once again by Theorems~\ref{iteratedK*} and~\ref{iteratedK+}, Eq.~(\ref{kir}) is rewritten as
\begin{equation}
\begin{aligned}
    \mathcal{K}_{q,k}  =&  \frac{2}{q}\mathcal{K}_{q,k-1}
                    +(2q)^{2k}\frac{m^2(q-1)(2q^3-1)}{4(q-2)(q+1)(2q-1)^2}
                    +(8q)^k \Big(\frac{2q+1}{16(2q-1)}\tilde{\mathcal{K}}_{q,0}\\
                &   -\frac{m^2q(2q+1)}{24(q-2)(2q-1)}
                    -\frac{m(2n-1)(2q+1)}{48(2q-1)}\Big)\\
                &   +(2q)^k\Big(\frac{m(2n-1)(q-1)}{6(2q-1)}-\frac{m^2q(q-1)}{3(2q-1)^2}\Big)
                    +4^k\Big(\frac{\bar{\mathcal{K}}_{q,0}}{4}
                    -\frac{q\tilde{\mathcal{K}}_{q,0}}{4(2q-1)}\\
                &   -\frac{m^2q^2(q-1)}{4(2q-1)^2(q+1)}
                    -\frac{1}{12}(n-\frac{mq}{2q-1})(n-\frac{mq}{2q-1}-1)\Big)\\
                &   -\frac{1}{6}(n-\frac{mq}{2q-1})(n-\frac{mq}{2q-1}-1).
\end{aligned}
\end{equation}
Dividing both sides by $(\frac{q}{2})^k$, we obtain a  geometric sequence, which is summed to yield the result.

For $q=2$, we could derive the result similarly.
\end{proof}

Our results in this section generalize those previously obtained for subdivision graphs~\cite{Ya14,YaKl15}, but our computation method is much simpler.

\subsection{Applications to the Hierarchical Lattices}

The hierarchical lattices~\cite{Ya88} are a particular example of  iterated $q$-subdivision graphs. They are constructed in an iterative way. Let $H_{q,k}$, $q\geq2$ and $k\geq 0$, denote the hierarchical lattices after $k$ iterations. For $k=0$, $H_{q,0}$ is an edge connecting two nodes. For $k\geq1$, $H_{q,k}$ is obtained from $H_{q,k-1}$ by performing  the $q$-subdivision operation on $H_{q,k-1}$. Thus, the hierarchical lattices are actually iterated $q$-subdivision graphs  $S_{q,k}(G)$ when $G$ is a graph consisting of two nodes linked by an edge. They  have been recently introduced as a model of complex networks with  scale-free fractal properties~\cite{ZhZhZo07}.  Fig.~\ref{Flower22} illustrates a particular   hierarchical lattice $H_{2,5}$. Below, we present some properties of the hierarchical lattices, by using the results  derived in last subsections.

%%%%%%%%%%%%%%%%%%%%%%%%%%%%%%%%%%%%%%%%%%%%%%%%%%%%%%%%
%Figure
%%%%%%%%%%%%%%%%%%%%%%%%%%%%%%%%%%%%%%%%%%%%%%%%%%%%%%%%%
\begin{figure}
\begin{center}
\includegraphics[width=7.0cm]{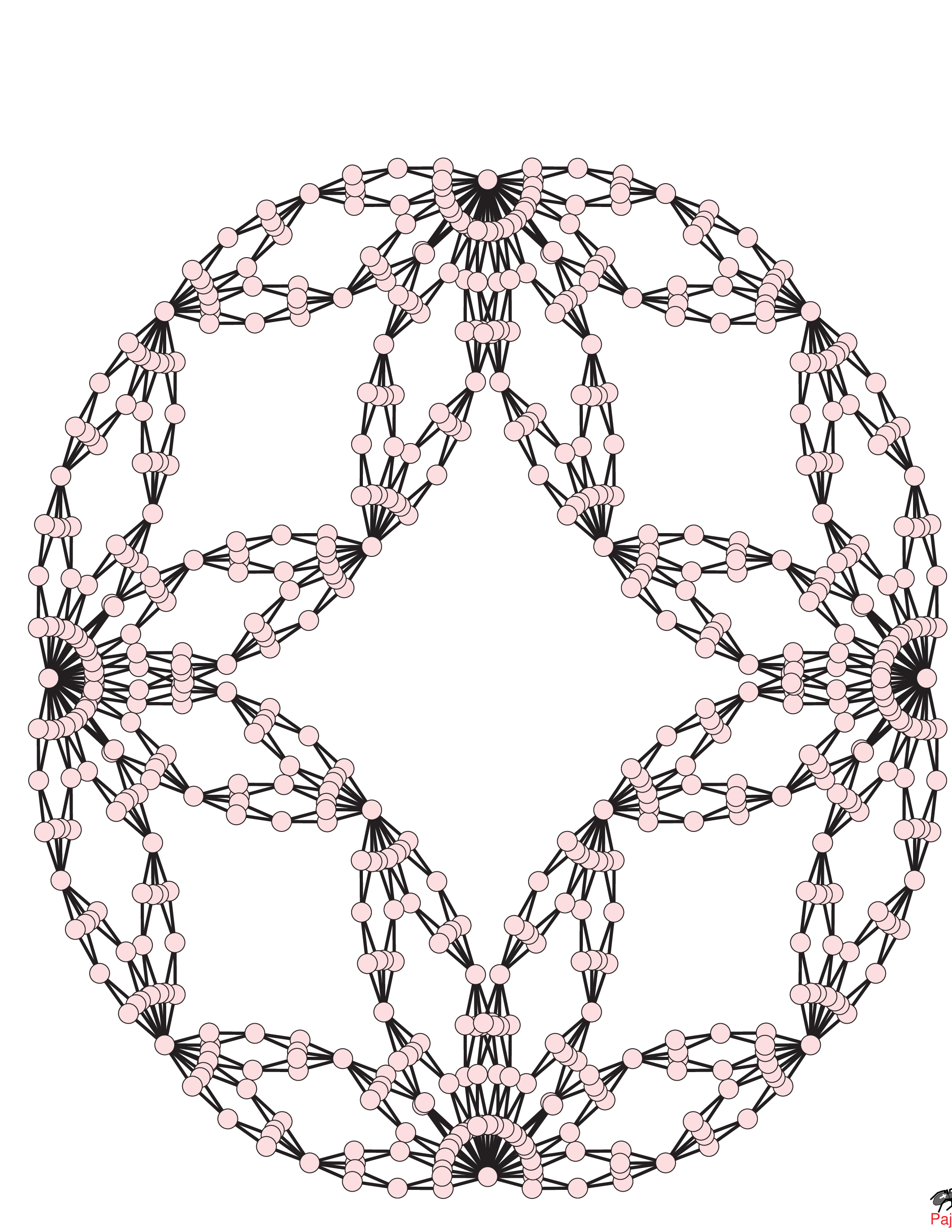}
\caption{The hierarchical lattice $H_{2,5}$.} \label{Flower22}
\end{center}
\end{figure}
%%%%%%%%%%%%%%%%%%%%%%%%%%%%%%%%%%%%%%%%%%%%%%%%%%%%%%%%%

%The hierarchical lattices \cite{yang1988family} are constructed in an iterative manner. We denote the hierarchical lattices(networks) after $k$ generations by $H_{q,k}$, $q\geq2$ and $k\geq0$. The networks are constructed as follows: for $k=0$, $H_{q,0}$ is an edge connecting two points. For $k\geq1$, $H_{q,k}$ is obtained from $H_{q,k}$ by the $q$-th subdivision operation. In this section, we present some properties of the hierarchical lattices, which could be derived from our work directly.

For $H_{q,0}$, its adjacency matrix and normalized  adjacency matrix  are both
$\left(
          \begin{array}{cc}
            0 & 1 \\
            1 & 0 \\
          \end{array}
        \right)$. Their eigenvalues  are $\lambda_1=1$ and $\lambda_2=-1$, with corresponding orthonormal eigenvectors being
 $\left(\frac{1}{\sqrt{2}}, \frac{1}{\sqrt{2}}\right)$ and     $\left(\frac{1}{\sqrt{2}}, -\frac{1}{\sqrt{2}}\right)$.
In addition, for $H_{q,0}$ the Kemeny constant, multiplicative degree-Kirchhoff index, additive degree-Kirchhoff index, and Kirchhoff index are  $K(H_{q,0})=\frac{1}{2}$, $\tilde{\mathcal{K}}(H_{q,0})=1$,  $\bar{\mathcal{K}}(H_{q,0})=2$, and $ \mathcal{K}(H_{q,0})=1$, respectively. Then, by Theorems~\ref{iteratedKem},~\ref{iteratedK*},~\ref{iteratedK+}, and~\ref{iteratedK}, we obtain the following exact solutions to the Kemeny constant $K(H_{q,k})$, multiplicative degree-Kirchhoff index $\tilde{\mathcal{K}}(H_{q,k})$, additive degree-Kirchhoff index $\bar{\mathcal{K}}(H_{q,k}) $, and Kirchhoff index $\mathcal{K}(H_{q,k})$ for  $H_{q,k}$.
\begin{equation}\label{egKem}
    K(H_{q,k}) =
    \left\{
        \begin{array}{ll}
            \frac{(6k+4)4^k+5}{18}, & \hbox{if $q=2$;} \\
            \frac{q(q-1)(2q)^k}{(q-2)(2q-1)}-\frac{q\cdot4^k}{3(q-2)}+\frac{4q-3}{6(2q-1)},
            & \hbox{if $q\neq2$.}
        \end{array}
    \right.
\end{equation}
\begin{equation}\label{egK*}
    \tilde{\mathcal{K}}(H_{q,k}) =
    \left\{
      \begin{array}{ll}
        \frac{(6k+4)16^{k}+5\cdot4^k}{9},
        & \hbox{if $q=2$;} \\
        \frac{2q(q-1)(2q)^{2k}}{(q-2)(2q-1)}-\frac{2q(8q)^k}{3(q-2)}
        +\frac{(4q-3)(2q)^k}{3(2q-1)},
        & \hbox{if $q\neq2$.}
      \end{array}
    \right.
\end{equation}
\begin{equation}\label{K+}
    \bar{\mathcal{K}}(H_{q,k}) =
    \left\{
      \begin{array}{ll}
        \frac{4(k+1)16^k}{9}+\frac{38\cdot4^k}{27}+\frac{4}{27},
        & \hbox{if $q=2$;} \\
        \frac{3q^3(q-1)(2q)^{2k}}{(q-2)(q+1)(2q-1)^2}
        -\frac{2q^2(8q)^k}{3(q-2)(2q-1)}
        +\frac{q(4q-3)(2q)^k}{3(2q-1)^2}\\
        +\frac{2(3q^2+2q-2)4^k}{3(q+1)(2q-1)}
        +\frac{(q-1)(3q-2)}{3(2q-1)^2},
        & \hbox{if $q\neq2$.}
      \end{array}
    \right.
\end{equation}
\begin{equation}\label{egK}
    \mathcal{K}(H_{q,k}) =
    \left\{
      \begin{array}{ll}
        \frac{2(5k+7)16^k}{135}+\frac{38\cdot4^k}{81}
        +\frac{173-60k}{405},
        & \hbox{if $q=2$;} \\
        \frac{q^3(q-1)(2q)^{2k}}{2(q-2)(q+1)(2q-1)^2}
        -\frac{q^3(8q)^k}{6(q-2)(2q-1)^2}
        +\frac{q^2(4q-3)(2q)^k}{6(q+1)(2q-1)^2}\\
        +\frac{q(3q^2+2q-2)4^k}{3(q+1)(2q-1)^2}
        +\frac{5q^4-9q^3-5q^2+12q-4}{2(q-2)(q+1)(2q-1)^2}\Big(\frac{2}{q}\Big)^k
        -\frac{(q-1)q(3q-2)}{6(q-2)(2q-1)^2},
        & \hbox{if $q\neq2$.}
      \end{array}
    \right.
\end{equation}

We note that Eq.~\eqref{egKem} is in complete agreement with the result obtained in \cite{ZhShHuCh12}.

\section{Conclusions}

The $q$-subdivision operation is an extension of traditional subdivision operation on a graph, which has been applied to construct complex networks. In this paper, we studied various properties of $q$-subdivision graph $S_q(G)$ of a simple connected graph $G$, and expressed some quantities of $S_q(G)$ in terms of associated with $G$. We first derived formulas for eigenvalues and eigenvectors of normalized adjacency matrix for $S_q(G)$. We then determined two-node hitting time and resistance distance for any pair of nodes in $S_q(G)$, using the connection between eigenvalues and eigenvectors of normalized adjacency matrix and hitting time and resistance distance. Moreover, we obtained the Kemeny constant, Kirchhoff index, multiplicative degree-Kirchhoff index, and additive degree-Kirchhoff index for $S_q(G)$. Finally, we derived explicit formulas for some interesting quantities of iterated $q$-subdivisions for any graph $G$, using which we obtained closed-form expressions for those corresponding quantities of the scale-free fractal hierarchical lattices.

It deserves to mention that our computation method and process also apply to other graph operations, such as $q$-triangulation. For a  graph $G$, its $q$-triangulation is a obtained from $G$:  For each edge $e$ in $G$ we create $q$ new nodes,  and  connect them to both end nodes of  $e$. The  $q$-triangulation is a generalization of traditional triangulation operation~\cite{XiZhCo16b}, which has been used to generate scale-free small-world networks~\cite{ZhRoZh07}.

\section*{Acknowledgements}

This work was supported by the National Natural Science Foundation of China under Grant No. 11275049.

%\appendix

%\section{Proof of  Lemma \ref{afore1}}\label{app-1}

%\section{Proof of  Lemma \ref{Afore2}}\label{app-2}

\section*{References}

\bibliographystyle{model1-num-names}
\bibliography{Subdivision}

\end{document}